\documentclass[12pt,a4paper,twoside]{amsart}
\pdfoutput=1
\usepackage[english]{babel}
\usepackage{lmodern}
\usepackage{microtype}
\usepackage{mathtools}
\usepackage{esint}
\usepackage{comment}
\usepackage[shortlabels]{enumitem}
\usepackage{hyperref}
\usepackage{amssymb}

\title[Inclusions for conductivity equations]{Superconductive and insulating inclusions for linear and non-linear conductivity equations}
\author{Tommi Brander}
\address{University of Jyv\"askyl\"a, Department of Mathematics and Statistics.}
\email{tommi.o.brander@jyu.fi}
\author{Joonas Ilmavirta}
\address{University of Jyv\"askyl\"a, Department of Mathematics and Statistics.}
\email{joonas.ilmavirta@jyu.fi}
\author{Manas Kar}
\address{University of Jyv\"askyl\"a, Department of Mathematics and Statistics.}
\email{manas.m.kar@jyu.fi}

\newcommand*{\R}{\mathbb{R}}

\newcommand*{\N}{\mathbb{N}}

\newcommand*{\eps}{\varepsilon}
\newcommand*{\doo}{\partial}
\newcommand*{\ol}[1]{\overline{#1}}
\newcommand{\sulut}[1]{\left( #1 \right)}
\newcommand{\joukko}[1]{\left\{ #1 \right\}}
\newcommand{\abs}[1]{\left\lvert #1 \right\rvert}
\newcommand{\aabs}[1]{\left\| #1 \right\|}
\newcommand{\norm}[1]{\left\| #1 \right\|}
\newcommand{\der}{\mathrm{d}}

\newcommand{\Der}[1]{\frac{\der}{\der #1}}
\newcommand{\ip}[2]{\left\langle#1,#2\right\rangle}

\newcommand{\sphere}{\mathbb{S}}

\theoremstyle{plain}
\newtheorem{theorem}{Theorem}
\newtheorem{lemma}[theorem]{Lemma}
\newtheorem{corollary}[theorem]{Corollary}
\newtheorem{proposition}[theorem]{Proposition}

\theoremstyle{definition}
\newtheorem{definition}[theorem]{Definition}

\theoremstyle{remark}
\newtheorem{remark}[theorem]{Remark}

\DeclareMathOperator{\dive}{div}

\DeclareMathOperator{\dist}{dist}

\DeclareMathOperator{\Tr}{Tr}
\DeclareMathOperator{\expo}{exp}

\DeclareMathOperator{\diam}{diam}

%
%
%
\def\vint_#1{\mathchoice%
          {\mathop{\kern 0.2em\vrule width 0.6em height 0.69678ex
depth -0.58065ex
                  \kern -0.8em \intop}\nolimits_{\kern -0.4em#1}}%
          {\mathop{\kern 0.1em\vrule width 0.5em height 0.69678ex
depth -0.60387ex
                  \kern -0.6em \intop}\nolimits_{#1}}%
          {\mathop{\kern 0.1em\vrule width 0.5em height 0.69678ex
              depth -0.60387ex
                  \kern -0.6em \intop}\nolimits_{#1}}%
          {\mathop{\kern 0.1em\vrule width 0.5em height 0.69678ex
depth -0.60387ex
                  \kern -0.6em \intop}\nolimits_{#1}}}
\def\vintslides_#1{\mathchoice%
          {\mathop{\kern 0.1em\vrule width 0.5em height 0.697ex depth -0.581ex
                  \kern -0.6em \intop}\nolimits_{\kern -0.4em#1}}%
          {\mathop{\kern 0.1em\vrule width 0.3em height 0.697ex depth -0.604ex
                  \kern -0.4em \intop}\nolimits_{#1}}%
          {\mathop{\kern 0.1em\vrule width 0.3em height 0.697ex depth -0.604ex
                  \kern -0.4em \intop}\nolimits_{#1}}%
          {\mathop{\kern 0.1em\vrule width 0.3em height 0.697ex depth -0.604ex
                  \kern -0.4em \intop}\nolimits_{#1}}}

\newcommand{\aveint}[2]{\mathchoice%
          {\mathop{\kern 0.2em\vrule width 0.6em height 0.69678ex
depth -0.58065ex
                  \kern -0.8em \intop}\nolimits_{\kern -0.45em#1}^{#2}}%
          {\mathop{\kern 0.1em\vrule width 0.5em height 0.69678ex
depth -0.60387ex
                  \kern -0.6em \intop}\nolimits_{#1}^{#2}}%
          {\mathop{\kern 0.1em\vrule width 0.5em height 0.69678ex
depth -0.60387ex
                  \kern -0.6em \intop}\nolimits_{#1}^{#2}}%
          {\mathop{\kern 0.1em\vrule width 0.5em height 0.69678ex
depth -0.60387ex
                  \kern -0.6em \intop}\nolimits_{#1}^{#2}}}

\hyphenation{an-iso-trop-ic}
\numberwithin{theorem}{section}
\numberwithin{equation}{section}

\begin{document}

\begin{abstract}
We detect an inclusion with infinite conductivity from boundary measurements represented by the Dirichlet-to-Neumann map for the conductivity equation.
We use both the enclosure method and the probe method.
We use the enclosure method to prove partial results when the underlying equation is the quasilinear $p$-Laplace equation.
Further, we rigorously treat the forward problem for the partial differential equation $\operatorname{div}(\sigma\lvert\nabla u\rvert^{p-2}\nabla u)=0$ where the measurable conductivity $\sigma\colon\Omega\to[0,\infty]$ is zero or infinity in large sets and $1<p<\infty$.
\end{abstract}


\subjclass[2010]{Primary 35R30, 35J92; Secondary 35H99}
\keywords{$p$-harmonic functions, Calder\'on problem, inclusion, inverse boundary value problem, enclosure method, probe method}

\maketitle


\section{Introduction}
\label{sec:intro}

We study inverse boundary value problems for the partial differential equation
\begin{equation}
\label{eq:pde}
\dive(\sigma(x)\abs{\nabla u(x)}^{p-2}\nabla u(x))=0,
\end{equation}
where the measurable coefficient $\sigma\geq0$ is allowed to take the values~$0$ and~$\infty$ in large sets and the exponent is in the range $1<p<\infty$.
This includes the case $p=2$ where our PDE becomes the linear conductivity equation $\dive(\sigma\nabla u)=0$ which appears in Calder\'on's inverse problem~\cite{Calderon:1980}.

To arrive at the PDE from a physical starting point, we consider an electric potential (voltage) $u\colon\Omega \to\R$, where $\Omega \subset \R^n$.
By Ohm's law current density is given by $J(x)=-\sigma(x)\nabla u(x)$ and Kirchhoff's law entails $\dive(J(x))=0$.
To arrive at the non-linear equation~\eqref{eq:pde} instead of $\dive(\sigma\nabla u)=0$, we replace Ohm's law with the non-linear law $J(x)=-\sigma(x)\abs{\nabla u(x)}^{p-2}\nabla u(x)$.
Physically such non-linear laws can occur in dielectrics~\cite{Bueno:Longo:Varela:2008,Garroni:Kohn:2003,Garroni:Nesi:Ponsiglione:2001,Kohn:Levy:1998,Talbot:Willis:1994:a,Talbot:Willis:1994:b}, 
plastic moulding~\cite{Aronsson:1996,King:Richardson:2015}, 
electro-rheological and thermo-rheological fluids~\cite{Antontsev:Rodrigues:2006,Berselli:Diening:Ruzicka:2010,Ruzicka:2000}, 
viscous flows in glaciology~\cite{Glowinski:Rappaz:2003} and plasticity phenomena~\cite{Atkinson:Champion:1984,Idiart:2008,PonteCastaneda:Suquet:1998,PonteCastaneda:Willis:1985,Suquet:1993}.

Kirchhoff's law retains its linear form and consequently our PDE is of divergence form.
This is convenient for the study of weak solutions and calculus of variations.
Weak solutions of~\eqref{eq:pde} in a domain $\Omega\subset\R^n$ are minimizers of the energy functional
\begin{equation}
\label{eq:energy}
E(u)
=
\int_\Omega\sigma(x)\abs{\nabla u(x)}^p\der x
\end{equation}
in the space~$W^{1,p}(\Omega)$ with some prescribed boundary values.
This is true even if~$\sigma$ takes the values~$0$ and~$\infty$ in non-empty open sets.
This is our main result for the direct problem, and the details will be discussed in sections~\ref{sec:direct-intro} and~\ref{sec:direct}.

Our PDE can be classified as a quasilinear elliptic equation.
We do not, however, bound the coefficient~$\sigma$ away from zero or infinity, so ellipticity holds in a weaker sense than usual.
If $\sigma\equiv1$, the solutions of~\eqref{eq:pde} are known as $p$-harmonic functions.
They have been studied extensively (see for instance~\cite{Ladyzhenskaya:Ural'tseva:1968,Gilbarg:Trudinger:1983,Heinonen:Kilpelainen:Martio:1993,Lindqvist:2006} and the references therein), but inverse problems for elliptic equations of this type have received considerably less attention.

We assume our potential to be real-valued since this is physically most relevant.
For complex-valued functions~$u$ one can obtain essentially the same results with the same tools, but we restrict our attention to the real case (apart from section~\ref{sec:p2-enclosure}).

Our goal is, given Dirichlet and Neumann boundary values of all solutions of~\eqref{eq:pde}, to reconstruct the shape of an unknown obstacle having zero or infinite conductivity.
This data is encoded in the so-called Dirichlet-to-Neumann map which we will describe in section~\ref{sec:direct-intro} and in more detail in section~\ref{sec:dn-map}.
When $p\neq2$, there are very few results in this direction.
When $p=2$, this is a version of Calder\'on's famous inverse boundary value problem.
We will summarize earlier results in section~\ref{sec:old-results} and new results proven in this article in section~\ref{sec:new-results}.

\subsection{The direct problem}
\label{sec:direct-intro}

Before embarking on a study of inverse problems, it is good to show that the direct problem is well-posed.
The well-posedness result we present is, to the best of our knowledge, new in its generality.

Let~$\Omega$ be a bounded domain and $\sigma\colon\Omega\to[0,\infty]$ a measurable function.
For simplicity, we make the standing assumption that the sets $D_0=\sigma^{-1}(0)$ and $D_\infty=\sigma^{-1}(\infty)$ are open and the three sets~$\partial\Omega$,~$\bar D_0$ and~$\bar D_\infty$ are disjoint.
We also assume that outside the sets~$D_0$ and~$D_\infty$ the function~$\sigma$ is bounded away from both zero and infinity.

In this setting we look for minimizers of the energy~\eqref{eq:energy} in~$W^{1,p}(\Omega)$, given boundary values $f\in W^{1,p}(\Omega)/W^{1,p}_0(\Omega)$.
Minimization of this energy corresponds to solving an Euler--Lagrange equation.

Usually such problems are posed on the domain $\Omega\setminus(\bar D_0\cup\bar D_\infty)$ and one would impose suitable boundary conditions on~$\partial D_0$ and~$\partial D_\infty$.
We prefer to work with all of~$\Omega$ when possible, for in the inverse problem the function~$\sigma$ and therefore the sets~$D_0$ and~$D_\infty$ are unknown.
See remark~\ref{rmk:pde-bdy} for the usual formulation.

\begin{theorem}
\label{thm:direct-summary}
Let $\Omega\subset\R^n$, $n\geq1$, be a bounded open set.
Let $\sigma\colon\Omega\to[0,\infty]$ be a measurable function, and denote $D_0=\sigma^{-1}(0)$ and $D_\infty=\sigma^{-1}(\infty)$.
Assume that both~$D_0$ and~$D_\infty$ are open and Lipschitz, and the three sets~$\partial\Omega$, $\bar D_0$ and~$\bar D_\infty$ are disjoint.
Assume furthermore that~$\sigma$ is bounded away from zero and infinity outside the sets~$D_0$ and~$D_\infty$.

Let $p\in(1,\infty)$.
Fix any boundary value $f\in W^{1,p}(\Omega)/W^{1,p}_0(\Omega)$.
The energy~\eqref{eq:energy} has a minimizer $u\in W^{1,p}(\Omega)$ with boundary values $u|_{\partial\Omega}=f$.
The minimizer is unique up to functions that have vanishing gradient outside~$D_0$ and that vanish on~$\doo \Omega$.

A function $u\in W^{1,p}(\Omega)$ is such a minimizer if and only if it satisfies the Euler--Lagrange equation $\dive(\sigma\abs{\nabla u}^{p-2}\nabla u)=0$ weakly in the sense that
\begin{equation}
\int_\Omega\sigma\abs{\nabla u}^{p-2}\nabla u\cdot\nabla\phi \der x
= 0
\end{equation}
for all $\phi\in W^{1,p}_0(\Omega)$ with $\nabla \phi = 0$ in $D_\infty$.
\end{theorem}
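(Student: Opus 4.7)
My plan is to establish existence by the direct method, uniqueness by strict convexity, and the Euler--Lagrange equivalence by a standard perturbation calculation. Throughout, I work with the natural admissible class
\[
\mathcal{A}_f = \joukko{ u \in W^{1,p}(\Omega) : u - f \in W^{1,p}_0(\Omega),\ \nabla u = 0 \text{ a.e.\ in } D_\infty },
\]
which is the largest subset of $W^{1,p}(\Omega)$ on which the boundary condition holds and $E$ can be finite. It is nonempty: take any $W^{1,p}(\Omega)$ representative of $f$ and, using cutoffs near the Lipschitz boundary $\doo D_\infty$, modify it componentwise in $D_\infty$ to be locally constant there, yielding a competitor with finite energy.

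For existence, I would apply the direct method to a minimizing sequence $(u_k) \subset \mathcal{A}_f$. The crux is coercivity, since $\sigma$ vanishes on $D_0$ and so $E(u_k)$ does not control $\nabla u_k$ there. I proceed in two steps. First, on $\Omega \setminus \ol{D_0}$ the gradient vanishes on $D_\infty$ while $\sigma \ge c > 0$ on the rest, so $E(u_k)$ bounds $\norm{\nabla u_k}_{L^p(\Omega \setminus \ol{D_0})}$; Poincaré applied to $u_k - f$, which has zero trace on $\doo\Omega$, then bounds $\norm{u_k}_{W^{1,p}(\Omega \setminus \ol{D_0})}$. Second, since $\doo D_0$ is Lipschitz one can extend the trace of $u_k$ on $\doo D_0$ to a $W^{1,p}(D_0)$-function whose norm is controlled by the trace, and replace $u_k|_{D_0}$ by that extension. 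The replacement preserves the energy (as $\sigma = 0$ on $D_0$), the boundary data, and the $D_\infty$ constraint, while the modified sequence $\tilde u_k$ is now bounded in $W^{1,p}(\Omega)$. Extract $\tilde u_k \rightharpoonup u$ weakly in $W^{1,p}(\Omega)$; since the constraint $\nabla u = 0$ on $D_\infty$ defines a closed subspace, $u \in \mathcal{A}_f$, and weak lower semicontinuity of the convex integrand on $\Omega \setminus D_\infty$ yields $E(u) \le \liminf E(\tilde u_k)$, so $u$ is a minimizer.

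Uniqueness up to the stated equivalence follows from strict convexity of $\xi \mapsto \abs{\xi}^p$ on $\R^n$ for $p > 1$: averaging two minimizers $u,v$ and comparing with the minimal energy forces $\nabla u = \nabla v$ wherever $\sigma > 0$, i.e.\ on $\Omega \setminus D_0$, while $u - v \in W^{1,p}_0(\Omega)$. For the Euler--Lagrange equation, any admissible test function $\phi \in W^{1,p}_0(\Omega)$ with $\nabla \phi = 0$ in $D_\infty$ keeps $u + t\phi$ in $\mathcal{A}_f$ for all $t \in \R$; differentiating $t \mapsto E(u + t\phi)$ at zero gives the weak equation, the interchange of derivative and integral being routine since the integrand is supported on $\Omega \setminus D_\infty$, where $\sigma$ is bounded and Hölder's inequality supplies an integrable majorant from the finiteness of $E(u)$. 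Conversely, convexity of $E$ together with this identity gives $E(u + \phi) \ge E(u)$ for every admissible $\phi$, so any solution of the weak equation in $\mathcal{A}_f$ is a minimizer.

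The main obstacle is coercivity at $D_0$; the Lipschitz hypothesis on $D_0$ enters precisely to provide the extension operator that circumvents the degeneracy. On the $D_\infty$ side everything is softer, since $\nabla u = 0$ there is a closed linear condition and Lipschitz regularity is needed only for the initial construction of an admissible competitor.
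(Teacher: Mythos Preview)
Your approach is essentially the same as the paper's: direct method with coercivity obtained by extending into $D_0$ via the trace on the Lipschitz boundary $\partial D_0$, strict convexity for uniqueness, and the standard perturbation/convexity argument for the Euler--Lagrange equivalence. The paper happens to choose the $p$-harmonic extension into $D_0$ rather than a generic Sobolev extension operator, but this is cosmetic. One small oversight: your Poincar\'e step on $\Omega\setminus\bar D_0$ tacitly assumes every connected component meets $\partial\Omega$; the paper handles components $U$ with $\bar U\cap\partial\Omega=\emptyset$ by normalizing the minimizing sequence to vanish there (such shifts lie in the stated ambiguity class anyway), after which your argument goes through unchanged.
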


For a proof, see theorems~\ref{thm:direct-variation} and~\ref{thm:direct-pde} and their proofs.

This theorem is true, in particular, in the linear case $p=2$.
Then the PDE is $\dive(\sigma\nabla u)=0$.

We point out that even though the minimizer~$u$ of the energy is not unique, the vector-valued function $\sigma\abs{\nabla u}^{p-2}\nabla u$ is unique.
Since the exceptional sets~$D_0$ and~$D_\infty$ do not reach the boundary, the boundary values can be interpreted in the Sobolev sense as usual.

The information about boundary values of solutions is encoded in the Dirichlet-to-Neumann map (DN map)~$\Lambda_\sigma$ from the quotient space $W^{1,p}(\Omega)/W^{1,p}_0(\Omega)$ to its dual.
Let $f,g\in W^{1,p}(\Omega)/W^{1,p}_0(\Omega)$ be any functions and let $\bar f,\bar g\in W^{1,p}(\Omega)$ be their extensions so that $\dive(\sigma\abs{\nabla\bar f}^{p-2}\nabla\bar f)=0$, while the extension~$\bar g$ satisfies~$\nabla \bar g = 0$ in $D_\infty$.
We define~$\Lambda_\sigma$ so that
\begin{equation}\label{eq:dn-map}
\ip{\Lambda_\sigma f}{g}
=
\int_\Omega\sigma\abs{\nabla\bar f}^{p-2}\nabla\bar f\cdot\nabla\bar g.
\end{equation}
The Dirichlet-to-Neumann map is linear if and only if $p=2$.

\subsection{Known results}
\label{sec:old-results}

Let us first fix $p=2$.
If~$\sigma$ is assumed to be bounded away from zero and infinity, the DN map~$\Lambda_\sigma$ determines~$\sigma$ in two dimensions~\cite{Astala:Paivarinta:2006}.
In higher dimensions this is true if~$\sigma$ is additionally assumed to be Lipschitz~\cite{Caro:Rogers:2016}.
For more information about Calder\'on's problem, we refer to~\cite{Calderon:1980,Uhlmann:2009}.
Astala, Lassas and P\"aiv\"arinta~\cite{Astala:Lassas:Paivarinta:2016} have investigated anisotropic conductivities that are not bounded from above or away from zero.

A different kind of problem is to reconstruct an inclusion --- a subdomain of~$\Omega$ with zero, infinite or non-zero finite conductivity in a constant background conductivity --- from the DN map.
Problems of this kind are our object of study.

There are several methods for recovering inclusions with zero or finite conductivity by making boundary measurements of both voltages and currents~\cite{Isakov:1988,Colton:Kirsch:1996,Cakoni:Colton:2006,Kirsch:Grinberg:2008,Harrach:2013,Ikehata:1998,Potthast:2005,Ikehata:1999:jan,Nakamura:Uhlmann:Wang:2005,Harrach:Ullrich:2013}.
For regions of infinite conductivity a recent article by Ramdani and Munnier~\cite{Munnier:Ramdani:2017} shows how to detect infinitely conductive bodies from the Dirichlet to Neumann map in two dimensional domain for the linear conductivity equation.
Their method is based on geometry in the complex plane, in particular Riemann mappings.
Br\"uhl~\cite[section 4.3.1]{Bruhl:1999} and Schmitt~\cite[section 2.2.2]{Schmitt:2010} have used the factorization method to detect perfectly conducting inclusions.
Alessandrini and Valenzuela~\cite{Alessandrini:Valenzuala:1996} detect perfectly conducting or insulating cracks with two bondary measurements.
Friedman and Vogelius~\cite{Friedman:Vogelius:1989} have shown that one can recover the location and scale of a finite number of small inclusions with zero or infinite conductivity in an inhomogeneous background from the DN map.
Superconductive but grounded inclusions have been detected in, for example,~\cite{Kress:Rundell:2005,Borman:Inghman:Johansson:Lesnic:2009}.
Moradifam, Nachman and Tamasan~\cite{Moradifam:Nachman:Tamasan:2012} consider a single interior measurement for conductivity equation and detect insulating or perfectly conducting inclusions, though their approach to the direct problem is not variational.
Perfectly conducting inclusions in the context of the Maxwell equations have been detected by sampling methods~\cite{Gebauer:Hank:Kirsch:Muniz:Schneider:2005,Cakoni:Fares:Haddar:2006}.

Other results relating to infinitely conducting obstacles concern the situation of two such obstacles being close to each other and the main concern is the blow-up of the solutions~\cite{Kang:Lim:Yun:2013,Gorb:Novikov:2012}.

For other values of~$p$ much less is known.
We assume $1<p<\infty$ throughout this article.
The $p$-conductivity equation~\eqref{eq:pde} with infinite conductivity has been considered by Gorb and Novikov~\cite{Gorb:Novikov:2012} for $2 \leq p \in \N$ in dimensions two and three.
We are not aware of a rigorous treatment of the forward problem in the literature, so we provide one in section~\ref{sec:direct} as summarized above in section~\ref{sec:direct-intro}.

The $p$-Calder\'on's problem, or Calder\'on's problem related to the $p$-conductivity equation, was introduced by Salo and Zhong~\cite{Salo:Zhong:2012}.
They recover the conductivity on the boundary of the domain.
Brander~\cite{Brander:2016:jan} improved the result to first order derivative of conductivity on the boundary, but with increased regularity assumptions.
A recent result by Brander, Kar and Salo~\cite{Brander:Kar:Salo:2015} shows that one can detect the convex hull of an inclusion with conductivity bounded away from zero and infinity. Further results related to the enclosure method and the monotonicity method for the $p$-Laplace equation can be found in \cite{Brander:Harrach:Kar:Salo:2017}.
Very recently Guo, Kar and Salo~\cite{Guo:Kar:Salo:2016} proved that under a monotonicity assumption the DN~map is injective for Lipschitz conductivites when~$n=2$ for general~$p$, and when~$n \geq 3$ when one of the conductivities is almost constant.
Their results assume the conductivity to be bounded from above and away from zero.

\subsection{New results}
\label{sec:new-results}

First, we study the problem of recovering the set~$D_\infty$ from the DN map when $p=2$, $D_0=\emptyset$ and $\sigma\equiv1$ outside~$D_\infty$.
Here we mainly follow the probe and enclosure methods of Ikehata~\cite{Ikehata:1998, Ikehata:1999:jan} to reconstruct the unknown inclusion. 

\begin{theorem}
\label{thm:p2-inverse}
Let $\Omega\subset\R^n$, $n\geq2$, be a bounded~$C^1$ domain and let~$D$ be a Lipschitz domain compactly contained in~$\Omega$.
Assume furthermore that each connected component of $\bar\Omega\setminus D$ meets~$\partial\Omega$.
Let
\begin{equation}
\sigma(x)
=
\begin{cases}
1 & \text{ when }x\notin D\\
\infty & \text{ when }x\in D.
\end{cases}
\end{equation}
If $p=2$, then the Dirichlet-to-Neumann map determines the set~$D$.
\end{theorem}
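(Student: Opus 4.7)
The plan is to apply Ikehata's probe method, with Theorem~\ref{thm:direct-summary} supplying the variational framework for the perfectly conducting forward problem. Suppose two admissible inclusions $D,D'\Subset\Omega$ produce the same Dirichlet-to-Neumann map; the goal is to show $D=D'$.

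First I would derive an Alessandrini-type identity. For $f\in H^{1/2}(\doo\Omega)$ let $u$ be the minimizer from Theorem~\ref{thm:direct-summary} for the inclusion $D$ and let $h$ be the $\sigma\equiv 1$ harmonic extension of $f$. Expanding $\int_\Omega\abs{\nabla u}^2-\int_\Omega\abs{\nabla h}^2$ as $\int_\Omega\abs{\nabla(u-h)}^2+2\int_\Omega\nabla h\cdot\nabla(u-h)$ and integrating by parts on $\Omega\setminus\bar D$ (using that $h$ is harmonic on all of $\Omega$, $\nabla u\equiv 0$ on $D$, $u$ is constant $c_i$ on each component $\doo D_i$, and $\int_{\doo D_i}\doo_\nu h\,dS=\int_{D_i}\Delta h\,dx=0$), one finds that the cross term vanishes and obtains
\begin{equation*}
\ip{(\Lambda_\sigma-\Lambda_1)f}{f} \;=\; \int_D\abs{\nabla h}^2\,dx \;+\; \int_{\Omega\setminus\bar D}\abs{\nabla(u-h)}^2\,dx,
\end{equation*}
a sum of two non-negative terms that localises the DN gap to the test function's Dirichlet energy on the inclusion.

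For the probe step, fix a point $y\in\Omega$ and a Jordan arc $c\colon[0,1]\to\bar\Omega$ with $c(0)\in\doo\Omega$, $c((0,1])\subset\Omega$, $c(1)=y$. For each $t\in[0,1)$ with $c([0,t])\cap(\bar D\cup\bar D')=\emptyset$, Runge approximation for harmonic functions in $\Omega\setminus c([0,t])$ yields a sequence $h_n^{c(t)}$ of harmonic functions on $\Omega$ converging to the Green's function $G(\cdot,c(t))$ in $H^1_{\mathrm{loc}}(\Omega\setminus c([0,t]))$. Define Ikehata's indicator
\begin{equation*}
I(c(t)) \;=\; \limsup_{n\to\infty}\ip{(\Lambda_\sigma-\Lambda_1)(h_n^{c(t)}|_{\doo\Omega})}{h_n^{c(t)}|_{\doo\Omega}},
\end{equation*}
which depends only on $\Lambda_\sigma$ and thus takes the same value for $D$ and $D'$. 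From the identity above, $I(c(t))$ remains finite for $c(t)\notin\bar D$ (since $\int_D\abs{\nabla G(\cdot,c(t))}^2\,dx<\infty$ and the perfectly conducting minimizer is stable in boundary data), while it diverges as $c(t)\to\doo D$ from outside, because $\int_D\abs{\nabla G(\cdot,z)}^2\,dx$ grows like $\dist(z,D)^{2-n}$ (or $\abs{\log\dist(z,D)}$ when $n=2$). Since the same dichotomy must hold for $D'$, and the standing hypothesis that every component of $\bar\Omega\setminus D$ meets $\doo\Omega$ allows needles to reach every point of $\doo D$ from outside, this forces $\doo D=\doo D'$ and hence $D=D'$.

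The main obstacle is verifying the upper bound $I(c(t))<\infty$ for $c(t)\notin\bar D$: one must show that the correction $\int_{\Omega\setminus\bar D}\abs{\nabla(u_n-h_n^{c(t)})}^2\,dx$ stays bounded as $n\to\infty$, where $u_n$ is the perfectly conducting minimizer with boundary data $h_n^{c(t)}|_{\doo\Omega}$. The natural route is to pass to an $H^1$-limit of $u_n$ using the energy identity and the uniform $H^{1/2}$-boundedness of the boundary data away from $\bar D$, and to invoke stability of the forward problem under these approximations. A complementary enclosure-method calculation with the exponential harmonic functions $e^{\tau(x\cdot\rho+i\,x\cdot\rho^\perp)}$ recovers the support function and hence the convex hull of $D$, providing a parallel sanity check to the pointwise probe reconstruction.
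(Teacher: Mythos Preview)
Your identity
\[
\ip{(\Lambda_D-\Lambda_\emptyset)f}{f}=\int_D\abs{\nabla h}^2+\int_{\Omega\setminus\bar D}\abs{\nabla(u-h)}^2
\]
is correct and is exactly what the paper obtains in the course of proving Lemma~\ref{linLem1} (see the computation of~$I$ in~\eqref{low1}). The lower bound and the blow-up as $c(t)\to\partial D$ follow from it as you say, and the overall probe-method skeleton matches the paper's.

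The genuine gap is the upper bound $I(c(t))<\infty$ for $c(t)\notin\bar D$. Your proposed fix---``uniform $H^{1/2}$-boundedness of the boundary data'' and a stability argument for the forward map---cannot work as stated: the Runge approximants $h_n^{c(t)}$ \emph{must} blow up in $H^{1/2}(\partial\Omega)$ (equivalently in $H^1(\Omega)$), since their $H^1_{\mathrm{loc}}$ limit is the fundamental solution, which is not in $H^1(\Omega)$. Hence Lemma~\ref{lma:H1-estimates}-type continuity in the boundary datum gives no control on $\int_{\Omega\setminus\bar D}\abs{\nabla(u_n-h_n)}^2$. What is needed is a bound of the form $\ip{(\Lambda_D-\Lambda_\emptyset)f}{f}\le C\aabs{h}_{H^1(D')}^2$ for some neighbourhood $D'\Supset D$ disjoint from the needle, so that only the \emph{local} $H^1$ convergence of $h_n$ near $D$ is used. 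The paper obtains this through a substantial layer-potential analysis: the reflected solution $w=u-h$ is written as a sum of single-layer potentials on $\partial D$ and $\partial\Omega$, and the resulting boundary integral equations (together with the contraction property of the interaction operators $K_{\partial\Omega}^{\partial D}$, $K_{\partial D}^{\partial\Omega}$) yield Proposition~\ref{proENclo}, namely $\bigl|\int_{\partial D}(\partial_\nu w)\,h\bigr|\le C\aabs{h}_{H^1(D)}^2$. Combined with inequality~\eqref{eq:estimate-i} this gives the upper bound in Proposition~\ref{prop:p2-indicator-estimates}. A more elementary route you could take instead is a competitor argument: compare $u$ with $(1-\eta)h+\eta\bar c$ for a cutoff $\eta$ supported in a collar $D'\setminus D$ and a suitable constant $\bar c$, which directly yields $\ip{(\Lambda_D-\Lambda_\emptyset)f}{f}\le C\int_{D'\setminus\bar D}\abs{\nabla h}^2$ (at least when $D$ is connected). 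Either way, this localisation step is the heart of the proof and is missing from your proposal.
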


In the notation introduced for the direct problem, $D=D_\infty$.
We will prove this theorem using the probe method.
When we use the enclosure method instead, we can prove recoverability of the convex hull of~$D_\infty$ if $D_\infty\subset\R^n$ is connected.
For a precise statement of this result, see proposition~\ref{prop:p2-enclosure}.
For more details and proofs, see section~\ref{sec:linear}.

For general $p\in(1,\infty)$ we only use the enclosure method.
The Runge approximation property for the $p$-Laplace equation is not known, so the probe method is not applicable for this non-linear model. 
We assume that one of~$D_0$ and~$D_\infty$ is empty and the other domain (called~$D$) has Lipschitz boundary.
We are unable to detect the convex hull of~$D$, but we can recover a larger set, giving an estimate from above for the inclusion.
We can also determine whether $D=\emptyset$ or not.
See corollary~\ref{cor:general-p} and the preceeding discussion for more details.

If we allow~$\sigma$ to take the values~$0$ or~$\infty$ in large sets, the DN map~$\Lambda_\sigma$ no longer determines~$\sigma$ uniquely.
For example, consider the domain $\Omega=B(0,3)\subset\R^n$ and a function $\sigma\colon\Omega\to[0,\infty]$ which is zero or infinity on $B(0,2)\setminus\bar B(0,1)$.
Then the values of~$\sigma$ in~$B(0,1)$ have no effect on the DN map.
We can only ever hope to recover~$\sigma$ up to the ``outermost boundaries of~$D_0$ and~$D_\infty$'' and whether or not these boundaries belong to~$D_0$ or~$D_\infty$.

\subsection*{Acknowledgements}
We would like to thank professor Eric Bonnetier for letting us know of the paper of Kang, Lim and Yun~\cite{Kang:Lim:Yun:2013} and thereby the paper of Gorb and Novikov~\cite{Gorb:Novikov:2012}.
We would also like to thank professor Mikko Salo for several discussions and the anonymous referees for their insightful comments and suggestions.
Part of the work was done during a visit to Institut Henri Poincar\'e, Paris, with financial support from the institute.
J.I.\ and M.K.\ were partially supported by an ERC Starting Grant (grant agreement no 307023).
T.B.\ was partially supported by the Academy of Finland through the Finnish Centre of Excellence in Inverse Problems Research.

\section{Detecting perfectly conducting inclusions for $p=2$}
\label{sec:linear}

In this section we consider the linear case $p=2$.
Our problem is to detect an inclusion of superconductive material within a homogeneous background medium.
We justify two types of reconstruction methods: the probe method and the enclosure method.
These methods were proposed by Ikehata.
In~\cite{Ikehata:1999:jan}, he introduced the enclosure method where he first used the CGO solutions with linear phase to detect the convex hull of the obstacle with finite or zero conductivity.
Regarding the probe method~\cite{Ikehata:1998}, he proposed to use the fundamental solution as a test function to detect the unknown inclusions of zero and finite conductivity for the conductivity equation.
Since then a lot of work has been done in this direction for various linear PDEs, see~\cite{Ikehata:1999:jan,Ikehata:2010} for an overview.
We mention some examples: the Helmholtz model~\cite{Nagayasu:Uhlmann:Wang:2011,Nakamura:Yoshida:2007}, Maxwell systems~\cite{Kar:Sini:2014:jan,Zhou:2010} and the linear elasticity equations~\cite{Kar:Sini:2014,Nakamura:Uhlmann:Wang:2005}.

In this article we apply these two methods to the linear conductivity model to detect the perfectly conductive obstacles and this is the first result which concerns the perfectly conductive case.
In the zero conductivity case, estimates for the difference of the DN maps do not involve any boundary integrals~\cite[Lemma 4.1]{Ikehata:1999:jan}.
However, due to the presence of the boundary integral, see equation~\eqref{eq:estimate-i}, it is more difficult to obtain the right kind of estimate in our situation.
We overcome the technical difficulties by using the boundary layer potential theory for the Laplace operator.
The rigorous analysis is in subsection~\ref{boundryEstimaTE}.
  
Let us repeat the setting of theorem~\ref{thm:p2-inverse}.
Let $\Omega \subset \R^n$, $n \geq 2$, be a bounded domain with~$C^1$ boundary and $D\subset\Omega$ be an open Lipschitz domain so that all components of~$\Omega\setminus {D}$ meet~$\partial\Omega$.
If~$\partial\Omega$ is connected, this amounts to requiring that~$\Omega\setminus {D}$ is connected.
We consider the obstacle problem
\begin{equation}
\label{linear_1}
\begin{cases}
\Delta u = 0 \text{ in } \Omega \setminus \bar{D} \\
u = \text{constant in each component of } D \\
\int_{\doo D} \frac{\doo u}{\doo\nu} \der S = 0  \\
u = f \text{ on } \doo \Omega,
\end{cases}
\end{equation}
where~$\nu$ is the outward unit normal to~$\Omega \setminus \bar{D}$.
The above problem formulates the situation where~$D$ is superconductive.
To see why this formulation corresponds to $\sigma\equiv\infty$ in~$D$, how to formulate the problem in~$\Omega$ instead of~$\Omega\setminus\bar D$ and how to pose the problem in the weak form, see section~\ref{sec:direct} and especially remark~\ref{rmk:pde-bdy}.
Notice that the constant value of~$u$ in the connected components depends on~$f$ and can be different in different components.

We index the Dirichlet to Neumann map by the domain~$D$ instead of the function~$\sigma$.
The DN map $\Lambda_D \colon H^{1/2}(\doo\Omega) \to H^{-1/2}(\doo\Omega)$ is defined by $\Lambda_D(f) = \partial_\nu u|_{\doo\Omega}$.
A weak version of~$\Lambda_D$ is given by
\begin{equation}
\ip{\Lambda_D f}{g} = \int_{\Omega\setminus\bar{D}} \nabla u \cdot \nabla\phi \der x
\end{equation}
where $g\in H^{1/2}(\doo\Omega)$, $\phi\in H^1(\Omega)$ with $\phi|_{\doo\Omega} = g$, $\nabla \phi = 0$ in $D$, and $u$ satisfies~\eqref{linear_1}.

We define similarly a DN map in the case $D=\emptyset$; this is just the DN map for the Laplacian on~$\Omega$.
We call it the free DN map and denote it by~$\Lambda_\emptyset$.
That is, $\Lambda_\emptyset(f) = \partial_\nu u_0|_{\doo\Omega}$ where~$u_0$ solves
\begin{equation}\label{linear_2}
\begin{cases}
\Delta u_0 = 0 \ \text{in}\ \Omega \\
u_0 = f \ \text{on}\ \doo\Omega
\end{cases}
\end{equation}
in the weak sense.

Our goal is to reconstruct the shape of the superconductive inclusion~$D$ from the knowledge of the Dirichlet-to-Neumann map~$\Lambda_D$ measured at~$\doo\Omega$.

Before presenting the enclosure and the probe method, we would like to state the following inequalities.
\subsection{Integral inequalities}
Let $w=u-u_0$ be the reflected solution satisfying
\begin{equation}\label{refl_lin}
\begin{cases}
\Delta w = 0 \ \text{in}\ \Omega\setminus\ol{D} \\
w+u_0\text{ is constant in each component of }D \\
\int_{\doo D}\frac{\doo w}{\doo\nu} \der S = -\int_{\doo D}\frac{\doo u_0}{\doo\nu} \der S \\
w= 0 \ \text{on}\ \partial \Omega
\end{cases}
\end{equation}
where~$u$ and~$u_0$ are the solutions of~\eqref{linear_1} and~\eqref{linear_2} respectively.

\begin{lemma}\label{linLem1}
For any $f\in H^{1/2}(\doo\Omega)$, we have the inequalities
\begin{equation}
\label{eq:estimate-i}
\begin{split}
\langle (\Lambda_D-\Lambda_{\emptyset})f, f\rangle
&\leq
\int_{D}\abs{\nabla u_0}^2\der x - 2 \int_{\doo D}\frac{\doo w}{\doo\nu} \ u_0 \ \der S
\end{split}
\end{equation}
and
\begin{equation}
\label{eq:estimate-ii}
\int_{D}\abs{\nabla u_0}^2\der x  \leq \langle (\Lambda_D-\Lambda_{\emptyset})f, f\rangle.
\end{equation}
\end{lemma}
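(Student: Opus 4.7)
The strategy is to compute $\ip{(\Lambda_D - \Lambda_\emptyset) f}{f}$ directly from the variational form of the DN map and the decomposition $u = u_0 + w$ on $\Omega \setminus \ol{D}$, reducing everything to boundary integrals on $\doo D$ via Green's identity. By the weak formulation, $\ip{\Lambda_D f}{f} = \int_{\Omega \setminus \ol{D}} \abs{\nabla u}^2 \der x$ and $\ip{\Lambda_\emptyset f}{f} = \int_\Omega \abs{\nabla u_0}^2 \der x$, so expanding $\abs{\nabla u}^2 = \abs{\nabla u_0}^2 + 2\nabla u_0 \cdot \nabla w + \abs{\nabla w}^2$ in $\Omega \setminus \ol{D}$ yields
\begin{equation*}
\ip{(\Lambda_D - \Lambda_\emptyset) f}{f} = -\int_D \abs{\nabla u_0}^2 \der x + 2 \int_{\Omega \setminus \ol{D}} \nabla u_0 \cdot \nabla w \der x + \int_{\Omega \setminus \ol{D}} \abs{\nabla w}^2 \der x.
\end{equation*}

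Next I would handle the cross term. Since $\Delta u_0 = 0$ in $\Omega$ and $w|_{\doo\Omega} = 0$, Green's identity gives $\int_{\Omega \setminus \ol{D}} \nabla u_0 \cdot \nabla w \der x = \int_{\doo D} w\, \doo_\nu u_0 \der S$, with $\nu$ pointing into $D$ as in~\eqref{linear_1}. On each connected component $\doo D_i$, write $w = c_i - u_0$ where $c_i$ is the constant value of $u$ on $D_i$. The contribution $c_i \int_{\doo D_i} \doo_\nu u_0 \der S$ vanishes because $u_0$ is harmonic throughout $D_i$, so the cross term reduces to $-\int_{\doo D} u_0\, \doo_\nu u_0 \der S$, which by Green's identity on $D$ equals $\int_D \abs{\nabla u_0}^2 \der x$. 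Substituting yields the key identity
\begin{equation*}
\ip{(\Lambda_D - \Lambda_\emptyset) f}{f} = \int_D \abs{\nabla u_0}^2 \der x + \int_{\Omega \setminus \ol{D}} \abs{\nabla w}^2 \der x,
\end{equation*}
which already implies~\eqref{eq:estimate-ii} since the last term is non-negative.

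For~\eqref{eq:estimate-i} I would apply Green's identity once more to rewrite $\int_{\Omega \setminus \ol{D}} \abs{\nabla w}^2 \der x = \int_{\doo D} w\, \doo_\nu w \der S$, and again splitting $w = c_i - u_0$ on each $\doo D_i$ and using the component-wise flux condition $\int_{\doo D_i} \doo_\nu w \der S = 0$, I would obtain $\int_{\Omega \setminus \ol{D}} \abs{\nabla w}^2 \der x = -\int_{\doo D} u_0\, \doo_\nu w \der S$. Combined with the previous identity, this shows that $\int_{\doo D} u_0\, \doo_\nu w \der S$ is non-positive, so replacing it by twice itself in the identity only enlarges the right-hand side and yields~\eqref{eq:estimate-i}. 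The main subtlety is keeping track of the direction of $\nu$ (outward from $\Omega \setminus \ol{D}$, hence inward to $D$) and noticing that the flux condition in~\eqref{linear_1} must hold separately on each connected component of $\doo D$ — a fact that comes from the weak formulation through testing against functions that take independent constants on each component of $D_\infty$, and that is precisely what makes the $c_i$-terms vanish in both Green-identity computations.
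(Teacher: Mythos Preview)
Your proof is correct. For~\eqref{eq:estimate-ii} it coincides with the paper's argument: both arrive at the exact identity
\[
\langle(\Lambda_D-\Lambda_\emptyset)f,f\rangle=\int_D|\nabla u_0|^2\,\der x+\int_{\Omega\setminus\bar D}|\nabla w|^2\,\der x,
\]
the paper writing the last term as $-I$. For~\eqref{eq:estimate-i} the routes differ slightly. The paper combines the elementary inequality $|\eta|^2-|\zeta|^2\geq 2\zeta\cdot(\eta-\zeta)$ with the identity $\langle(\Lambda_D-\Lambda_\emptyset)f,f\rangle=\int_{\Omega\setminus\bar D}\nabla w\cdot\nabla u_0-\int_{\partial D}(\partial_\nu w)\,u_0$ (obtained from $\Delta w=0$ and integration by parts), and never invokes the flux condition on $\partial D$. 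You instead push to the further exact equality $\int_{\Omega\setminus\bar D}|\nabla w|^2=-\int_{\partial D}u_0\,\partial_\nu w$, which requires the \emph{component-wise} vanishing $\int_{\partial D_i}\partial_\nu u=0$; this is stronger than what is literally written in~\eqref{linear_1} but, as you correctly note, follows from the weak formulation (cf.\ the paper's remark on the PDE with boundary conditions). Your organization is tighter---a single chain of identities yields both inequalities and in fact the equality $\langle(\Lambda_D-\Lambda_\emptyset)f,f\rangle=\int_D|\nabla u_0|^2-\int_{\partial D}u_0\,\partial_\nu w$---while the paper's argument for~\eqref{eq:estimate-i} has the minor advantage of not relying on the component-wise flux condition.
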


\begin{proof}
\textbf{Proof of~\eqref{eq:estimate-i}:} Note that, 
\begin{equation}
\langle \Lambda_D f, g \rangle = \int_{\Omega\setminus\ol{D}} \nabla u \cdot \nabla\phi \der x,
\end{equation}
where $g\in H^{1/2}(\doo\Omega), \phi\in H^1(\Omega)$ with $\phi|_{\doo\Omega} = g$, $\nabla \phi = 0$ in $D$, and~$u$ satisfies~\eqref{linear_1}.
Since $u=u_0=f$ on~$\doo\Omega$, and by taking $\phi =u$, the left hand side of the above integral identity becomes
\begin{equation}
\ip{\Lambda_D f}{f}
= \int_{\Omega\setminus\ol{D}} \abs{\nabla u}^2\der x.
\end{equation}
Similarly, the free DN map can be written as
\begin{equation}
\ip{\Lambda_{\emptyset} f}{g} = \int_{\Omega} \nabla u_0 \cdot \nabla \phi \der x,
\end{equation}
where $g\in H^{1/2}(\doo\Omega)$ and $\phi \in H^1(\Omega)$ with $g=\phi|_{\doo\Omega}$ and~$u_0$ satisfies~\eqref{linear_2}.
Since $u=u_0=f$ on~$\doo\Omega$, replacing $\phi=u$ or $\phi=u_0$ in the above inequality gives us
\begin{equation}
\label{free}
\ip{\Lambda_{\emptyset} f}{f}
=\int_{\Omega}\abs{\nabla u_0}^2\der x
= \int_{\Omega}\nabla u_0 \cdot \nabla u \der x
\end{equation}
Therefore,
\begin{equation}
\begin{split}
&
\langle (\Lambda_D - \Lambda_{\emptyset}) f, f \rangle
\\\quad
&=
\int_{\Omega\setminus\ol{D}} \abs{\nabla u}^2\der x - \int_{\Omega}\abs{\nabla u_0}^2\der x
\\\quad
& = \left(\int_{\Omega\setminus\ol{D}} \abs{\nabla u}^2 \der x - \int_{\Omega\setminus\ol{D}} \abs{\nabla u_0}^2 \der x\right)
- \int_D \abs{\nabla u_0}^2 \der x.
\end{split}
\end{equation}
Using the inequality
\begin{equation}
\abs{\eta}^2-\abs{\zeta}^2 \geq 2\zeta\cdot(\eta-\zeta) \text{ for }\eta, \zeta \in \R^n
\end{equation}
we have
\begin{equation}\label{ineA}
\begin{split}
\langle (\Lambda_D - \Lambda_{\emptyset}) f, f \rangle
&\geq
2\int_{\Omega\setminus\ol{D}}\nabla u_0\cdot\nabla(u-u_0)\der x - \int_{D}\abs{\nabla u_0}^2\der x.
\end{split}
\end{equation}
Recall that, $w=u-u_0$ satisfies~\eqref{refl_lin}.
So, multiplying by~$u_0$ on both sides of $\Delta w =0$ and integrating by parts we obtain
\begin{equation}
-\int_{\Omega\setminus\ol{D}} \nabla w \cdot\nabla u_0 + \int_{\doo\Omega \cup \doo D}\frac{\doo w}{\doo\nu} \ u_0 \ \der S =0,
\end{equation}
that is,
\begin{equation}\label{ineB}
\begin{split}
\ip{(\Lambda_D - \Lambda_{\emptyset}) f}{f}
&=
\int_{\partial\Omega}\frac{\doo w}{\doo\nu}u_0\der S
\\&=
\int_{\Omega\setminus\ol{D}} \nabla w \cdot\nabla u_0 -  \int_{\doo D}\frac{\doo w}{\doo\nu} \ u_0 \ \der S.
\end{split}
\end{equation}
Subtracting~\eqref{ineA} from~\eqref{ineB} multiplied by two, we obtain
\begin{equation}
\begin{split}
\langle (\Lambda_D - \Lambda_{\emptyset}) f, f \rangle
&\leq
\int_{D}\abs{\nabla u_0}^2 \der x - 2 \int_{\doo D}\frac{\doo w}{\doo\nu} \ u_0 \ \der S.
\end{split}
\end{equation}

\textbf{Proof of~\eqref{eq:estimate-ii}:}
We observe that
\begin{equation}
\label{a}
\begin{split}
&\langle (\Lambda_D - \Lambda_{\emptyset}) f, f \rangle \\
& = \int_{\Omega\setminus\ol{D}} \abs{\nabla u}^2 \der x - \int_{\Omega} \abs{\nabla u_0}^2 \der x \\
& = \int_D \abs{\nabla u_0}^2 \der x - I ,
\end{split}
\end{equation}
where
\begin{equation}
\label{low1}
\begin{split}
I
&=
2\int_D \abs{\nabla u_0}^2 \der x + \int_{\Omega\setminus\ol{D}} \abs{\nabla u_0}^2 \der x - \int_{\Omega\setminus\ol{D}} \abs{\nabla u}^2 \der x
\\
& = 2\int_{\Omega} \abs{\nabla u_0}^2 \der x - \int_{\Omega\setminus\bar{D}} \abs{\nabla u}^2 \der x + \int_{\Omega\setminus\ol{D}} \abs{\nabla u_0}^2 \der x
\\&\quad- 2\int_{\Omega\setminus\ol{D}} \abs{\nabla u_0}^2 \der x \\
& = 2\int_{\Omega}\nabla u_0 \cdot \nabla u \der x - \int_{\Omega\setminus\bar{D}} \abs{\nabla u}^2\der x - \int_{\Omega\setminus\ol{D}} \abs{\nabla u_0}^2 \der x \\
& = 2\int_{\Omega\setminus\ol{D}}\nabla u_0 \cdot \nabla u \der x - \int_{\Omega\setminus\ol{D}} \abs{\nabla u}^2\der x - \int_{\Omega\setminus\ol{D}} \abs{\nabla u_0}^2 \der x
\\
&=
-\int_{\Omega\setminus\bar D}
\abs{\nabla u-\nabla u_0}^2\der x
.
\end{split}
\end{equation}
The third identity follows from equation~\eqref{free} and the second last equality is due to the fact that $\nabla u = 0$ in~$D$.
Since $I\leq0$, the required inequality now follows from~\eqref{a}.
\end{proof}

\begin{lemma}
\label{lma:H1-estimates}
There is a constant~$C$ independent of~$f$ so that
\begin{equation}
\label{eq:u-H1}
\aabs{u}_{H^1(\Omega)}
\leq
C\aabs{f}_{H^{1/2}(\partial\Omega)}
\end{equation}
and
\begin{equation}
\label{eq:w-H1}
\aabs{w}_{H^1(\Omega)}
\leq
C\aabs{f}_{H^{1/2}(\partial\Omega)}.
\end{equation}
\end{lemma}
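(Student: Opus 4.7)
The plan is to establish \eqref{eq:u-H1} by a variational comparison argument and then obtain \eqref{eq:w-H1} by exploiting the fact that $w$ has zero boundary trace. As a first ingredient I would invoke the standard bound $\aabs{u_0}_{H^1(\Omega)}\leq C\aabs{f}_{H^{1/2}(\partial\Omega)}$ for the Dirichlet problem~\eqref{linear_2} for the Laplacian; this is routine elliptic theory, obtained from a Sobolev extension of $f$ and the Dirichlet minimization principle.

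For the bound on $u$ I would use Theorem~\ref{thm:direct-summary} applied with $p=2$ and $\sigma=\chi_{\Omega\setminus D}+\infty\cdot\chi_D$: the solution $u$ minimizes $\int_{\Omega\setminus\bar D}\abs{\nabla v}^2\der x$ over the admissible class $\mathcal{A}:=\joukko{v\in H^1(\Omega):v|_{\partial\Omega}=f,\ \nabla v=0 \text{ a.e.\ in } D}$. It therefore suffices to produce a single element $v\in\mathcal{A}$ with $\aabs{v}_{H^1(\Omega)}\leq C\aabs{f}_{H^{1/2}(\partial\Omega)}$. To this end, I would take a Sobolev extension $F\in H^1(\Omega)$ of $f$ with $\aabs{F}_{H^1(\Omega)}\leq C\aabs{f}_{H^{1/2}(\partial\Omega)}$, together with a cutoff $\eta\in C_c^\infty(\Omega)$ that equals $1$ on a neighborhood of $\bar D$; such an $\eta$ exists because $\bar D\Subset\Omega$. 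Then $v:=(1-\eta)F$ belongs to $\mathcal{A}$, vanishes identically on a neighborhood of $D$, and satisfies $\aabs{v}_{H^1(\Omega)}\leq C\aabs{f}_{H^{1/2}(\partial\Omega)}$. Combined with $\nabla u=0$ in $D$, minimality gives $\aabs{\nabla u}_{L^2(\Omega)}=\aabs{\nabla u}_{L^2(\Omega\setminus\bar D)}\leq\aabs{\nabla v}_{L^2(\Omega\setminus\bar D)}\leq C\aabs{f}_{H^{1/2}(\partial\Omega)}$.

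Next, $w=u-u_0$ satisfies $\aabs{\nabla w}_{L^2(\Omega)}\leq\aabs{\nabla u}_{L^2(\Omega)}+\aabs{\nabla u_0}_{L^2(\Omega)}\leq C\aabs{f}_{H^{1/2}(\partial\Omega)}$, and $w|_{\partial\Omega}=0$ by~\eqref{refl_lin}. The Poincar\'e inequality on the bounded Lipschitz domain $\Omega$ then gives $\aabs{w}_{L^2(\Omega)}\leq C\aabs{\nabla w}_{L^2(\Omega)}$, which yields \eqref{eq:w-H1}. The remaining estimate \eqref{eq:u-H1} follows at once from the triangle inequality $\aabs{u}_{H^1(\Omega)}\leq\aabs{w}_{H^1(\Omega)}+\aabs{u_0}_{H^1(\Omega)}$.

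The only step that needs any genuine thought is the production of the admissible comparison function $v$, and I expect this to be the main (but mild) obstacle. It is rendered harmless by the standing hypothesis that $\bar D$ is compactly contained in $\Omega$, which allows a single smooth cutoff supported inside $\Omega$ to simultaneously trivialize $\nabla v$ on $D$ and leave the boundary trace unchanged. Everything else is standard Hilbert-space machinery together with the variational principle already furnished by Theorem~\ref{thm:direct-summary}.
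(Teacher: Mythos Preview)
Your argument is correct and follows the same overall architecture as the paper: control $\aabs{\nabla u}_{L^2(\Omega)}$ first, then deduce \eqref{eq:w-H1} via the triangle inequality and the Poincar\'e inequality (using $w|_{\partial\Omega}=0$), and finally recover \eqref{eq:u-H1} from $u=w+u_0$ and the classical bound on $u_0$. The one genuine difference is how the key estimate $\aabs{\nabla u}_{L^2(\Omega)}\leq C\aabs{f}_{H^{1/2}(\partial\Omega)}$ is obtained. The paper integrates by parts, using $\Delta u=0$ in $\Omega\setminus\bar D$ and $\nabla u=0$ in $D$ together with the boundary conditions in \eqref{linear_1}, to write $\aabs{\nabla u}_{L^2(\Omega)}^2=\int_{\partial\Omega}f\,\partial_\nu u$, and then bounds this pairing by $C\aabs{f}_{H^{1/2}(\partial\Omega)}\aabs{\nabla u}_{L^2(\Omega)}$. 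You instead invoke the variational characterization of $u$ from Theorem~\ref{thm:direct-summary} and compare against the explicit admissible competitor $v=(1-\eta)F$. Your route is slightly more self-contained in that it avoids interpreting $\partial_\nu u|_{\partial\Omega}$ in $H^{-1/2}$ and never sees the boundary term on $\partial D$; the paper's route is more PDE-flavoured and makes the link to the DN map explicit. Both are short and equally valid here.
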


\begin{proof}
In this proof~$C$ will always denote a universal constant independent of~$f$, but the constant may be different in different estimates.

Since $w=u-u_0$, the triangle inequality gives
\begin{equation}
\aabs{u}_{H^1(\Omega)}
\leq
\aabs{u_0}_{H^1(\Omega)}+\aabs{w}_{H^1(\Omega)}.
\end{equation}
The estimate $\aabs{u_0}_{H^1(\Omega)}\leq C\aabs{f}_{H^{1/2}(\partial\Omega)}$ is a classical continuity estimate for the solution operator of the Laplacian, so~\eqref{eq:w-H1} implies~\eqref{eq:u-H1}.

Since~$w$ has zero boundary values in the Sobolev sense, it suffices to establish
\begin{equation}
\label{eq:w-L2}
\aabs{\nabla w}_{L^2(\Omega)}
\leq
C\aabs{f}_{H^{1/2}(\partial\Omega)}.
\end{equation}
Using the triangle inequality and the classical estimate for~$\aabs{u_0}_{H^1(\Omega)}$ again, we find that it suffices to prove
\begin{equation}
\label{eq:u-L2}
\aabs{\nabla u}_{L^2(\Omega)}
\leq
C\aabs{f}_{H^{1/2}(\partial\Omega)}.
\end{equation}
We can establish~\eqref{eq:u-L2} using the fact that~$u$ solves~\eqref{linear_1}.
We have $\Delta u=0$ in~$\Omega\setminus\bar D$ and $\nabla u=0$ in~$D$, so
\begin{equation}
\begin{split}
\aabs{\nabla u}_{L^2(\Omega)}^2
&=
\int_{\Omega\setminus\bar D}\abs{\nabla u}^2
=
\int_{\partial\Omega}f\partial_\nu u.
\end{split}
\end{equation}
The boundary term on~$\partial D$ and the interior term in~$\Omega\setminus\bar D$ vanish due to~\eqref{linear_1}.
Now estimating
\begin{equation}
\int_{\partial\Omega}f\partial_\nu u
\leq
C\aabs{f}_{H^{1/2}(\partial\Omega)}\aabs{\nabla u}_{L^2(\Omega)}
\end{equation}
finishes the proof of~\eqref{eq:u-L2} and therefore also those of~\eqref{eq:w-L2}, \eqref{eq:w-H1} and~\eqref{eq:u-H1}.
\end{proof}

The following lemma is a consequence of the previous one.
We do not need it, but we record it here for completeness.

\begin{lemma}
\label{lma:continuous-constant}
Let the connected components of~$D$ be $D_1,\dots,D_K$.
For each $k\in\{1,\dots,K\}$ the function that sends the boundary data~$f$ to the constant value~$c^f_k$ the solution attains on~$D_k$ is linear and continuous $H^{1/2}(\partial\Omega)\to\R$.
\end{lemma}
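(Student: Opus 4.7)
The plan is to reduce both claims to elementary consequences of the linearity of the boundary value problem \eqref{linear_1} together with the a priori $H^1$ estimate just established in Lemma \ref{lma:H1-estimates}. The fact that the constant $c_k^f$ is well-defined follows from the theorem on the direct problem (Theorem \ref{thm:direct-summary}): although the minimizer is unique only up to functions with vanishing gradient outside $D_0=\emptyset$ here, in our situation $D_0$ is empty, so $u$ itself is unique and in particular its value on each component $D_k$ is a well-defined number.

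For linearity, I would observe that if $u^1$ and $u^2$ solve \eqref{linear_1} with boundary data $f^1$ and $f^2$ and attain the constants $c_k^{f^1}$ and $c_k^{f^2}$ on $D_k$, then $\alpha u^1+\beta u^2$ is harmonic in $\Omega\setminus\bar D$, constant (equal to $\alpha c_k^{f^1}+\beta c_k^{f^2}$) on each component, satisfies the integral compatibility condition by linearity of the normal-derivative integral, and has boundary values $\alpha f^1+\beta f^2$; by uniqueness this is the solution for boundary data $\alpha f^1+\beta f^2$. Reading off the constant on $D_k$ gives $c_k^{\alpha f^1+\beta f^2}=\alpha c_k^{f^1}+\beta c_k^{f^2}$.

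For continuity, the key remark is that since $u\equiv c_k^f$ on $D_k$,
\begin{equation*}
\abs{c_k^f}^2\abs{D_k}
=\int_{D_k}\abs{u}^2\der x
\leq\aabs{u}_{L^2(\Omega)}^2
\leq\aabs{u}_{H^1(\Omega)}^2.
\end{equation*}
Applying \eqref{eq:u-H1} from Lemma \ref{lma:H1-estimates} gives
\begin{equation*}
\abs{c_k^f}\leq C\abs{D_k}^{-1/2}\aabs{f}_{H^{1/2}(\partial\Omega)},
\end{equation*}
and the prefactor is finite because $D_k$, being a nonempty Lipschitz domain, has positive Lebesgue measure. There is no real obstacle to overcome: the linearity is automatic from the structure of the obstacle problem, and the continuity is immediate from Lemma \ref{lma:H1-estimates} once one observes that $u$ literally equals the constant $c_k^f$ on $D_k$, so bounding $c_k^f$ reduces to bounding $\aabs{u}_{L^2(D_k)}$.
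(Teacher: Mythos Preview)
Your proof is correct and follows essentially the same approach as the paper: linearity is obtained from uniqueness and superposition (what the paper calls an ``elementary observation''), and continuity is deduced from $\aabs{u}_{L^2(D_k)}\leq\aabs{u}_{H^1(\Omega)}$ together with estimate~\eqref{eq:u-H1} of Lemma~\ref{lma:H1-estimates}, exactly as in the paper's argument.
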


\begin{proof}
Linearity is an elementary observation, and continuity follows from lemma~\ref{lma:H1-estimates}.
Namely, estimate~\eqref{eq:u-H1} yields
\begin{equation}
\aabs{u}_{L^2(D_k)}
\leq
\aabs{u}_{H^1(\Omega)}
\leq
C
\aabs{f}_{H^{1/2}(\partial\Omega)}.
\end{equation}
Since~$u$ is constant in each~$D_k$, this proves the lemma.
\end{proof}

\subsection{Layer potentials and a boundary integral estimate}\label{boundryEstimaTE}
In this subsection our main aim is to prove the following proposition.
Let $\Omega\subset\mathbb{R}^n$, $n\geq2$, be a bounded $C^1$-smooth domain and $D\subset\mathbb{R}^n$ be a bounded and connected subset of~$\Omega$ with Lipschitz regular boundary. 

\begin{proposition}\label{proENclo}
Let the reflected solution $w=u-u_0$ satisfies~\eqref{refl_lin}.
We have the following integral estimate:
\begin{equation}
\abs{\int_{\doo D}\frac{\doo w}{\doo\nu} u_0 \der S} \leq C \|u_0\|_{H^1(D)}^{2},
\end{equation}
where $C>0$ be a constant independent of~$u_0$.
\end{proposition}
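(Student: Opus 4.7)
The plan is to bound $\aabs{\doo_\nu w|_{\mathrm{ext}}}_{H^{-1/2}(\doo D)}$ by $C\aabs{u_0}_{H^1(D)}$ and close the estimate via $H^{-1/2}$--$H^{1/2}$ duality together with the trace inequality $\aabs{u_0}_{H^{1/2}(\doo D)}\leq C\aabs{u_0}_{H^1(D)}$. The key ingredient is the flux identity
\begin{equation*}
\int_{\doo D}\frac{\doo w}{\doo\nu}\,\der S = -\int_{\doo D}\frac{\doo u_0}{\doo\nu}\,\der S = 0,
\end{equation*}
where the second equality follows from $u_0$ being harmonic in $D$. This zero-flux property is precisely what forces the normal trace of $w$ to be controlled by data attached to $D$ alone, rather than by the full boundary datum $f$ on $\doo\Omega$.

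The first step is to decompose $w|_{\Omega\setminus\ol D} = w_1 + c\,w_2$, where $w_1,w_2$ are harmonic in $\Omega\setminus\ol D$ with $w_i|_{\doo\Omega}=0$ and with $w_1|_{\doo D} = -u_0|_{\doo D}$, $w_2|_{\doo D}=1$, and where $c\in\R$ is the (single) constant value of $u=u_0+w$ on the connected set $D$. Standard Dirichlet estimates for Lipschitz domains give $\aabs{w_1}_{H^1(\Omega\setminus\ol D)}\leq C\aabs{u_0}_{H^{1/2}(\doo D)}\leq C\aabs{u_0}_{H^1(D)}$ and $\aabs{w_2}_{H^1(\Omega\setminus\ol D)}\leq C$, with $C$ depending only on $\Omega$ and $D$.

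Next I would exploit the flux identity. Substituting the decomposition gives $c\int_{\doo D}\doo_\nu w_2|_{\mathrm{ext}}\,\der S = -\int_{\doo D}\doo_\nu w_1|_{\mathrm{ext}}\,\der S$, and the coefficient on the left is a strictly positive constant depending only on $\Omega$ and $D$ (by the Hopf lemma, since $w_2$ attains its maximum on $\doo D$). Viewing the right-hand side as the duality pairing of $\doo_\nu w_1|_{\mathrm{ext}}\in H^{-1/2}(\doo D)$ with $1\in H^{1/2}(\doo D)$ and using $\aabs{\doo_\nu w_1|_{\mathrm{ext}}}_{H^{-1/2}(\doo D)}\leq C\aabs{w_1}_{H^1(\Omega\setminus\ol D)}$ for the harmonic function $w_1$ yields $\abs c\leq C\aabs{u_0}_{H^1(D)}$. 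Consequently $\aabs{w}_{H^1(\Omega\setminus\ol D)}\leq C\aabs{u_0}_{H^1(D)}$, and another application of the normal-trace bound produces $\aabs{\doo_\nu w|_{\mathrm{ext}}}_{H^{-1/2}(\doo D)}\leq C\aabs{u_0}_{H^1(D)}$; the duality pairing and the trace theorem then close the argument.

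The main obstacle is the bound $\abs c\leq C\aabs{u_0}_{H^1(D)}$. A priori, $c$ is a global quantity depending on $f$ along all of $\doo\Omega$, and without extra structure one could only extract $\abs c\leq C\aabs{f}_{H^{1/2}(\doo\Omega)}$ via lemma~\ref{lma:continuous-constant}. It is precisely the flux identity---itself a consequence of the harmonicity of $u_0$ in $D$---that localizes $c$ to data associated with $D$ alone and produces the desired quadratic bound in $\aabs{u_0}_{H^1(D)}$.
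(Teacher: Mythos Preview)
Your argument is correct and takes a genuinely different route from the paper. The paper represents $w$ as a sum of single layer potentials $\mathcal S_{\partial D}\hat p+\mathcal S_{\partial\Omega}\hat q$, reduces to a boundary integral system, shows the interaction operator $K=K^{\partial D}_{\partial\Omega}K^{\partial\Omega}_{\partial D}$ is a contraction so that $(I-K)^{-1}$ is bounded, and thereby obtains $\aabs{\hat p}_{-1/2,\partial D}\leq C\aabs{\Pi_{\partial D}u_0}_{1/2,\partial D}$; the estimate is then closed by splitting $\partial_\nu w_-$ via the jump relation into $\partial_\nu w_+=-\partial_\nu u_0$ and the density $\hat p$. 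Your approach bypasses the layer-potential machinery entirely: the decomposition $w=w_1+c\,w_2$ with $w_1|_{\partial D}=-u_0|_{\partial D}$ and $w_2|_{\partial D}=1$, together with the zero-flux identity $\int_{\partial D}\partial_\nu w=0$, lets you read off $\abs c$ from $\partial_\nu w_1$ alone and close with standard Dirichlet and normal-trace estimates on the Lipschitz domain $\Omega\setminus\ol D$. Your route is more elementary and requires no boundary integral apparatus; the paper's route, on the other hand, sits naturally inside the potential-theoretic framework it builds and would adapt more directly if one wanted quantitative dependence on the geometry of $D$ relative to $\Omega$.

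One small point: invoking the Hopf lemma for the positivity of $\int_{\partial D}\partial_\nu w_2|_{\mathrm{ext}}$ is delicate, since $D$ is only Lipschitz and the pointwise Hopf lemma typically needs an interior ball condition. A cleaner justification, which you have essentially already set up, is Green's identity: since $w_2$ is harmonic with $w_2|_{\partial\Omega}=0$ and $w_2|_{\partial D}=1$,
\[
\int_{\partial D}\partial_\nu w_2\big|_{\mathrm{ext}}\,\der S \;=\; \int_{\Omega\setminus\ol D}\abs{\nabla w_2}^2\der x \;>\; 0,
\]
and this needs no regularity beyond Lipschitz. With that adjustment your proof is complete.
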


The main ingredient to show the above estimate is first to write the solution of the problem~\eqref{refl_lin} in terms of the single layer potential and then to use the properties of the layer potential operator in the appropriate Sobolev spaces.
Before going into details of the proof of this proposition, we would like to recall the integral operators of single layer type and their mapping properties.


Let $\mathcal{E}\subset \mathbb{R}^n, n\geq2$, be an open bounded Lipschitz domain. We denote interior and exterior region by $\mathcal{E}_+ \coloneqq \mathcal{E}$ and $\mathcal{E}_- \coloneqq \mathbb{R}^n\setminus\overline{\mathcal{E}}$ respectively.
We first define the single layer potential operator corresponding to the boundary~$\doo\mathcal{E}$ as follows:
\begin{equation}
\mathcal{S}_{\doo\mathcal{E}}\hat{q}(x) \coloneqq \int_{\doo\mathcal{E}} \Phi(x-y)\hat{q}(y) \der S(y),
\end{equation}
for $x\in \mathbb{R}^n\setminus\doo\mathcal{E}$, where $\hat{q}\in H^{-\frac{1}{2}}(\doo\mathcal{E})$ and~$\Phi$ is the fundamental solution for the Laplacian. If $n = 2$, we choose $r > \diam \Omega$ and $\Phi = \frac{1}{2\pi}\log \frac{r}{\abs{x}}$.
For any $\hat{q}\in H^{-\frac{1}{2}}(\doo\mathcal{E})$, the function $\mathcal{S}_{\doo\mathcal{E}}\hat{q}$ solves the Laplace equation $\Delta(\mathcal{S}_{\doo\mathcal{E}}\hat{q}) = 0$ in the region $\mathbb{R}^n\setminus{\doo\mathcal{E}}$. It is well known that the single layer potential operator defines a bounded linear operator from $H^{-\frac{1}{2}}(\doo\mathcal{E})$ to $H_{loc}^{1}(\mathbb{R}^n)$, see for instance~\cite[Theorem~6.11]{McLean:2000}.
We now define the function space, for $n\geq 2$,
\begin{equation}
\hat{H}(\doo\mathcal{E}) \coloneqq \{\hat{q} \in  H^{-\frac{1}{2}}(\doo\mathcal{E}) ; \ip{\hat{q}}{1}_{-\frac{1}{2},\frac{1}{2},\doo\mathcal{E}} = 0\}, 
\end{equation}
where $\ip{\hat{q}}{1}_{-\frac{1}{2},\frac{1}{2},\doo\mathcal{E}}$ represents the duality product between $H^{-\frac{1}{2}}(\doo\mathcal{E})$ and $H^{\frac{1}{2}}(\doo\mathcal{E})$. The (interior/exterior) non-tangential boundary traces of~$\mathcal{S}_{\doo\mathcal{E}}\hat{q}$ are given by 
\begin{equation}
\lim_{\substack{x\rightarrow x_0 \\ x \in \Gamma_{+}(x_0)}} \mathcal{S}_{\doo\mathcal{E}}\hat{q}(x)
= \lim_{\substack{x\rightarrow x_0 \\ x \in \Gamma_{-}(x_0)}} \mathcal{S}_{\doo\mathcal{E}}\hat{q}(x)
= S_{\doo\mathcal{E}}\hat{q}(x_0), \  \ x_0\in\doo\mathcal{E},
\end{equation}
where $S_{\doo\mathcal{E}}\hat{q}$ is the trace of the single layer operator $\mathcal{S}_{\doo\mathcal{E}}\hat{q}$ on~$\doo\mathcal{E}$, i.e.,
\begin{equation}
S_{\doo\mathcal{E}}\hat{q}(x) \coloneqq \int_{\doo\mathcal{E}} \Phi(x-y)\hat{q}(y) \der S(y), \  \ x\in\doo\mathcal{E},  
\end{equation}
and~$\Gamma_{\pm}$ denote the interior of the two components (in~${\mathcal{E}}_{+}$ and in~${\mathcal{E}}_{-}$) of a regular family of circular doubly truncated cones $\{\Gamma(x) ; x\in \doo\mathcal{E} \} $ with vertex at~$x$ (for definition see for example~\cite[section 0.4]{Verchota:1984}).
In addition, for almost any $x_0\in \doo\mathcal{E}$,
\begin{equation}
\begin{split}
\lim_{\substack{x\rightarrow x_0 \\ x \in \Gamma_{\pm}(x_0)}} \frac{\doo\mathcal{S}_{\doo\mathcal{E}}\hat{q}(x)}{\doo\nu}
&\coloneqq \lim_{\substack{x\rightarrow x_0 \\ x \in \Gamma_{\pm}(x_0)}} \ip{\nu(x_0)}{\nabla\mathcal{S}_{\doo\mathcal{E}}\hat{q}(x)}
\\&\eqqcolon (\pm\frac{1}{2} I + \mathcal{K}^{*})\hat{q}(x_0)
\end{split}
\end{equation}
where~$\mathcal{K}^{*}$ is the adjoint of the double layer potential operator, see for instance~\cite{Verchota:1984}.

We now define the equilibrium density for~$\doo\mathcal{E}$ as in~\cite[Theorem~8.15]{McLean:2000}.
There exists a unique distribution $\psi_{eq} \in H^{-\frac{1}{2}}(\doo\mathcal{E})$ so that~$S_{\doo\mathcal{E}}\psi_{eq}$ is constant on~$\doo\mathcal{E}$ and $\ip{\psi_{eq}}{1}_{-\frac{1}{2}, \frac{1}{2}, \doo\mathcal{E}} = 1$.
In addition, if $n\geq 3$, then $S_{\doo\mathcal{E}}\psi_{eq} >0$.
We now define the capacity of the boundary in terms of the equilibrium density. If $n\geq 3,$ the capacity of $\doo\mathcal{E}$ is denoted as $\text{Cap}(\doo\mathcal{E})$ and defined as $\text{Cap}(\doo\mathcal{E}) = \frac{1}{S_{\doo\mathcal{E}}\psi_{eq} }$ and
$\text{Cap}(\doo\mathcal{E}) = \exp(-2\pi S_{\doo\mathcal{E}}\psi_{eq})$ when $n = 2$.
Following McLean~\cite{McLean:2000}, see also~\cite{Munnier:Ramdani:2017}, we define
\begin{equation}
H(\doo\mathcal{E}) \coloneqq \{q\in H^{\frac{1}{2}}(\doo\mathcal{E}) ; \ip{\psi_{eq}}{q}_{-\frac{1}{2},\frac{1}{2},\doo\mathcal{E}} =0 \}.
\end{equation}
Note that the operator 
\begin{equation}\label{single1}
S_{\doo\mathcal{E}} \colon  \hat{H}(\doo\mathcal{E}) \rightarrow H(\doo\mathcal{E})
\end{equation}
is defined by 
\begin{equation}\label{single2}
q \coloneqq S_{\doo\mathcal{E}} \hat{q}. 
\end{equation}
For $n=2$, the operator $S_{\doo\mathcal{E}}$ is an isomorphism if $\text{Cap}(\doo\mathcal{E}) \neq r$, see~\cite[Theorem~8.16]{McLean:2000}.
When $n \geq 3$, $S_{\doo\mathcal{E}}$ is injective~\cite[corollary 8.11]{McLean:2000}.
Also, it is of Fredholm index zero~\cite[theorem 7.6]{McLean:2000}. Hence $S_{\doo\mathcal{E}}$ is an isomorphism.
Note that we do not need any assumption on the capacity of the domain for $n\geq 3$.
With respect to the inner products 
\begin{equation}
\ip{\hat{q}}{\hat{p}}_{-\frac{1}{2},\doo\mathcal{E}} = \ip{q}{p}_{\frac{1}{2},\doo\mathcal{E}} = \ip{\hat{q}}{p}_{-\frac{1}{2},\frac{1}{2},\doo\mathcal{E}}, \  \ \forall \ \hat{q}, \hat{p} \in \hat{H}(\doo\mathcal{E}),
\end{equation}
the above isomorphism is actually an isometry through the identities
\begin{equation}\label{IDisometry}
\|\hat{q}\|_{-\frac{1}{2},\doo\mathcal{E}}^{2} = \|q\|_{\frac{1}{2},\doo\mathcal{E}}^{2}
 = \int_{\mathbb{R}^n}\abs{\nabla(\mathcal{S}_{\doo\mathcal{E}}\hat{q})}^2 \der x, \ \forall \ \hat{q} \in \hat{H}(\doo\mathcal{E}).
\end{equation}
The second equality can be realised by using the asymptotic behaviour of the single layer potential at infinity 
\begin{equation}\label{asymp}
\mathcal{S}_{\mathcal{E}}\hat{q}(x) =
\begin{cases}
\frac{1}{2\pi}  \ip{\hat{q}}{1}_{-\frac{1}{2},\frac{1}{2},\doo\mathcal{E}} \log{\frac{r}{\abs{x}}} + \mathcal{O}(\abs{x}^{-1}) & \text{ when } n=2\\ \\
\mathcal{O}(\abs{x}^{2-n}) & \text{ when } n \geq 3
\end{cases}
\end{equation}
and Green's formula, see~\cite[Theorem 8.12]{McLean:2000}.
We also need the following orthogonal decomposition in our analysis. Let us first define an orthogonal projection 
\begin{equation}
\Pi_{\doo\mathcal{E}} \colon H^{\frac{1}{2}}(\doo\mathcal{E}) \rightarrow H(\doo\mathcal{E}),
\end{equation}
which is a bounded linear operator with the following unique decomposition
\begin{equation}
q = \ip{\psi_{eq}}{q}1 + q_0,
\end{equation}
for all $q\in H^{\frac{1}{2}}(\doo\mathcal{E}),$ where $q_0 \coloneqq \Pi_{\doo\mathcal{E}}q \in H(\doo\mathcal{E}),$ see~\cite{Munnier:Ramdani:2017}. Denote by~$\Tr_{\doo\mathcal{E}}$, the trace operator mapped into~$H^{\frac{1}{2}}(\doo\mathcal{E})$ and also $\Tr_{\doo\mathcal{E}}^{0} \coloneqq \Pi_{\doo\mathcal{E}}(\Tr_{\doo\mathcal{E}})$.

Let us assume that the diameter of~$\Omega$ is less than~$r$ only when~$n=2$.
In the higher dimensional case we do not need the assumption since the operator $S_{\doo\mathcal{E}}$ is an isomorphism even without any assumptions on the capacity of the domain.
In our setting~$D$ is a bounded Lipschitz domain included in~$\Omega$. So, the assumption on the diameter of~$\Omega$ in $\mathbb{R}^2$ implies that $\text{Cap}(\doo\Omega)<r$ and $\text{Cap}(\doo D)<r$~\cite[exercise 8.12]{McLean:2000}.
Recall the model~\eqref{refl_lin} in the case where~$D$ is connected.
The reflected solution $w = u- u_0$ satisfies
\begin{equation}\label{refl_lin2}
\begin{cases}
\Delta w = 0 \ \text{in}\ \Omega\setminus\bar{D} \\
w=c_f-u_0 \ \text{in}\ \bar{D} \\
\int_{\doo D}\frac{\doo w}{\doo\nu} \der S = -\int_{\doo D}\frac{\doo u_0}{\doo\nu} \der S \\
w= 0 \ \text{on}\ \partial \Omega.
\end{cases}
\end{equation} 
Here~$c_f$ is a constant depending on~$f$.
(This dependence is linear and continuous by lemma~\ref{lma:continuous-constant}.)
To solve the problem~\eqref{refl_lin2}, we use the integral equation method following~\cite[Proposition~2.12]{Munnier:Ramdani:2017}. We represent the solution~$w$ as the superposition of single layer potentials on the boundaries~$\doo D$ and~$\doo\Omega$ as follows:
\begin{equation}
w(x) \coloneqq \mathcal{S}_{\doo D}\hat{p} + \mathcal{S}_{\doo \Omega}\hat{q}, \  \  x\in D\cup(\Omega\setminus\overline{D}),
\end{equation}
where $\hat{p}\in \hat{H}(\doo D)$ and $\hat{q}\in H^{-\frac{1}{2}}(\doo\Omega).$ The density functions~$\hat{p}$ and~$\hat{q}$ satisfy the following system of integral equations on~$\doo D$ and~$\doo\Omega$:
\begin{equation}\label{inteSysD} 
p + \Tr_{\doo D}(\mathcal{S}_{\doo\Omega}\hat{q}) = c_f - u_0 \  \ \text{on}\ \doo D 
\end{equation}
\begin{equation}\label{inteSysO} 
\Tr_{\doo\Omega}(\mathcal{S}_{\doo D}\hat{p}) + q = 0 \  \ \text{on}\ \doo\Omega. 
\end{equation}
We define here the boundary interaction operators~$K_{\doo\Omega}^{\doo D}$ and~$K_{\doo D}^{\doo\Omega}$ between~$\doo\Omega$ and~$\doo D$ as follows:
\begin{equation}
K_{\doo\Omega}^{\doo D} \colon  H(\doo\Omega) \rightarrow H(\doo D)
\end{equation}
by
\begin{equation}
K_{\doo\Omega}^{\doo D}(q) \coloneqq \Tr_{\doo D}^{0}(\mathcal{S}_{\doo\Omega}\hat{q})
\end{equation}
and
\begin{equation}
K_{\doo D}^{\doo\Omega} \colon  H(\doo D) \rightarrow  H(\doo\Omega)
\end{equation}
by
\begin{equation}
K_{\doo D}^{\doo\Omega}(p) \coloneqq \Tr_{\doo\Omega}^{0}(\mathcal{S}_{\doo D}\hat{p}),
\end{equation}
for more details see~\cite{Munnier:Ramdani:2017}.

Before presenting the proof of Proposition~\ref{proENclo}, we state the following lemma.

\begin{lemma}
\label{property_layer}
The operators defined above have the following properties:
\begin{enumerate}[(i)]
	\item If $p\in H(\doo D)$, then $q:= \Tr_{\doo\Omega}(\mathcal{S}_{\doo D}\hat{p}) \in H(\doo\Omega)$.
	\item The norms of the operators $K_{\doo\Omega}^{\doo D}$ and $K_{\doo D}^{\doo\Omega}$ are strictly less than~$1$.
\end{enumerate}
\end{lemma}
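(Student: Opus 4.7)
For (i): For $p = S_{\partial D}\hat p$ with $\hat p\in\hat H(\partial D)$, I need to verify that $\langle \psi_{eq}^{\partial\Omega}, \mathcal{S}_{\partial D}\hat p|_{\partial\Omega}\rangle_{-1/2,1/2,\partial\Omega}=0$. Applying Fubini to the defining integral of $\mathcal{S}_{\partial D}\hat p$ rewrites the pairing as $\int_{\partial D}\hat p(y)\,\mathcal{S}_{\partial\Omega}\psi_{eq}^{\partial\Omega}(y)\,dS(y)$. By the defining property of the equilibrium density, $\mathcal{S}_{\partial\Omega}\psi_{eq}^{\partial\Omega}$ has constant trace on $\partial\Omega$; since it is harmonic in $\Omega$, uniqueness for the Dirichlet problem forces it to equal that constant throughout $\Omega$, and in particular on $\partial D\subset\Omega$. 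The integral then reduces to the constant times $\int_{\partial D}\hat p\,dS = 0$.

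For (ii), the first step I plan to carry out is to show that $K_{\partial\Omega}^{\partial D}$ is the Hilbert-space adjoint of $K_{\partial D}^{\partial\Omega}$. A symmetric Fubini computation, identical in spirit to (i), gives exactly this: the constant $c'$ appearing in the decomposition $\mathcal{S}_{\partial\Omega}\hat q|_{\partial D} = K_{\partial\Omega}^{\partial D}(q) + c'$ is annihilated when paired against $\hat p\in\hat H$, leaving $\langle p,K_{\partial\Omega}^{\partial D}q\rangle_{1/2,\partial D}$. The two operator norms therefore coincide, so it suffices to prove $\|K\|<1$ for $K := K_{\partial D}^{\partial\Omega}$. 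Compactness of $K$ will be immediate: because the kernel $\Phi(x-y)$ is smooth on $\partial\Omega\times\partial D$, the operator factors continuously through $C^\infty(\partial\Omega)$, which embeds compactly into $H(\partial\Omega)$.

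For the pointwise strict contraction $\|Kp\|<\|p\|$ when $p\neq 0$, I plan to use the isometry~\eqref{IDisometry} and split each norm into Dirichlet energies over $\Omega$ and over $\mathbb R^n\setminus\overline\Omega$. With $q = Kp$ and $\hat q\in\hat H(\partial\Omega)$ its density, both $\mathcal{S}_{\partial D}\hat p$ and $\mathcal{S}_{\partial\Omega}\hat q$ are harmonic outside $\overline\Omega$, share the trace $q$ on $\partial\Omega$, and have matching decay at infinity (the vanishing of the logarithmic coefficient when $n=2$ is exactly the condition that $\hat p$ and $\hat q$ lie in $\hat H$), so exterior uniqueness makes the two potentials, and hence their exterior energies, equal. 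Inside $\Omega$ both potentials again share the trace $q$ on $\partial\Omega$, but only $\mathcal{S}_{\partial\Omega}\hat q$ is harmonic throughout $\Omega$; Dirichlet minimization therefore yields $\int_\Omega|\nabla\mathcal{S}_{\partial\Omega}\hat q|^2\leq\int_\Omega|\nabla\mathcal{S}_{\partial D}\hat p|^2$, with equality forcing coincidence in $\Omega$ and hence in all of $\mathbb R^n$, which is incompatible with the normal-derivative jump $\hat p$ of $\mathcal{S}_{\partial D}\hat p$ across $\partial D$ unless $\hat p = 0$.

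To upgrade this pointwise strict contraction to the operator bound $\|K\|<1$ I will combine it with the compactness established above: if $\|K\|=1$, a maximizing sequence $p_n$ with $\|p_n\|=1$ and $\|Kp_n\|\to 1$ would have a weakly convergent subsequence $p_n\rightharpoonup p_*$ with $\|p_*\|\leq 1$, and then compactness gives $Kp_n\to Kp_*$ strongly, so $\|Kp_*\|=1$; this is absurd when $p_*=0$ and contradicts the strict contraction when $p_*\neq 0$. The main technical point I expect to need care with is the exterior uniqueness step, especially in dimension two, where the correct decay at infinity of a single-layer potential holds precisely for densities in $\hat H$ rather than arbitrary $H^{-1/2}$.
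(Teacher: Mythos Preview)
Your argument is correct. Part~(i) is exactly the Fubini/equilibrium-density computation that underlies \cite[Proposition~2.9]{Munnier:Ramdani:2015}, which the paper cites. For part~(ii) your route differs in execution from the paper's, though the underlying ideas are the same. The paper invokes the variational characterization~\eqref{opNorM}, which rests on an orthogonal decomposition of the weighted space $W_0^1(\mathbb{R}^n)$ taken from \cite[Proposition~2.7]{Munnier:Ramdani:2015}: the single-layer potential $\mathcal{S}_{\partial D}\hat p$ minimizes the $W_0^1(\mathbb{R}^n)$-norm among all functions with the same projected trace on $\partial D$, so taking $\mathcal{S}_{\partial\Omega}\hat q$ as a competitor gives $\|K_{\partial\Omega}^{\partial D}q\|\le\|q\|$; compactness then yields an extremizer, and equality forces the two potentials to coincide, contradicting the jump relation. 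You instead observe that $K_{\partial D}^{\partial\Omega}$ and $K_{\partial\Omega}^{\partial D}$ are adjoint (so their norms agree), then split the energy identity~\eqref{IDisometry} into the exterior of $\Omega$, where the two potentials coincide by exterior uniqueness, and the interior, where Dirichlet's principle gives the inequality directly; the same jump-relation contradiction handles the equality case, and a standard weak-compactness argument upgrades the pointwise strict contraction to $\|K\|<1$. Your version is somewhat more self-contained, avoiding the black-box decomposition lemma; the paper's version is more concise by citing it. One small point worth making explicit when you write this up: the exterior uniqueness step in dimension two does require the decay guaranteed by $\hat p,\hat q\in\hat H$, as you note, and this is precisely where the choice of fundamental solution with $r>\operatorname{diam}\Omega$ enters.
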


\begin{proof}
(i) The first part of the lemma is proved in~\cite[Proposition~2.9]{Munnier:Ramdani:2017} for a planar domain.
However, for $n\geq 3$, we have the asymptotic behaviour of the single layer potential at infinity of the form \eqref{asymp}.
In this case, the proof goes similarly to~\cite[Proposition~2.9]{Munnier:Ramdani:2017}.

(ii)  For $n=2$, the proof follows from~\cite[Proposition~2.11]{Munnier:Ramdani:2017}.
In higher dimensions the proof goes along the same lines as in~\cite[Proposition~2.11]{Munnier:Ramdani:2017}.
For the convenience of the reader we present more details.
To prove the appropriate norm bounds for the operators $K_{\doo\Omega}^{\doo D}$ and $K_{\doo D}^{\doo\Omega}$, we first define certain kind of quotient weighted Sobolev spaces:
\begin{equation}
W_{0}^{1}(\mathbb{R}^n) := \{u\in \mathcal{D}'(\mathbb{R}^n) ; \rho u\in L^2(\mathbb{R}^n), \nabla u \in L^2(\mathbb{R}^n) \}/\mathbb{R}
\end{equation}
where the weight is given by
\begin{equation}
\rho(x) =
\begin{cases}
(\sqrt{1+\abs{x}^2} \log(2+\abs{x}^2))^{-1} & \text{when } n=2\\ \\
({1+\abs{x}^2})^{-1/2} & \text{when } n \geq 3.
\end{cases}
\end{equation}
Under the inner product
\begin{equation}
\langle u,v \rangle_{W_{0}^{1}(\mathbb{R}^n)} := \int_{\mathbb{R}^n} \nabla u \cdot \nabla v \der x,
\end{equation}
the space $W_{0}^{1}(\mathbb{R}^n)$ is a Hilbert space.
For any $q\in H(\doo\Omega)$, we have
\begin{equation}
\label{opNorM}
\begin{split}
&\norm{K_{\doo\Omega}^{\doo D}  q}_{1/2,\doo D}
= \\
& \quad \inf \left\{\|u\|_{W_{0}^{1}(\mathbb{R}^n)} ; u \in W_{0}^{1}(\mathbb{R}^n),
\Tr_{\doo D}^{0}u = \Tr_{\doo D}^{0} (\mathcal{S}_{\doo\Omega} \hat{q})\right\}. 
\end{split}
\end{equation}
The above identity can be proved by considering an orthogonal decomposition of the Hilbert space 
$W_{0}^{1}(\mathbb{R}^n)$ in terms of the single layer potential operator.
See~\cite[Proposition~2.7]{Munnier:Ramdani:2017} for the proof in $\mathbb{R}^2$; the same technique works in higher dimensions.
Therefore
\begin{equation}
\|K_{\doo\Omega}^{\doo D} q\|_{1/2,\doo D} \leq \|\mathcal{S}_{\doo\Omega}\hat{q}\|_{W_{0}^{1}(\mathbb{R}^n)} = \|q\|_{1/2,\doo\Omega},
\end{equation}
which implies that $\|K_{\doo\Omega}^{\doo D}\| \leq 1$.
Since $K_{\doo\Omega}^{\doo D}$ is compact, the norm~\eqref{opNorM} is achieved for some $q_{\doo\Omega} \in H(\doo\Omega)$.
If we suppose
\begin{equation}
\|K_{\doo\Omega}^{\doo D} q_{\doo\Omega}\|_{1/2,\doo D} = \|q_{\doo\Omega}\|_{1/2,\doo\Omega}
\end{equation}
then $\mathcal{S}_{\doo D}\hat{q}_{\doo D} = \mathcal{S}_{\doo\Omega}\hat{q}_{\doo\Omega}$ in $\R^n, n\geq 3$, where $q_{\doo D} := K_{\doo\Omega}^{\doo D} q_{\doo\Omega}$. Finally using the jump relation of the derivative of the single layer potential, we have $\hat{q}_{\doo\Omega}=0$, which gives a contradiction.
Thus, $\norm{K_{\doo\Omega}^{\doo D}} < 1 $ and the proof is complete.
\end{proof}

According to lemma~\ref{property_layer},
we have $\Tr_{\doo\Omega}(\mathcal{S}_{\doo D}) \in H(\doo\Omega)$, which implies \mbox{$\Tr_{\doo\Omega}(\mathcal{S}_{\doo D}) = K_{\doo D}^{\doo\Omega}p$}.
Therefore, we deduce from~\eqref{inteSysO} that $K_{\doo D}^{\doo\Omega}p + q = 0$. Now applying the projection~$\Pi_{\doo D}$ to~\eqref{inteSysD}, we obtain the following system of integral equations:
\begin{align}\label{inteSysD1} 
p + K_{\doo\Omega}^{\doo D}q &= - \Pi_{\doo D}(u_0) &\text{ on }\doo D  \\
\label{inteSysO2} 
K_{\doo D}^{\doo\Omega}p + q &= 0 &\text{ on } \doo\Omega. 
\end{align}
Replacing~$q$ by $-K_{\doo D}^{\doo\Omega}p$ in~\eqref{inteSysD1}, we obtain 
\begin{equation}
(I-K)p =  - \Pi_{\doo D}(u_0) \  \ \text{on}\ \doo D, 
\end{equation}
where $K \coloneqq K_{\doo\Omega}^{\doo D}K_{\doo D}^{\doo\Omega}$ defines a bounded linear map from~$H(\doo D)$ to~$H(\doo D)$.
From lemma~\ref{property_layer} one can observe that the operator~$K$ is a contraction map and hence $I-K$ is invertible. Therefore, using identity~\eqref{IDisometry}, we have
\begin{equation}\label{proj_Inq}
\|\hat{p}\|_{-\frac{1}{2},\doo D} = \|p\|_{\frac{1}{2},\doo D} \leq C \|\Pi_{\doo D}(u_0)\|_{\frac{1}{2},\doo D},
\end{equation}
where the constant $C>0$ does not depend on~$u_0$.

We are now in a position to present the details of the proof of proposition~\ref{proENclo}.

\begin{proof}[Proof of proposition~\ref{proENclo}]
For our convenience we use the following notation: $w_{+}(x) \coloneqq w(x)$ if $x\in D$ and $w_{-}(x) \coloneqq w(x)$ if $x\in \Omega\setminus\overline{D}$. Fix a point~$x_0$ on~$\doo D$. We now calculate the jump of the Neumann trace at the point~$x_0$ from the interior and exterior of the boundary~$\doo D$. The Neumann trace of~$w_{+}$ from inside~$D$ at the point~$x_0$ is
\begin{equation}
\begin{split}
\frac{\doo w_{+}(x_0)}{\doo\nu} 
& = \frac{\doo}{\doo\nu}(\mathcal{S}_{\doo D}\hat{p})|_{x_0}^{+} + \frac{\doo}{\doo\nu}(\mathcal{S}_{\doo\Omega}\hat{q}(x_0)) \\
& = \left(\frac{1}{2}I + \mathcal{K}^{*}\right) \hat{p}(x_0) + \frac{\doo}{\doo\nu}(\mathcal{S}_{\doo\Omega}\hat{q}(x_0)),
\end{split}
\end{equation}
where $\frac{\doo}{\doo\nu}(\mathcal{S}_{\doo D}\hat{p})|_{x_0}^{+} $ denotes the Neumann trace of the single layer potential at~$x_0$ from the inside~$D$. More precisely, for $ x_0\in \doo D$,
\begin{equation}
\frac{\doo}{\doo\nu}(\mathcal{S}_{\doo D}\hat{p})|_{x_0}^{+} \coloneqq \lim_{\substack{x\rightarrow x_0 \\ x \in \Gamma_{+}(x_0)}} \frac{\doo\mathcal{S}_{\doo D}\hat{p}(x)}{\doo\nu}.
\end{equation}
Similarly, the Neumann trace of~$w_{-}$ from the exterior of~$D$ at~$x_0$ is
\begin{equation}
\begin{split}
\frac{\doo w_{-}(x_0)}{\doo\nu} 
& = \frac{\doo}{\doo\nu}(\mathcal{S}_{\doo D}\hat{p})|_{x_0}^{-} + \frac{\doo}{\doo\nu}(\mathcal{S}_{\doo\Omega}\hat{q}(x_0)) \\
& = \left(- \frac{1}{2}I + \mathcal{K}^{*}\right) \hat{p}(x_0) + \frac{\doo}{\doo\nu}(\mathcal{S}_{\doo\Omega}\hat{q}(x_0)),
\end{split}
\end{equation}
where $\frac{\doo}{\doo\nu}(\mathcal{S}_{\doo D}\hat{p})|_{x_0}^{-} $ denotes the Neumann trace of the single layer potential at~$x_0$ from the inside~$\Omega\setminus\overline{D}$. More precisely, for $x_0\in \doo D$,
\begin{equation}
\frac{\doo}{\doo\nu}(\mathcal{S}_{\doo D}\hat{p})|_{x_0}^{-} \coloneqq \lim_{\substack{x\rightarrow x_0 \\ x \in \Gamma_{-}(x_0)}} \frac{\doo\mathcal{S}_{\doo D}\hat{p}(x)}{\doo\nu}.
\end{equation}
Therefore, the jump of the Neumann trace at the point~$x_0$ is given by
\begin{equation}
\frac{\doo w_{+}(x_0)}{\doo\nu} - \frac{\doo w_{-}(x_0)}{\doo\nu} = \hat{p}(x_0).
\end{equation}
Finally, the boundary integral $\int_{\doo D}\frac{\doo w}{\doo\nu} u_0 \der S$ can be estimated as follows:
\begin{equation}
\begin{split}
\abs{\int_{\doo D}\frac{\doo w}{\doo\nu} u_0 \der S}
& = \abs{\int_{\doo D}\frac{\doo w_{-}}{\doo\nu} u_0 \der S} \\
& \leq
\bigg\lvert{\underbrace{\int_{\doo D}{\frac{\doo w_{+}}{\doo\nu} }{u_0} \der S}_{\eqqcolon I_1}}\bigg\rvert
+
\bigg\lvert{\underbrace{\int_{\doo D}{\hat{p}}{u_0} \der S}_{\eqqcolon I_2}}\bigg\rvert
.
\end{split}
\end{equation}
Note that
$$\frac{\doo w_{+}}{\doo\nu}|_{\doo D}  = -\frac{\doo u_0}{\doo\nu}|_{\doo D},$$
 so applying the trace theorem we estimate integral~$I_1$ by
\begin{equation}
\abs{I_1}
\leq \left\|\frac{\doo u_0}{\doo\nu}\right\|_{H^{-\frac{1}{2}}(\doo D)} \|u_0\|_{H^1(D)}
\leq C \|u_0\|_{H^1(D)}^{2}.
\end{equation}
Using~\eqref{proj_Inq} and the trace theorem, we obtain
\begin{equation}
\begin{split}
\abs{I_2} 
& \leq \|\hat{p}\|_{H^{-\frac{1}{2}}(\doo D)} \|u_0\|_{H^{\frac{1}{2}}(\doo D)} \\
& \leq \|\hat{p}\|_{-\frac{1}{2},\doo D} \|u_0\|_{H^{\frac{1}{2}}(\doo D)} \\
& \leq C \|\Pi_{\doo D}(u_0)\|_{\frac{1}{2},\doo D} \|u_0\|_{H^1(D)} \\
& \leq C \left[\|u_0\|_{H^{\frac{1}{2}}(\doo D)} + C \|\psi_{eq}\|_{H^{-\frac{1}{2}}(\doo D)} \|u_0\|_{H^{\frac{1}{2}}(\doo D)}\right]\|u_0\|_{H^1(D)} \\
& \leq C \|u_0\|_{H^1(D)}^{2}.
\end{split}
\end{equation}
Hence
\begin{equation}
\abs{\int_{\doo D}\frac{\doo w}{\doo\nu}u_0} \leq C \|u_0\|_{H^1(D)}^{2},
\end{equation}
where~$C$ is a positive constant independent of~$u_0$.
\end{proof}

\subsection{The enclosure method}
\label{sec:p2-enclosure}

We start with the enclosure method.
The basic idea of the this method is to use the complex geometrical optic (CGO) solution as a test function instead of using the fundamental solution of the Laplace operator.
We follow the approach of Ikehata~\cite{Ikehata:1999:jan}.
We mainly use the CGO solution with linear phase function for detecting the convex hull of the obstacle.
Here it is more convenient to work with complex-valued functions, but in the linear case $p=2$ the real and complex versions of the inverse boundary value problem are trivially equivalent.

CGO solutions with linear phase for the Laplacian can be constructed as follows.
Take any unit vector $\rho\in\R^n$ and an orthogonal unit vector $\rho^\perp\in\R^n$.
Let $t$ and $\tau$ be real numbers with $\tau>0$.
Then the function $u_0(x)=\exp(\tau(x\cdot\rho+ix\cdot\rho^\perp-t))$ is harmonic.
Given $\rho$, we consider $\rho^\perp$ fixed although there is a freedom of choice; the results do not depend on the choice and we therefore omit $\rho^\perp$ in our notation.

We define an indicator function as
\begin{equation}\label{Indicator:p=2}
I_{\rho}(u_0,\tau,t) \coloneqq \tau^{n-2} \int_{\doo\Omega}\ol{(\Lambda_D - \Lambda_{\emptyset})(u_0)} u_0 \der S,
\end{equation}
where $\rho\in\sphere^{n-1}, \tau>0, t\in\R$ and $\Lambda_D$, $\Lambda_{\emptyset}$ are the DN maps for the problems~\eqref{linear_1} and~\eqref{linear_2} respectively.
For $\rho\in\sphere^{n-1}$ we define the convex support function~$h_D(\rho)$ of~$D$ as
\begin{equation}
\label{eq:h-def}
h_D(\rho) \coloneqq \sup_{x\in D} x\cdot\rho.
\end{equation}
One can reconstruct the convex hull of the superconductive material~$D$ from the following behavior of the indicator function.

\begin{enumerate} 
\item When $t>h_D(\rho)$ we have
\begin{equation}
\lim_{\tau \to \infty}\vert I_{\rho}(u_0,\tau,t)\vert = 0,
\end{equation}
and more precisely,
\begin{equation}
\label{main_behavior_1far}
\vert I_{\rho}(u_0,\tau,t)\vert \leq Ce^{-c\tau}
\end{equation}
for $\tau > 0$, and with~$c,C>0$.
\item When $t = h_D(\rho)$ we have
\begin{equation}
\liminf_{\tau \to \infty}\abs{I_{\rho}(u_0,\tau, h_{D}(\rho))} >0,
\end{equation}
and more precisely
\begin{equation}
\label{main_behavior_2far}
c \leq  \vert I_{\rho}(u_0,\tau,h_D(\rho)) \vert \leq C \tau^n
\end{equation}
where $c, C>0$, and $\tau >0$ for the upper bound, and $\tau \gg 1$ for the lower bound.
\item When $t < h_D(\rho)$ we have
\begin{equation}
\lim_{\tau \to \infty}\abs{I_{\rho}(u_0,\tau,t)} = \infty,
\end{equation}
and more precisely,
\begin{equation}\label{main_behavior_3far}
\vert I_{\rho}(u_0,\tau,t) \vert \geq Ce^{c\tau},
\end{equation}
where $\tau \gg1$ and $c,C>0$.
\end{enumerate}

Note that $I_{\rho}(u_0, \tau, t) = \exp(2\tau(h_D(\rho)-t)) I_{\rho}(u_0, \tau, h_D(\rho))$.
Therefore to prove the above properties of the indicator function it is enough to prove the inequality~\eqref{main_behavior_2far}.
To do that, we state the following lemma, see~\cite[Lemma~4.7]{Brander:Kar:Salo:2015} for the proof.

\begin{lemma}\label{estimateCGO:p=2}
Let $D\subset\Omega$ be a Lipschitz domain. Then, for $\tau \gg 1$ we have
\begin{equation}
\int_D \exp(-2\tau(h_D(\rho)-x\cdot\rho)) \der x \geq C\tau^{-n}.
\end{equation}
\end{lemma}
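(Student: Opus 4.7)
The plan is to concentrate the integration near a boundary point of~$D$ where $x\cdot\rho$ attains its maximum $h_D(\rho)$, and to exploit the interior cone property guaranteed by the Lipschitz assumption on~$\partial D$.

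First I pick $x_0\in\partial D$ with $x_0\cdot\rho=h_D(\rho)$; the supremum is attained on~$\ol D$ by compactness, and it cannot be realised inside the open set~$D$ since a small ball around any interior point would yield points with strictly larger $\rho$-component. Lipschitz regularity then supplies an interior truncated cone
\[
C \coloneqq \{x_0 + t\theta : 0 < t < r,\ \theta\in\sphere^{n-1},\ \theta\cdot\eta > \cos\alpha\}\subset D
\]
for some axis $\eta\in\sphere^{n-1}$, opening half-angle $\alpha\in(0,\pi/2)$, and radius $r>0$. This comes from a local graph representation $\partial D=\{y_n=\phi(y')\}$ with $\phi(0)=0$ and Lipschitz constant~$L$: since $\phi(y')\geq -L\abs{y'}$, the subgraph locally contains the cone $\{y_n<-L\abs{y'}\}$ with apex at the origin, which is $x_0$ after translation.

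Next I restrict the integral to~$C$ and translate by $-x_0$. Writing $y=x-x_0$, the exponent becomes $-2\tau(h_D(\rho)-x\cdot\rho)=2\tau y\cdot\rho$, which I bound crudely from below using $y\cdot\rho\geq-\abs{y}$. Passing to spherical coordinates $y=t\theta$ separates the problem: the angular integration over the fixed spherical cap $\{\theta\cdot\eta>\cos\alpha\}$ contributes a positive constant $c_\alpha$ independent of~$\tau$, while the radial integral $\int_0^r e^{-2\tau t} t^{n-1}\der t$ becomes, after the substitution $s=2\tau t$, equal to $(2\tau)^{-n}\int_0^{2\tau r} e^{-s}s^{n-1}\der s$. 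The latter increases to $\Gamma(n)>0$ as $\tau\to\infty$, so for $\tau$ sufficiently large it is bounded below by $\Gamma(n)/2$. Multiplying these factors yields the desired $C\tau^{-n}$ lower bound.

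The only step requiring real geometric input is extracting the interior cone at the specific extremal point~$x_0$; thereafter everything reduces to an elementary change of variables and a $\Gamma$-function calculation. A pleasant feature is that the pointwise bound $y\cdot\rho\geq-\abs{y}$ is insensitive to the orientation of the cone relative to~$\rho$, so there is no need to track how $\eta$ sits against $\rho$ — any interior cone at any extremal point will do, which is convenient since $\rho$ ranges over $\sphere^{n-1}$ while~$D$ is fixed.
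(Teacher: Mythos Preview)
Your argument is correct. The paper does not give its own proof here but cites \cite[Lemma~4.7]{Brander:Kar:Salo:2015}, and the method there is the same interior-cone argument you present: locate an extremal boundary point $x_0$ with $x_0\cdot\rho=h_D(\rho)$, use the Lipschitz interior cone condition at $x_0$, and reduce to a radial integral that scales like $\tau^{-n}$. Your use of the crude estimate $y\cdot\rho\geq-\abs{y}$ in place of tracking the orientation of the cone axis against $\rho$ is a harmless simplification; it throws away a constant but costs nothing at the level of the stated bound.
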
 

Therefore estimate~\eqref{main_behavior_2far} follows from proposition~\ref{proENclo} and the properties of the CGO solutions as described in lemma~\ref{estimateCGO:p=2}.
Using the above type of CGO solution, we describe a reconstruction procedure for the enclosure method as follows:\\
\textbf{Step 1.} For each direction $\rho\in\sphere^{n-1}$, we first define an indicator function $I_{\rho}(u_0, \tau, t)$ via the use of CGO solution and the corresponding measurement data (DN map) as in~\eqref{Indicator:p=2}. \\
\textbf{Step 2.} When the hyperplane (level set) $\{x\in\R^n ; x\cdot\rho = t\}$ moves along~$\rho$ for each~$\rho$ and~$t$, observe the asymptotic behavior of~$I_{\rho}(u_0, \tau, t)$ as $\tau\gg 1$ as in~\eqref{main_behavior_1far},~\eqref{main_behavior_2far} and~\eqref{main_behavior_3far}. Then one can know whether or not the level set of the phase function of the CGO solution touches the interface of the obstacle. \\
\textbf{Step 3.} Now, the support function~$h_D(\rho)$
can be determined by the following formula:
\begin{equation}
h_D(\rho) - t = \lim_{\tau\to\infty} \frac{\log\abs{I_{\rho}(u_0, \tau, t)}}{2\tau}.
\end{equation}
\textbf{Step 4.} Finally, the convex hull of~$D$ can be reconstructed by taking the intersection of $\{x\in\sphere^n ; x\cdot\rho \leq h_D(\rho)\}$ in a dense set of directions $\rho\in\sphere^{n-1}$.

Using the enclosure method we have thus proven the following proposition.
Note that it is weaker than theorem~\ref{thm:p2-inverse}.
Both results are constructive, as their proofs come with a method for finding the inclusion or its convex hull.

\begin{proposition}
\label{prop:p2-enclosure}
Let $\Omega\subset\R^n$ be a bounded Lipschitz domain and $D\subset\Omega$ a connected and compactly contained smaller Lipschitz domain.
Consider a conductivity~$\sigma$ of the form
\begin{equation}
\sigma(x) =
\begin{cases}
1 & \text{ for } x \in \Omega\setminus \bar{D} \\
\infty & \text{ for } x \in D
\end{cases}
\end{equation}
and the corresponding DN map $\Lambda_D=\Lambda_\sigma$.
The function~$h_D$ of~\eqref{eq:h-def} is determined by the indicator function, which in turn is determined by the DN map~$\Lambda_D$.
The convex hull of~$\bar D$ can be found in terms of~$h_D$:
\begin{equation}
\operatorname{ch}(\bar D)
=
\bigcap_{\mathclap{\rho\in\sphere^{n-1}}}\{x\in\R^n;x\cdot\rho\leq h_D(\rho)\}.
\end{equation}
\end{proposition}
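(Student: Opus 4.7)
The plan is to reduce everything to the two-sided estimate~\eqref{main_behavior_2far} at the critical level $t = h_D(\rho)$. As noted just above the proposition, the scaling identity $I_\rho(\tau, t) = \exp(2\tau(h_D(\rho)-t)) I_\rho(\tau, h_D(\rho))$ then yields the three asymptotic regimes~\eqref{main_behavior_1far}--\eqref{main_behavior_3far} automatically, and taking logarithms and dividing by $2\tau$ gives
\[
h_D(\rho) - t = \lim_{\tau\to\infty}\frac{\log\abs{I_\rho(\tau,t)}}{2\tau},
\]
since the $c \leq \abs{I_\rho(\tau, h_D(\rho))} \leq C\tau^n$ part of~\eqref{main_behavior_2far} forces $\log\abs{I_\rho(\tau, h_D(\rho))}/(2\tau) \to 0$. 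The convex hull of $\bar D$ is then obtained as the stated intersection of half-spaces $\{x\cdot\rho \leq h_D(\rho)\}$ over $\rho \in \sphere^{n-1}$.

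To prove~\eqref{main_behavior_2far}, I would rewrite
\[
\abs{I_\rho(\tau,t)} = \tau^{n-2}\abs{\ip{(\Lambda_D - \Lambda_\emptyset) u_0}{\bar u_0}}
\]
and sandwich this quantity by combining Lemma~\ref{linLem1} with Proposition~\ref{proENclo}. These results are stated for real $f$, but the convexity inequality used in~\eqref{ineA} extends to complex vectors as $\abs{\eta}^2 - \abs{\zeta}^2 \geq 2\real(\bar\zeta\cdot(\eta-\zeta))$, and the layer-potential analysis behind Proposition~\ref{proENclo} is unchanged since the Hermitian quadratic form $\int\sigma\abs{\nabla u}^2$ splits cleanly into real and imaginary parts of the harmonic $u_0$. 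The outcome is the chain
\[
\int_D\abs{\nabla u_0}^2\der x \;\leq\; \abs{\ip{(\Lambda_D - \Lambda_\emptyset) u_0}{\bar u_0}} \;\leq\; \int_D\abs{\nabla u_0}^2\der x + C\aabs{u_0}_{H^1(D)}^2.
\]

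Finally, I would plug in the explicit CGO solution, which gives $\abs{u_0(x)}^2 = e^{2\tau(x\cdot\rho - t)}$ and $\abs{\nabla u_0(x)}^2 = 2\tau^2 e^{2\tau(x\cdot\rho - t)}$, so that $\int_D\abs{\nabla u_0}^2\der x$ and $\aabs{u_0}_{H^1(D)}^2$ are both of order $\tau^2\int_D e^{2\tau(x\cdot\rho - t)}\der x$. At $t = h_D(\rho)$, the pointwise bound $x\cdot\rho \leq h_D(\rho)$ on $D$ gives $\int_D e^{2\tau(x\cdot\rho - h_D(\rho))}\der x \leq \abs{D}$, producing the upper bound $\abs{I_\rho(\tau,h_D(\rho))}\leq C\tau^n$; the lower bound uses Lemma~\ref{estimateCGO:p=2} to obtain $\int_D e^{2\tau(x\cdot\rho - h_D(\rho))}\der x \geq C\tau^{-n}$ and then invokes the one-sided inequality~\eqref{eq:estimate-ii}, which carries no remainder term, to conclude $\abs{I_\rho(\tau, h_D(\rho))} \geq c > 0$ for $\tau\gg 1$. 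The main obstacle I anticipate is the complex-valued adaptation of Lemma~\ref{linLem1} and Proposition~\ref{proENclo}, specifically tracking the reflected solution $w = u - u_0$ and its boundary trace through conjugation; this is essentially bookkeeping, since the PDE is linear and the remainder term in Proposition~\ref{proENclo} depends only on $\aabs{u_0}_{H^1(D)}^2$, which for CGO data is of the same order as the leading term $\int_D\abs{\nabla u_0}^2$ and therefore absorbed harmlessly into the constant.
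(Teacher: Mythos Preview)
Your proposal is correct and follows essentially the same route as the paper: reduce to the critical estimate~\eqref{main_behavior_2far} via the scaling identity, then combine Lemma~\ref{linLem1} with Proposition~\ref{proENclo} to sandwich $\ip{(\Lambda_D-\Lambda_\emptyset)u_0}{\bar u_0}$ between multiples of $\|u_0\|_{H^1(D)}^2$, and finally plug in the explicit CGO solution together with Lemma~\ref{estimateCGO:p=2}. The paper states this more tersely (``estimate~\eqref{main_behavior_2far} follows from proposition~\ref{proENclo} and the properties of the CGO solutions''), but you have simply unpacked the same argument, including the correct observation that the complex-valued extension is harmless bookkeeping because the problem is linear.
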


Moreover, to recover the non-convex part of the superconductive obstacle, one can also use the complex geometrical optic solutions with the logarithmic phase for the Laplacian.
CGOs with logarithmic phase have been used to reconstruct the inclusions with zero or finite conductivities for the Helmholtz equation~\cite{Nakamura:Yoshida:2007,Sini:Yoshida:2012} and for elastic model~\cite{Kar:Sini:2014}.
In the present situation one can also justify the asymptotic properties of the indicator function~\eqref{main_behavior_1far}, \eqref{main_behavior_2far} and~\eqref{main_behavior_3far} using proposition~\ref{proENclo} and the properties of the CGO solutions with logarithmic phase.

\subsection{The probe method for $p=2$}

In this subsection we start with the probe method, which we describe briefly here.
For more details, consult the papers of Ikehata~\cite{Ikehata:1999:jan,Ikehata:1998,Ikehata:2005} and references therein.

Let us first define a ``needle'', which is a continuous map $\gamma\colon [0,1]\to \bar{\Omega}$ such that
$\gamma(0), \gamma(1) \in\doo\Omega$ and $\gamma(t)\in\Omega$ for $0<t<1$.
We also require the trace of the needle to have $n$-dimensional Lebesgue measure zero.

We now introduce the ``impact parameter''~$t(\gamma;D)$ which depends on a needle~$\gamma$ and the obstacle~$D$.
The impact parameter tells us the point in time when a needle hits the obstacle.
We define the hitting time as
\begin{equation}
t(\gamma; D) = \sup \{\tau\in(0,1) ; \gamma(t)\in \Omega\setminus\ol{D} \text{ for all } t<\tau\}.
\end{equation}
One of two cases always happens:
\begin{enumerate}
\item[(i)]
If $t(\gamma;D)<1$, then $\gamma(t(\gamma;D))\in \doo D$.
\item[(ii)]
If $t(\gamma;D) = 1$, then $\gamma$ does not touch any point on~$\doo D$.
\end{enumerate}

We define an indicator function which indicates whether or not a needle touches~$\doo D$.
\begin{equation}
I(t,\gamma) \coloneqq \limsup_{k\to\infty} \ip{(\Lambda_D - \Lambda_{\emptyset}) f_k(\cdot, \gamma(t))}{f_k(\cdot, \gamma(t))},
\end{equation}
where the sequence $\{f_k(\cdot, \gamma(t))\}_k$ is chosen according to the following proposition.
We could also define the indicator function as limit inferior without changing anything.
\begin{proposition}[{\cite[Proposition~2]{Ikehata:1999:oct}}]
\label{prop1lin}
Let~$\gamma$ be a needle as described above.
Then for any $t\in (0,1)$ there exists a sequence of functions $ f_k(\cdot, \gamma(t)) \in H^{1/2}(\doo\Omega)$ such that the solution~$v_k$ of
\begin{equation}
\begin{cases}
\Delta v_k = 0 \ \text{in}\ \Omega \\
v = f_k(\cdot, \gamma(t)) \ \text{on}\ \doo\Omega
\end{cases}
\end{equation}
converges to $\Phi(\cdot; \gamma(t))$ in $H_{loc}^{1}(\Omega\setminus\gamma([0,t]))$ as $k\to\infty$, where~$\Phi$ is the fundamental solution for the Laplacian.
Moreover, for any fixed open $\Gamma\subset\partial\Omega$ with non-empty exterior we can assume that each~$f_k(\cdot, \gamma(t))$ vanishes on~$\Gamma$.
\end{proposition}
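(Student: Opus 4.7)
The statement is a Runge-type density result. Setting $K:=\gamma([0,t])$, the fundamental solution $\Phi(\cdot;\gamma(t))$ is harmonic on $\Omega\setminus K$, and my goal is to approximate it in $H^1_{\mathrm{loc}}(\Omega\setminus K)$ by harmonic functions $v\in H^1(\Omega)$, with the additional requirement $v|_\Gamma=0$. My plan is the classical Hahn--Banach/Newtonian-potential scheme of Lax and Malgrange, followed by a partial-data refinement.

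I would first record the geometric input. Because $\gamma$ is continuous with $\gamma(0)\in\partial\Omega$ and $\gamma(s)\in\Omega$ for $s\in(0,1)$, the set $K$ is a compact connected arc touching $\partial\Omega$ at $\gamma(0)$. Hence $\gamma(t)$ can be joined to infinity by a continuous curve in $K\cup(\R^n\setminus\Omega)$ that avoids any compact subset $L\subset\Omega\setminus K$. Consequently, for any compactly supported distribution $\mu\in H^{-1}(\Omega\setminus K)$, the point $\gamma(t)$ lies in the same connected component of $\R^n\setminus\supp\mu$ as the point at infinity; indeed $\R^n\setminus\supp\mu$ is connected since $\supp\mu$ is compact and $n\geq 2$.

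For the main density step, let $\mu$ be a compactly supported distribution in $\Omega\setminus K$ annihilating the test class $\mathcal H(\Omega):=\{v\in H^1(\Omega):\Delta v=0\}$. Form the Newtonian potential $u:=\Phi\ast\mu$, which is harmonic on $\R^n\setminus\supp\mu$. Since $1\in\mathcal H(\Omega)$, one has $\langle\mu,1\rangle=0$, so the asymptotic~\eqref{asymp} gives $u(y)\to 0$ as $|y|\to\infty$ (with the truncated logarithm of the excerpt when $n=2$). For every $y\notin\bar\Omega$ the function $x\mapsto\Phi(x-y)$ belongs to $\mathcal H(\Omega)$, so $u(y)=\langle\mu,\Phi(\cdot-y)\rangle=0$; hence $u$ vanishes on the open set $\R^n\setminus\bar\Omega$. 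By unique continuation for the harmonic function $u$, the vanishing propagates through the connected set $\R^n\setminus\supp\mu$, in particular to $\gamma(t)$. Writing $u(\gamma(t))=\langle\mu,\Phi(\cdot;\gamma(t))\rangle=0$ and invoking Hahn--Banach yields the desired sequence $v_k\in\mathcal H(\Omega)$ converging to $\Phi(\cdot;\gamma(t))$ in $H^1_{\mathrm{loc}}(\Omega\setminus K)$, and I set $f_k:=v_k|_{\partial\Omega}$.

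For the refinement $f_k|_\Gamma=0$, I would repeat the Hahn--Banach argument with the smaller test class $\mathcal H_\Gamma(\Omega):=\{v\in\mathcal H(\Omega):v|_\Gamma=0\}$. The non-empty-exterior hypothesis on $\Gamma$ provides room to manoeuvre: for $y$ in an open subset of $\R^n\setminus\bar\Omega$ adjacent to $\partial\Omega\setminus\bar\Gamma$, modified test functions $v_y:=\Phi(\cdot-y)-H(\cdot,y)\in\mathcal H_\Gamma(\Omega)$ can be constructed by solving a mixed Dirichlet problem for~$H(\cdot,y)$ (with $H(\cdot,y)|_\Gamma=\Phi(\cdot-y)|_\Gamma$ and $H(\cdot,y)|_{\partial\Omega\setminus\bar\Gamma}=0$), after which real-analyticity of $y\mapsto H(\cdot,y)$ and unique continuation again force $u(\gamma(t))=0$. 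I expect this last step to be the main technical obstacle: in the full-data case the clean observation ``$\Phi(\cdot-y)\in\mathcal H(\Omega)$'' forces $u\equiv 0$ on the whole exterior $\R^n\setminus\bar\Omega$ immediately, whereas here one must track the parameter-dependent correction $H(\cdot,y)$ along the extended needle, and it is precisely at this point that the non-empty-exterior hypothesis is indispensable.
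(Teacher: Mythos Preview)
The paper does not actually prove this proposition: it cites Ikehata and simply records that ``the proof of the proposition uses the Runge approximation property of $-\Delta$''. Your Hahn--Banach/Newtonian-potential plan is precisely the standard Lax--Malgrange proof of that Runge property, so you are carrying out what the paper only points to. The full-data part of your argument is correct. One slip: the assertion ``$\R^n\setminus\supp\mu$ is connected since $\supp\mu$ is compact and $n\geq 2$'' is false (take $\supp\mu$ to be a sphere), but you do not need it --- the preceding sentence, that $\gamma(t)$ lies in the same component as infinity via the path through $K\cup(\R^n\setminus\Omega)$, is the correct statement and is all unique continuation requires.

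The partial-data refinement, however, has a real gap. Testing against $v_y=\Phi(\cdot-y)-H(\cdot,y)$ for exterior $y$ only yields $u(y)=\langle\mu,H(\cdot,y)\rangle$; analytic continuation of this identity in $y$ along the needle to $\gamma(t)$ gives $u(\gamma(t))=\langle\mu,H(\cdot,\gamma(t))\rangle$, not $u(\gamma(t))=0$, and nothing you have written forces the right-hand side to vanish. A route that does close: let $w$ solve $\Delta w=\mu$ in $\Omega$ with $w|_{\partial\Omega}=0$. Integration by parts turns the annihilation of $\mathcal H_\Gamma(\Omega)$ into $\partial_\nu w=0$ on $\partial\Omega\setminus\bar\Gamma$, so $w$ has vanishing Cauchy data there; unique continuation then kills $w$ on the component of $\Omega\setminus\supp\mu$ touching $\partial\Omega$, which reaches $\gamma(t)$ via $K$ and the boundary collar. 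Since $w$ thus vanishes in a full collar of $\partial\Omega$, one has $\partial_\nu w=0$ on all of $\partial\Omega$, whence $\langle\mu,h\rangle=0$ for every $h\in\mathcal H(\Omega)$; combining this with $w(\gamma(t))=\langle\mu,G(\cdot,\gamma(t))\rangle=0$ (where $G$ is the Dirichlet Green's function) and $\Phi=G+h$ gives $\langle\mu,\Phi(\cdot;\gamma(t))\rangle=0$.
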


The proof of the proposition uses the Runge approximation property of~$-\Delta$.
Notice that the sequence $f_k(\cdot, \gamma(t))$ does not depend on~$D$.
See~\cite[Proposition~2]{Ikehata:1999:oct} for more details.

We define a set~$T(\gamma)$ as
\begin{equation}
T(\gamma) \coloneqq \{\tau\in(0,1) ; I(t,\gamma) \text{ exists and is finite for all }t\in(0,\tau)\}.
\end{equation}
The set~$T(\gamma)$ can be calculated from the DN map.
We will show that
\begin{equation}
\label{eq:T-interval}
T(\gamma) = (0, t(\gamma;D)),
\end{equation}
so the impact parameter can be reconstructed using $t(\gamma; D) = \sup T(\gamma)$.
We also have that $t(\gamma; D)<1$ if and only if $\lim_{t\to t(\gamma; D)} I(t,\gamma) = \infty$.

This will allow us to reconstruct the point~$\gamma(t(\gamma;D))$ for any choice of~$\gamma$.
Under the topological assumptions of theorem~\ref{thm:p2-inverse} these points determine the set~$D$.
To justify these statements, we need some inequalities.

\begin{proposition}
\label{prop:p2-indicator-estimates}
For each needle~$\gamma$ and $t\in (0, t(\gamma;D))$ we have the inequalities
\begin{equation}
\int_{D}\abs{\nabla\Phi(x,\gamma(t))}^2\der x
\leq
I(t,\gamma)
\end{equation}
and
\begin{equation}
I(t,\gamma)
\leq
C
\left(
\int_{D}\abs{\nabla\Phi(x,\gamma(t))}^2\der x + \int_{D}\abs{\Phi(x,\gamma(t))}^2\der x
\right)
,
\end{equation}
where~$C$ is a positive constant.

Moreover, if $t$ is such that $\gamma (t) \in \bar D$, then $I(t,\gamma) = \infty$.
\end{proposition}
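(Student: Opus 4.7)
The plan is to apply Lemma~\ref{linLem1} and Proposition~\ref{proENclo} along the Runge-approximating sequence $f_k=f_k(\cdot,\gamma(t))$ supplied by Proposition~\ref{prop1lin}, with $v_k$ denoting the corresponding harmonic extension and $u_k$ the solution of~\eqref{linear_1} with boundary data $f_k$. The preliminary observation is that for $t\in(0,t(\gamma;D))$ the curve $\gamma([0,t])$ is disjoint from $\bar D$: indeed, by the definition of the hitting time $\gamma(s)\in\Omega\setminus\bar D$ for every $s\leq t$, while $\gamma(0)\in\doo\Omega$ is also outside $\bar D$. Hence $\bar D$ is a compact subset of the open set $\Omega\setminus\gamma([0,t])$, and Proposition~\ref{prop1lin} gives $v_k\to\Phi(\cdot,\gamma(t))$ in $H^1(D)$.

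For the lower bound I would apply inequality~\eqref{eq:estimate-ii} of Lemma~\ref{linLem1} to $f_k$ to obtain
\begin{equation*}
\int_D\abs{\nabla v_k}^2\der x\leq\ip{(\Lambda_D-\Lambda_\emptyset)f_k}{f_k},
\end{equation*}
and take $\limsup_{k\to\infty}$. The left-hand side converges to $\int_D\abs{\nabla\Phi(\cdot,\gamma(t))}^2\der x$ by the $H^1(D)$-convergence of $v_k$, which establishes the stated lower bound for $I(t,\gamma)$.

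For the upper bound I would invoke inequality~\eqref{eq:estimate-i} with data $f_k$,
\begin{equation*}
\ip{(\Lambda_D-\Lambda_\emptyset)f_k}{f_k}\leq\int_D\abs{\nabla v_k}^2\der x-2\int_{\doo D}\frac{\doo w_k}{\doo\nu}\,v_k\der S,
\end{equation*}
where $w_k=u_k-v_k$, and control the boundary integral via Proposition~\ref{proENclo} by $C\aabs{v_k}_{H^1(D)}^2$. After rearranging, this yields $\ip{(\Lambda_D-\Lambda_\emptyset)f_k}{f_k}\leq (1+2C)\aabs{\nabla v_k}_{L^2(D)}^2+2C\aabs{v_k}_{L^2(D)}^2$, so the $H^1(D)$-convergence of $v_k$ and passage to $\limsup_k$ produce the claimed upper bound (with a larger constant depending only on the geometry of $D$).

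The delicate piece is the blow-up assertion. When $\gamma(t)\in\bar D$, the pole of $\Phi(\cdot,\gamma(t))$ lies inside or on the boundary of $D$; using the interior cone condition supplied by the Lipschitz regularity of $\doo D$ (or the fact that $\gamma(t)$ is an interior point) together with an integration in polar coordinates around $\gamma(t)$, one checks that $\int_D\abs{\nabla\Phi(\cdot,\gamma(t))}^2\der x=\infty$. Since $\gamma([0,t])$ has Lebesgue measure zero, the integral over the open set $D\setminus\gamma([0,t])$ is also infinite, and one can exhaust this set by an increasing sequence of compacts $K_m$ with $\int_{K_m}\abs{\nabla\Phi}^2\der x\to\infty$ via monotone convergence. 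On each $K_m$ Proposition~\ref{prop1lin} supplies $v_k\to\Phi$ in $H^1(K_m)$, so $\liminf_{k\to\infty}\int_D\abs{\nabla v_k}^2\der x\geq\int_{K_m}\abs{\nabla\Phi}^2\der x$ for every $m$, whence $\liminf_k\int_D\abs{\nabla v_k}^2\der x=\infty$. Combined with~\eqref{eq:estimate-ii} this forces $I(t,\gamma)=\infty$. The main obstacle is exactly this transfer of the singularity: verifying that the merely local $H^1$-convergence on $\Omega\setminus\gamma([0,t])$ suffices to drive $\aabs{\nabla v_k}_{L^2(D)}$ to infinity, which requires the exhaustion argument together with the cone estimate at $\gamma(t)\in\partial D$.
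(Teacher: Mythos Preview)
Your proposal is correct and follows essentially the same route as the paper: apply Lemma~\ref{linLem1} and Proposition~\ref{proENclo} to the approximating sequence from Proposition~\ref{prop1lin}, use the $H^1(D)$-convergence $v_k\to\Phi(\cdot,\gamma(t))$ (valid since $\bar D\subset\Omega\setminus\gamma([0,t])$) to pass to the limit, and for the blow-up case exhaust $D\setminus\gamma([0,t])$ by compacts on which the local $H^1$-convergence applies. The paper's exhausting sets are the concrete choice $D_\eps=\bar D\setminus\{x:\dist(x,\gamma([0,t]))<\eps\}$, but the argument is otherwise identical to yours.
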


\begin{proof}
Consider a sequence of functions $v_k \in H^1(\Omega)$ such that
\begin{equation}
\begin{cases}
\Delta v_k = 0 & \text{in } \Omega  \\
 v_k = f_k & \text{on } \doo\Omega.
\end{cases}
\end{equation}

Applying proposition~\ref{prop1lin} and trace theorem we obtain
\begin{equation}
v_k \to \Phi(\cdot, \gamma(t)) \ \text{in}\ H^1(D)
\end{equation}
and
\begin{equation}
v_k \to \Phi(\cdot, \gamma(t)) \ \text{in}\ H^{1/2}(\doo D)
\end{equation}
when $\bar D \subset \Omega \setminus \gamma\sulut{[0,t]}$.
Hence the estimates follow from lemma~\ref{linLem1} and proposition \ref{proENclo}.

Suppose next that $\gamma(t) \in \bar D$.
For every $\eps > 0$ define $D_\eps = \bar D \setminus \joukko{x \in \Omega; \dist\sulut{x, \gamma\sulut{[0,\gamma(t)]}}<\eps}$, which is a compact subset of $\Omega \setminus \gamma\sulut{[0,\gamma(t)]}$.
In particular, since the trace of $\gamma$ is compact, each point in its complement has positive distance to it and hence
\begin{equation}
\bigcup_{\eps > 0} D_\eps = \bar D \setminus \gamma\sulut{[0,t]}.
\end{equation}
Thus, by proposition~\ref{prop1lin} and lemma~\ref{linLem1}, we obtain
\begin{equation}
I(t,\gamma) \geq \int_{D_\eps} \abs{\nabla\Phi(x,\gamma(t))}^2\der x
\end{equation}
for every $\eps>0$.
Letting $\eps\to0$, we obtain
\begin{equation}
\begin{split}
I(t,\gamma)
&\geq
\int_{D \setminus \gamma\sulut{[0,t]}} \abs{\nabla\Phi(x,\gamma(t))}^2\der x 
\\&=
\int_{D} \abs{\nabla\Phi(x,\gamma(t))}^2\der x 
=
\infty.
\end{split}
\end{equation}
The integrals converge as $\eps\to0$ by the monotone convergence theorem, we can ignore the trace of~$\gamma$ since it has zero measure, and the last integral is infinite by Lipschitz regularity of~$D$ and the singularity of the fundamental solution $\Phi(\cdot,\gamma(t))$ at~$\gamma(t)$.
\end{proof}

We are now ready to present a proof of theorem~\ref{thm:p2-inverse}.

\begin{proof}[Proof of theorem~\ref{thm:p2-inverse}]
Let us first show that equation~\eqref{eq:T-interval} indeed holds true for any needle~$\gamma$.
The fact that the indicator function~$I(t,\gamma)$ exists and is finite for $t\in(0,t(\gamma;D))$ follows from proposition~\ref{prop:p2-indicator-estimates}.
This shows that $\sulut{0,t\sulut{\gamma;D}} \subset T(\gamma)$.

If $t(\gamma;D)<1$, then $x_0\coloneqq\gamma(t(\gamma;D))\in\partial D$, so
\begin{equation}
\lim_{t\to t(\gamma;D)}\int_D\abs{\nabla\Phi(x,\gamma(t))}^2\der x=\infty.
\end{equation}
Therefore proposition~\ref{prop:p2-indicator-estimates} gives $I(t(\gamma;D),\gamma)=\infty$.
This implies that $\sulut{0,t\sulut{\gamma;D}} \supset T(\gamma)$, which proves the identity~\eqref{eq:T-interval}.

Using this identity one can see that the indicator function determines the point~$\gamma(t(\gamma;D))$ (see discussion after the identity), and the DN map in turn determines the indicator function.
If $t(\gamma;D)=1$, we know that~$\gamma$ does not meet~$\bar D$.
If $t(\gamma;D)<1$, we recover the point $\gamma(t(\gamma;D))\in\partial D$.
Since this can be done for all needles~$\gamma$, we can recover~$D$.
\end{proof}

\section{The direct problem}
\label{sec:direct}

\subsection{Well-posedness}
\label{sec:direct-well-posed}

Let us now carefully formulate the direct problem and see that it is well-posed.

\begin{lemma}
\label{lma:poincare-general}
Let $\Omega\subset\R^n$ be a bounded open set with a finite number of connected components, such that each connected component is a Sobolev extension domain (with exponent~$p$) and the closures of the components are disjoint.
Let $\Gamma\subset\partial\Omega$ be open and meet all connected components of~$\Omega$.
There exists a constant~$C$ so that any function $u\in W^{1,p}(\Omega)$ satisfying $u|_\Gamma=0$ in the Sobolev sense satisfies
\begin{equation}
\aabs{u}_{L^p(\Omega)}
\leq
C\aabs{\nabla u}_{L^p(\Omega)}.
\end{equation}
\end{lemma}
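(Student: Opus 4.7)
The plan is to reduce to a single connected component and then argue by compactness. Because the closures of the components of $\Omega$ are pairwise disjoint, each $\Gamma_i \coloneqq \Gamma \cap \partial\Omega_i$ is relatively open in $\partial\Omega_i$ and, by assumption, nonempty, while $u|_{\Gamma_i} = 0$ whenever $u|_\Gamma = 0$. The $L^p$ and $L^p$-gradient norms decouple across disjoint components, so it suffices to prove the claim on a single $\Omega_i$ with constant $C_i$; the full inequality then holds with $C = \max_i C_i$. I therefore reduce to the case where $\Omega$ is connected, bounded, a Sobolev extension domain of exponent $p$, and $\Gamma \subset \partial\Omega$ is a nonempty relatively open set.

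Suppose the inequality fails. Then there is a sequence $u_k \in W^{1,p}(\Omega)$ with $u_k|_\Gamma = 0$, $\|u_k\|_{L^p(\Omega)} = 1$, and $\|\nabla u_k\|_{L^p(\Omega)} \to 0$. In particular $\{u_k\}$ is bounded in $W^{1,p}(\Omega)$. Using the extension property, extend each $u_k$ to $Eu_k \in W^{1,p}(\R^n)$ with $\|Eu_k\|_{W^{1,p}(\R^n)} \leq C \|u_k\|_{W^{1,p}(\Omega)}$. Fix any bounded open $U \supset \bar\Omega$; by Rellich--Kondrachov a subsequence (not relabelled) satisfies $Eu_k \to v$ strongly in $L^p(U)$ and $Eu_k \rightharpoonup v$ weakly in $W^{1,p}(U)$. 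Restricting to $\Omega$ yields $u_k \to u \coloneqq v|_\Omega$ strongly in $L^p(\Omega)$, whence $\|u\|_{L^p(\Omega)} = 1$, and $\nabla u_k \rightharpoonup \nabla u$ weakly in $L^p(\Omega)$. Combined with $\nabla u_k \to 0$ strongly, this forces $\nabla u = 0$, and connectedness of $\Omega$ then makes $u$ a nonzero constant $c$.

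It remains to rule out such a nonzero constant limit. Interpreting ``$u_k|_\Gamma = 0$ in the Sobolev sense'' as allowing each $u_k$ to be extended by zero across $\Gamma$ to a function $\tilde u_k \in W^{1,p}(\Omega \cup V)$, where $V$ is a fixed open neighbourhood of $\Gamma$ chosen so that $V \setminus \bar\Omega$ has positive measure, the extensions are uniformly bounded in $W^{1,p}(\Omega \cup V)$. Along a further subsequence they converge weakly to some $\tilde u \in W^{1,p}(\Omega \cup V)$ with $\tilde u = c$ on $\Omega$ and $\tilde u = 0$ on $V \setminus \bar\Omega$. Membership of $\tilde u$ in $W^{1,p}(\Omega \cup V)$ precludes such a jump across $\partial\Omega \cap V$, forcing $c = 0$ and the desired contradiction. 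The main obstacle is exactly this last step: for a general Sobolev extension domain the boundary trace need not be defined classically, and the argument requires that the precise notion of vanishing Sobolev trace used here be preserved under weak $W^{1,p}$ limits. Once this compatibility is in place, the rest is the routine compactness-and-contradiction argument.
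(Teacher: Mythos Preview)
Your argument is essentially the paper's: compactness via the Sobolev extension property and Rellich--Kondrachov, normalization, and contradiction with a constant limit. The reduction to a single connected component is a harmless variation; the paper works with all components simultaneously but this changes nothing.

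The one place you diverge, and where you flag an obstacle, is the final step. Here the paper's treatment is cleaner and dissolves your worry. First, the paper takes ``$u|_\Gamma=0$ in the Sobolev sense'' to mean membership in the \emph{closed} subspace $W^{1,p}_{0'}(\Omega)$, the $W^{1,p}$-closure of smooth functions on $\bar\Omega$ supported away from $\Gamma$. Second, and this is the point you overlook, the convergence $u_k\to u$ is actually \emph{strong} in $W^{1,p}(\Omega)$: you already have $u_k\to u$ strongly in $L^p$ and $\nabla u_k\to 0=\nabla u$ strongly in $L^p$. Hence $u\in W^{1,p}_{0'}(\Omega)$ automatically, and no extension-by-zero construction or preservation-under-weak-limits argument is needed. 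The remaining assertion---that a nonzero constant cannot lie in $W^{1,p}_{0'}(\Omega)$---is then a static statement about the space, not a limiting one. So your stated ``main obstacle'' disappears once you notice the convergence is strong; the rest of your proof is correct.
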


\begin{proof}
Let~$W^{1,p}_{0'}(\Omega)$ be the closure under the~$W^{1,p}(\Omega)$ norm of the space of smooth functions in~$\bar\Omega$ supported away from~$\Gamma$.
The prime reminds that the zero boundary value in the Sobolev sense is only assumed on~$\Gamma$ which may be a proper subset of~$\partial\Omega$.

Suppose there was no such constant~$C$.
Then there is a sequence of functions $u_k\in W^{1,p}_{0'}(\Omega)$ so that
\begin{equation}
\label{eq:poincare-contradiction}
\aabs{u_k}_{L^p(\Omega)}
\geq
k\aabs{\nabla u_k}_{L^p(\Omega)}
>
0.
\end{equation}
We may normalize this sequence so that $\aabs{u_k}_{L^p(\Omega)}=1$ and $\aabs{\nabla u_k}_{L^p(\Omega)}<1/k$.
By the Rellich--Kondrachov theorem there is a subsequence converging in~$L^p(\Omega)$.
The Rellich-Kondrachov theorem holds in Sobolev extension domains, and a finite union of Sobolev extension domains with positive distance from each other also admits an extension operator.
We denote the subsequence by~$(u_k)$ and the limit function by~$u$.

For any test function $\eta\in C_0^\infty(\Omega)$ we have
\begin{equation}
\int_\Omega u\nabla\eta
=
\lim_{k\to\infty}\int_\Omega u_k\nabla\eta
=
-\lim_{k\to\infty}\int_\Omega \eta\nabla u_k
=
0,
\end{equation}
using~$W^{1,p}$ regularity of each~$u_k$ and the norm bound on~$\nabla u_k$.
Therefore~$u$ is weakly differentiable and its weak gradient is identically zero.

We have in fact $u\in W^{1,p}(\Omega)$ and the convergence $u_k\to u$ happens also in~$W^{1,p}(\Omega)$, so $u\in W^{1,p}_{0'}(\Omega)$.
Since~$u$ has zero weak gradient, it must be constant on each connected component.
The only possible constant value is zero due to the zero boundary value on~$\Gamma$ and the connectedness assumption.
This contradicts the normalization $\norm{u_k}_{L^p(\Omega)} = 1$ for all~$k$.
\end{proof}

\begin{lemma}
\label{lma:poincare}
Let $\omega,\Omega\subset\R^n$ be bounded domains with Lipcshitz boundaries.
Assume that the closure of each connected component of~$\Omega\setminus\bar\omega$ meets~$\partial\Omega$.
(This happens, in particular, if $\bar\omega\subset\Omega$ and~$\Omega\setminus\bar\omega$ is connected.)
There exists a constant~$C$ so that
\begin{equation}
\aabs{u}_{L^p(\Omega\setminus\bar\omega)}
\leq
C\aabs{\nabla u}_{L^p(\Omega\setminus\bar\omega)}
\end{equation}
for all $u\in W^{1,p}(\Omega\setminus\bar\omega)$ that vanish (in the Sobolev sense) on~$\partial\Omega$.
\end{lemma}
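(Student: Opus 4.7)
The plan is to deduce this from lemma~\ref{lma:poincare-general} applied to the open set $U = \Omega \setminus \bar\omega$ with $\Gamma = \partial\Omega \cap \partial U$.

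First I would verify that $U$ is a bounded open set with finitely many connected components. Boundedness is inherited from $\Omega$. Since $\partial\Omega$ and $\partial\omega$ are compact Lipschitz submanifolds, each has only finitely many components, which forces $U$ to have only finitely many connected components as well. Each such component is a bounded domain whose boundary is built from Lipschitz pieces of $\partial\Omega$ and $\partial\omega$, so every component is itself a bounded Lipschitz domain and hence a Sobolev extension domain for every exponent $p$.

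Next I would check the disjoint-closures hypothesis. At any point $x \in \partial\omega \cap \Omega$, the Lipschitz structure of $\partial\omega$ means that under a bi-Lipschitz local chart $U$ coincides with a half-space near $x$, which is connected; hence two distinct connected components of $U$ cannot share a boundary point lying in $\Omega$. Their closures can therefore only possibly meet on $\partial\Omega$. In the main case $\bar\omega\subset\Omega$ with $\Omega\setminus\bar\omega$ connected there is only one component and the condition is trivial, and in the general case one can separate the components by exhausting $\Omega$ from the inside by Lipschitz subdomains $\Omega_k\uparrow\Omega$, proving the inequality on $\Omega_k\setminus\bar\omega$ uniformly in $k$ and passing to the limit.

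Finally, the hypothesis that each component's closure meets $\partial\Omega$ translates directly into the statement that $\Gamma$ meets every connected component of $U$, and since $u|_{\partial\Omega}=0$ in the Sobolev sense it in particular vanishes on $\Gamma\subset\partial\Omega$. Lemma~\ref{lma:poincare-general} then applies and yields the claimed Poincaré inequality with a constant $C$ independent of $u$. The main obstacle is purely geometric, namely the verification that the components of $U$ have pairwise disjoint closures; the analytic content (Rellich--Kondrachov plus the contradiction argument) is entirely contained in the previous lemma.
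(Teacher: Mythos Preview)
Your proposal is correct and takes essentially the same approach as the paper: both deduce the lemma directly from lemma~\ref{lma:poincare-general} applied to the open set $\Omega\setminus\bar\omega$, using that Lipschitz domains are Sobolev extension domains. The paper's own proof is in fact only two sentences and does not spell out the verification of the auxiliary hypotheses (finitely many components, each an extension domain, disjoint closures, $\Gamma$ open and meeting every component) that you carefully work through.
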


\begin{proof}
This is a special case of lemma~\ref{lma:poincare-general}.
In particular, Lipschitz domains are Sobolev extension domains~\cite[theorem 12]{Calderon:1961}.
\end{proof}

\begin{lemma}
\label{lma:p-harmonic-trace-estimate}
Let $\Omega\subset\R^n$ be a bounded domain.
Any $p$-harmonic function $u\in W^{1,p}(\Omega)$ satisfies
\begin{equation}
\aabs{\nabla u}_{L^p(\Omega)}
\leq
\aabs{u|_{\partial\Omega}}_{W^{1,p}(\Omega)/W^{1,p}_0(\Omega)}.
\end{equation}
\end{lemma}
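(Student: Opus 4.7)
The plan is to exploit the variational characterization of $p$-harmonic functions. Recall that a $p$-harmonic function $u\in W^{1,p}(\Omega)$ (the case $\sigma\equiv 1$ of~\eqref{eq:pde}) is precisely a minimizer of the Dirichlet $p$-energy $v\mapsto \int_\Omega |\nabla v|^p\,\der x$ among all $v\in W^{1,p}(\Omega)$ with $v-u\in W^{1,p}_0(\Omega)$. This follows from theorem~\ref{thm:direct-summary} applied to $\sigma\equiv 1$, or from classical theory of $p$-harmonic functions.

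Next, I would unpack the quotient norm on the right-hand side: by definition,
\begin{equation*}
\bigl\|u|_{\partial\Omega}\bigr\|_{W^{1,p}(\Omega)/W^{1,p}_0(\Omega)}
=
\inf\bigl\{\|v\|_{W^{1,p}(\Omega)} : v\in W^{1,p}(\Omega),\; v-u\in W^{1,p}_0(\Omega)\bigr\}.
\end{equation*}
Fix any competitor $v$ in this set. Since $v$ has the same trace as $u$, the minimization property of $u$ yields $\|\nabla u\|_{L^p(\Omega)}\leq \|\nabla v\|_{L^p(\Omega)}$. The trivial estimate $\|\nabla v\|_{L^p(\Omega)}\leq \|v\|_{W^{1,p}(\Omega)}$ then gives $\|\nabla u\|_{L^p(\Omega)}\leq \|v\|_{W^{1,p}(\Omega)}$.

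Taking the infimum over all admissible $v$ on the right-hand side produces exactly the claimed inequality. There is no real obstacle here: the only subtle point is citing the energy-minimizing property of $p$-harmonic functions (with $\sigma\equiv 1$), which the paper has already established in theorem~\ref{thm:direct-summary}; everything else is a one-line comparison between the Dirichlet energy and the full $W^{1,p}$ norm followed by passing to the infimum in the definition of the trace-quotient norm.
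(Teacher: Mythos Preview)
Your argument is correct and matches the paper's proof essentially line for line: the paper also writes out the quotient norm as an infimum, bounds $\|v\|_{W^{1,p}(\Omega)}$ below by $\|\nabla v\|_{L^p(\Omega)}$, and then uses that a $p$-harmonic function minimizes the $L^p$ norm of the gradient among competitors with the same trace. The only cosmetic difference is that the paper compresses these steps into a single displayed chain of inequalities and invokes the energy-minimizing property ``by definition'' rather than via theorem~\ref{thm:direct-summary}.
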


\begin{proof}
We simply observe that
\begin{equation}
\begin{split}
\aabs{u|_{\partial\Omega}}_{W^{1,p}(\Omega)/W^{1,p}_0(\Omega)}
&=
\inf_{v-u\in W^{1,p}_0(\Omega)}\aabs{v}_{W^{1,p}(\Omega)}
\\&\geq
\inf_{v-u\in W^{1,p}_0(\Omega)}\aabs{\nabla v}_{L^p(\Omega)}
\\&=
\aabs{\nabla u}_{L^p(\Omega)},
\end{split}
\end{equation}
since a $p$-harmonic function minimizes, by definition, the~$L^p$ norm of the gradient.
\end{proof}

\begin{theorem}
\label{thm:direct-variation}
Let $\Omega\subset\R^n$ be a bounded domain and let $\sigma\colon\Omega\to[0,\infty]$ be a measurable function.
Denote $D_0=\sigma^{-1}(0)$ and $D_\infty=\sigma^{-1}(\infty)$.
Suppose the following:
\begin{itemize}
\item The sets~$D_0$ and~$D_\infty$ are open.
\item The sets~$\bar D_0$, $\bar D_\infty$ and~$\partial\Omega$ are disjoint.
\item The function~$\log\sigma$ is essentially bounded in $\Omega\setminus (D_0\cup D_\infty)$.
\item The set~$D_0$ has Lipschitz boundary.
\end{itemize}
Fix $p\in(1,\infty)$.
Given any $f\in W^{1,p}(\Omega)$, there is a minimizer $u\in f+W^{1,p}_0(\Omega)$ to the energy
\begin{equation}
E(u)
=
\int_\Omega\sigma\abs{\nabla u}^p \der x.
\end{equation}
The minimal energy is finite and the minimizer is unique modulo functions that have zero gradient outside~$D_0$ but still satisfy the Dirichlet boundary condition on~$\doo \Omega$.
The minimizer satisfies $\dive(\sigma\abs{\nabla u}^{p-2}\nabla u)=0$ in $\Omega\setminus(\bar D_0\cup\bar D_\infty)$ and $\nabla u=0$ in~$D_\infty$ in the weak sense.
\end{theorem}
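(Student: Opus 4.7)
The plan is to apply the direct method of the calculus of variations. The principal obstruction is that $E$ is not coercive on $f+W^{1,p}_0(\Omega)$: the integrand $\sigma|\nabla u|^p$ gives no control on $D_0$ (where $\sigma=0$), while any competitor with finite energy must satisfy the hard constraint $\nabla u=0$ on $D_\infty$ (where $\sigma=\infty$). Throughout, I would work on the admissible set
\[
\mathcal{A} \coloneqq \joukko{w \in f + W^{1,p}_0(\Omega) : \nabla w = 0 \text{ a.e.\ in } D_\infty},
\]
which is convex and weakly closed in $W^{1,p}(\Omega)$.

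First I would establish $\inf_{\mathcal{A}} E < \infty$. Because $\bar D_\infty$ is compactly contained in $\Omega\setminus\bar D_0$, a smooth cutoff around each component of $\bar D_\infty$ lets me replace $f$ by a locally constant value there without touching $\partial\Omega$ or $\bar D_0$, producing $v\in\mathcal{A}$ whose gradient is supported in $\Omega\setminus(\bar D_0\cup\bar D_\infty)$, where $\sigma$ is essentially bounded, so that $E(v)<\infty$.

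Now take a minimizing sequence $(u_k)\subset\mathcal{A}$. The essential lower bound $\sigma\geq c>0$ on $\Omega\setminus(\bar D_0\cup\bar D_\infty)$ yields a uniform $L^p$ bound on $\nabla u_k$ there, while $\nabla u_k=0$ on $D_\infty$. To gain control on $D_0$, I would modify $u_k$ on each connected component $D_0^j$ by replacing it with the $p$-harmonic function sharing its trace on $\partial D_0^j$. The Lipschitz regularity of $\partial D_0$ provides the requisite trace and extension theory; combined with lemma~\ref{lma:p-harmonic-trace-estimate}, the $L^p$ norm of $\nabla\tilde u_k$ inside $D_0^j$ is bounded by a quotient trace norm, which in turn is controlled by the already bounded $W^{1,p}$ norm of $u_k$ on a neighborhood of $\partial D_0^j$ outside $D_0$. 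The modified sequence $\tilde u_k$ has identical boundary values and (since $\sigma\equiv0$ on $D_0$) identical energy, and after applying the Poincar\'e inequality of lemma~\ref{lma:poincare-general} to $\tilde u_k - v\in W^{1,p}_0(\Omega)$ I obtain a uniform $W^{1,p}(\Omega)$ bound. Passing to a weakly convergent subsequence $\tilde u_k\rightharpoonup u$, the weak closedness of $\mathcal{A}$ places $u\in\mathcal{A}$, and the classical weak lower semicontinuity of $w\mapsto\int_{\Omega\setminus(\bar D_0\cup\bar D_\infty)}\sigma|\nabla w|^p\, \der x$ (nonnegative convex integrand with bounded $\sigma$) shows that $u$ is a minimizer.

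Strict convexity of $y\mapsto|y|^p$ for $1<p<\infty$ yields the stated uniqueness: if $u_1, u_2\in\mathcal{A}$ are both minimizers, then equality in the convexity estimate applied to $\tfrac12(u_1+u_2)$ forces $\nabla u_1=\nabla u_2$ a.e.\ on $\joukko{\sigma>0}=\Omega\setminus D_0$, so that $u_1-u_2\in W^{1,p}_0(\Omega)$ has vanishing gradient outside $D_0$. For the Euler--Lagrange equation, any $\phi\in W^{1,p}_0(\Omega)$ with $\nabla\phi=0$ on $D_\infty$ gives an admissible perturbation $u+t\phi\in\mathcal{A}$; differentiating $t\mapsto E(u+t\phi)$ at $t=0$ under the integral (justified by dominated convergence, since $\sigma|\nabla u|^{p-1}|\nabla\phi|$ is controlled on $\Omega\setminus(D_0\cup D_\infty)$ and vanishes on $D_\infty$ by choice of $\phi$ and on $D_0$ because $\sigma=0$) yields
\[
\int_{\Omega\setminus(D_0\cup D_\infty)}\sigma|\nabla u|^{p-2}\nabla u\cdot\nabla\phi\, \der x = 0,
\]
and restricting to $\phi\in C_c^\infty(\Omega\setminus(\bar D_0\cup\bar D_\infty))$ extracts the weak PDE stated in the theorem, while the condition $\nabla u=0$ on $D_\infty$ is built into admissibility. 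The delicate step is the modification on $D_0$: one must extract uniform Sobolev control on a set on which the energy is completely blind, and it is exactly the Lipschitz hypothesis on $\partial D_0$ that makes lemma~\ref{lma:p-harmonic-trace-estimate} applicable componentwise.
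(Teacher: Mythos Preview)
Your strategy is the paper's strategy: restrict to competitors with $\nabla u=0$ on $D_\infty$, replace a minimizing sequence on $D_0$ by its $p$-harmonic extension, invoke lemma~\ref{lma:p-harmonic-trace-estimate} together with trace/extension theory on the Lipschitz set $D_0$ to control the interior gradient, extract a weak limit, and appeal to weak lower semicontinuity and strict convexity. So the architecture is right.

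There is, however, a small but genuine ordering gap. You write that the quotient trace norm on $\partial D_0^j$ is ``controlled by the already bounded $W^{1,p}$ norm of $u_k$ on a neighborhood of $\partial D_0^j$ outside $D_0$''. At that stage only $\|\nabla u_k\|_{L^p(\Omega\setminus D_0)}$ is bounded by the energy; you have no a priori control on $\|u_k\|_{L^p}$ near $\partial D_0$, and your Poincar\'e step comes only \emph{after} the modification, applied on all of $\Omega$ to $\tilde u_k-v$ --- which in turn presupposes the gradient bound inside $D_0$ that you are trying to establish. The paper breaks this circularity by first applying the Poincar\'e inequality of lemma~\ref{lma:poincare} on $\Omega\setminus\bar D_0$ to $u_k-f\in W^{1,p}_0(\Omega)$, obtaining
\[
\|u_k-f\|_{W^{1,p}(\Omega\setminus D_0)}\le C\|\nabla(u_k-f)\|_{L^p(\Omega\setminus D_0)},
\]
and only then passing to the trace on $\partial D_0$ and to the $p$-harmonic replacement; this yields the key estimate $\|\nabla\bar u\|_{L^p(D_0)}\le C\|\nabla u\|_{L^p(\Omega\setminus D_0)}$ for $u\in W^{1,p}_0(\Omega)$ directly. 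Note also that lemma~\ref{lma:poincare} requires every component of $\Omega\setminus\bar D_0$ to meet $\partial\Omega$; the paper handles the remaining components separately (the minimizer is constant there and can be normalized to zero), a case your proposal does not address. Once you reorder these two steps and treat those stray components, your argument coincides with the paper's.
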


Note that the essential boundedness of $\log \sigma$ is equivalent to the existence of $c>0$ such that for almost all $x$ we have $1/c < \sigma(x) < c$.

\begin{remark}
The result which states that the minimizer is unique modulo functions that have zero gradient outside~$D_0$ is simplified when the closure of each connected component of $\Omega \setminus D_0$ intersects $\doo \Omega$ and in particular when $\Omega \setminus D_0$ is connected.
Under this assumption the uniqueness holds modulo $W^{1,p}_0(D_0)$.
\end{remark}

The minimizer can be made unique with a small number of adjustments.
\begin{remark}
Suppose~$u$ and~$v$ are any two minimizers of the energy in theorem~\ref{thm:direct-variation}.
Define~$u_0$ to equal~$u$ in all connected components of $\Omega \setminus D_0$ that touch the boundary~$\doo \Omega$, set $u_0 = 0$ in connected components of $\Omega \setminus D_0$ that do not touch the boundary~$\doo \Omega$ and let~$u_0$ be $p$-harmonic in~$D_0$ with Dirichlet boundary values determined by the previous conditions.
Define~$v_0$ in a similar way, but based on~$v$.
Then $u_0 = v_0$ in $\Omega$ and $\nabla\sulut{u-u_0} = \nabla\sulut{v-v_0} = 0$ in $\Omega \setminus D_0$.
\end{remark}

\begin{proof}[Proof of theorem~\ref{thm:direct-variation}]
First of all, the energy~$E(u)$ is finite if and only if $\nabla u=0$ in~$D_\infty$; in fact, for such functions we have
\begin{equation}
E(u) = \int_{\Omega \setminus D_\infty} \sigma \abs{\nabla u}^p \der x.
\end{equation}
(We use the convention $0\cdot\infty = 0$.)
Since~$\bar D_\infty$ is disjoint from~$\partial\Omega$, there are such functions with the prescribed boundary values.
The space $A=\{u\in W^{1,p}(\Omega);\nabla u|_{D_\infty}=0\}$ is a closed subspace of~$W^{1,p}(\Omega)$, and so is $B=\{u\in W^{1,p}_0(\Omega);\nabla u|_{\Omega\setminus D_0}=0\}$.

It is clear that changing the function~$u$ in~$D_0$ does not change~$E(u)$.
Therefore we consider the quotient space
\begin{equation}
S
=
A/B
.
\end{equation}
The energy functional~$E$ is well defined on this quotient space and $E(u)<\infty$ for all $u\in S$.

Since the only thing that matters about~$f$ are its boundary values and the sets~$\doo \Omega$, $\bar D_0$, and~$\bar D_\infty$ are disjoint, we may assume that~$f$ vanishes on~$D_0$ and~$D_\infty$.

We denote the equivalence of $u\in A$ by~$[u]=u+B$.
Let us define $S_f=\{[u]\in S;u-f\in W^{1,p}_0(\Omega)\}$.
Notice that the truth value of $u-f\in W^{1,p}_0(\Omega)$ does not depend on the choice of the representative of the equivalence class $[u]\subset A$, so~$S_f$ is well-defined.
We will also use the space~$S_0$ where the boundary value is assumed to be zero instead of that of~$f$.
We shall show that there is a unique minimizer of~$E$ in~$S_f$.

Let~$U$ be a connected component of~$\Omega\setminus\bar D_0$ so that $U\cap\partial\Omega=\emptyset$.
Then one can shift the values of $u\in S$ in~$U$ by a constant without changing~$u$ as an element of~$S$.
A minimizer must clearly have vanishing gradient in~$U$, so we may assume that the minimizer (and all functions in a minimizing sequence) vanishes in~$U$.
We therefore assume, to the end of simplifying presentation, from now on that there are no such components~$U$.

For any $u\in S$ we pick a preferred representative $\bar u\in W^{1,p}(\Omega)$ by demanding~$\bar u$ to be $p$-harmonic in~$D_0$.
The boundary values of~$\bar u$ on~$\partial D_0$ are determined by~$u$.

From lemma~\ref{lma:p-harmonic-trace-estimate} we obtain
\begin{equation}
\aabs{\nabla\bar u}_{L^p(D_0)}
\leq
\aabs{\bar u|_{\partial D_0}}_{W^{1,p}(D_0)/W^{1,p}_0(D_0)}.
\end{equation}
By continuity of the quotient map and lemma~\ref{lma:poincare} we have
\begin{equation}
\aabs{u|_{\partial D_0}}_{W^{1,p}(\Omega\setminus D_0)/W^{1,p}_0(\Omega\setminus D_0)}
\leq
C\aabs{\nabla u}_{L^p(\Omega\setminus D_0)}
\end{equation}
for all $u\in W^{1,p}_0\sulut{\Omega}$.
Since the boundary norms on~$\partial D_0$ from different sides are comparable --- in fact both are comparable to the Besov norm on~$B^{1-1/p}_{p,p}(\partial D_0)$ --- we have, for all $u\in W^{1,p}_0\sulut{\Omega}$,
\begin{equation}
\label{eq:int-ext-estimate-s0}
\aabs{\nabla\bar u}_{L^p(D_0)}
\leq
C
\aabs{\nabla u}_{L^p(\Omega\setminus D_0)}.
\end{equation}

Let now~$(u_k)$ be a minimizing sequence of~$E$ in~$S_f$.
Using the estimate~\eqref{eq:int-ext-estimate-s0} and $u_k-f\in S_0$, we get
\begin{equation}
\aabs{\nabla(u_k-f)}_{L^p(\Omega\setminus D_0)}
\geq
C\aabs{\nabla(\overline{u_k-f})}_{L^p(D_0)},
\end{equation}
so for some other constant~$C'$ we have
\begin{equation}
\aabs{\nabla(u_k-f)}_{L^p(\Omega\setminus D_0)}
\geq
C'\aabs{\nabla(\overline{u_k-f})}_{L^p(\Omega)}.
\end{equation}
It follows from the assumption on~$\log\sigma$ that $\sigma|_{\Omega\setminus D_0}\geq\lambda$ for some $\lambda>0$.
Therefore
\begin{equation}
\begin{split}
\lambda^{-1/p}E(u_k)^{1/p}
&\geq
\aabs{\nabla u_k}_{L^p(\Omega\setminus D_0)}
\\&\geq
\aabs{\nabla(u_k-f)}_{L^p(\Omega\setminus D_0)}-\aabs{\nabla f}_{L^p(\Omega\setminus D_0)}
\\&\geq
C'\aabs{\nabla(\overline{u_k-f})}_{L^p(D_0)}-\aabs{\nabla f}_{L^p(\Omega\setminus D_0)}
.
\end{split}
\end{equation}
Since~$E(u_k)$ is bounded, this estimate guarantees that also the sequence $(\overline{u_k-f})$ is bounded in $W^{1,p}_0(\Omega)$.
Therefore there is a subsequence (which we denote by the sequence itself) which converges weakly in~$W^{1,p}(\Omega)$.
Let $u_0-f$ be the limit function.

The energy functional~$E(u)$ is just a weighted Dirichlet energy of $u|_{\Omega\setminus(D_0\cup D_\infty)}$ with weight bounded away from zero and infinity.
Such functionals are weakly lower semicontinuous and $\overline{u_k-f}$ converges weakly to $u_0-f$ also in $W^{1,p}(\Omega\setminus(D_0\cup D_\infty))$ (with the functions restricted appropriately).
Therefore
\begin{equation}
E(u_0)
\leq
\lim_{k\to\infty}E(u_k),
\end{equation}
so~$u_0$ indeed minimizes the energy.

Suppose there are two minimizers~$u_0$ and~$v_0$ of~$E$ in~$S_f$.
Since the functions are distinct, they must differ in $\Omega\setminus(D_0\cup D_\infty)$, and so the set $\{x\in\Omega\setminus(D_0\cup D_\infty);\nabla u_0(x)\neq\nabla v_0(x)\}$ must have positive measure.
By strict convexity of $t\mapsto t^p$ this implies $E(\frac12u_0+\frac12v_0)<\frac12E(u_0)+\frac12E(v_0)$.
But this is impossible because~$u_0$ and~$v_0$ minimize the energy, so $u_0=v_0$ as elements in~$S_f$.

The fact that our unique minimizer solves the PDE follows from standard techniques in the calculus of variations.
\end{proof}

We remark that the weak limit of a locally uniformly bounded sequence of $p$-harmonic functions is $p$-harmonic.\footnote{Every locally uniformly bounded family of weak solutions to the $p$-Laplace equation is equicontinuous~\cite[Theorem~6.12]{Heinonen:Kilpelainen:Martio:1993} and thus by the Ascoli-Arzel\`a theorem has a uniformly convergent subsequence.
A locally uniform limit of solutions is still a solution~\cite[Theorem~3.78]{Heinonen:Kilpelainen:Martio:1993}.
A uniform limit is also a weak~$W^{1,p}$ limit.
}
Therefore the minimizer constructed in the proof above is $p$-harmonic in~$D_0$ at least if~$f$ is bounded.
It is also $p$-harmonic (weighted with~$\sigma$) in $\Omega\setminus(D_0\cup D_\infty)$, but it need not be $p$-harmonic in any sense across~$\partial D_0$.
Using $p$-harmonic extensions to~$D_0$ was just a matter of convenience; the values in~$D_0$ are irrelevant for the energy.
All minimizers are $p$-harmonic in~$D_\infty$ since the gradient must vanish identically there.

The variational problem also leads to a PDE in the whole domain~$\Omega$ despite~$\sigma$ being zero or infinite as we shall see next, as we provide a weak formulation for the equation $\dive(\sigma\abs{\nabla u}^{p-2}\nabla u)=0$.

\begin{theorem}
\label{thm:direct-pde}
Let~$\Omega$ and~$\sigma$ be as in theorem~\ref{thm:direct-variation}.
Fix any $f\in W^{1,p}(\Omega)$ and only consider the functions satisfying the constraint $u-f\in W^{1,p}_0(\Omega)$.
Such a function $u\in W^{1,p}(\Omega)$ minimizes the energy functional~$E$ if and only if $\nabla u=0$ in~$D_\infty$ and
\begin{equation}
\label{eq:euler-lagrange}
\int_\Omega\sigma\abs{\nabla u}^{p-2}\nabla u\cdot\nabla\phi=0
\end{equation}
for all $\phi\in W^{1,p}_0(\Omega)$ satisfying $\nabla\phi=0$ on~$D_\infty$.
\end{theorem}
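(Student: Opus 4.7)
The plan is to establish the equivalence by the standard Euler--Lagrange argument, taking care that the constraint $\nabla u = 0$ on $D_\infty$ and the degeneracy of $\sigma$ on $D_0 \cup D_\infty$ behave well. Throughout, we interpret the integrand $\sigma\abs{\nabla u}^{p-2}\nabla u \cdot \nabla \phi$ as $0$ on $D_0$ (where $\sigma = 0$) and on $D_\infty$ (where we use the standing convention that $\nabla \phi = 0$ forces the product to be $0$); the integrand is therefore supported in $\Omega \setminus (D_0 \cup D_\infty)$ where, by assumption on $\log \sigma$, the weight $\sigma$ is bounded above and below by positive constants.

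First I would dispatch the minimizer-to-PDE direction. Let $u$ be a minimizer. By theorem~\ref{thm:direct-variation} the minimum is finite, so $\nabla u = 0$ on $D_\infty$. Given any admissible test function $\phi \in W^{1,p}_0(\Omega)$ with $\nabla \phi = 0$ on $D_\infty$, the perturbation $u + t\phi$ has the same Dirichlet data as $u$ and satisfies $\nabla(u+t\phi) = 0$ on $D_\infty$, hence is admissible in $f + W^{1,p}_0(\Omega)$ with finite energy. The real-valued function
\begin{equation}
g(t) = E(u+t\phi) = \int_{\Omega\setminus(D_0 \cup D_\infty)} \sigma \abs{\nabla u + t\nabla \phi}^p \der x
\end{equation}
has a minimum at $t = 0$. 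Differentiating under the integral is justified by the pointwise derivative $p\sigma\abs{\nabla u + t\nabla \phi}^{p-2}(\nabla u + t\nabla\phi)\cdot \nabla \phi$, a local $L^\infty$ bound on $\sigma$, and a standard dominated-convergence argument using the inequality $\bigl\lvert \abs{a+tb}^p - \abs{a}^p \bigr\rvert \leq C(\abs{a}^{p-1} + \abs{b}^{p-1})\abs{b}\abs{t}$ on $[-1,1]$. Setting $g'(0) = 0$ yields~\eqref{eq:euler-lagrange} (up to the harmless factor $p$).

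For the converse, suppose $u \in f + W^{1,p}_0(\Omega)$ satisfies $\nabla u = 0$ on $D_\infty$ and~\eqref{eq:euler-lagrange}. Let $v \in f + W^{1,p}_0(\Omega)$ be arbitrary. If $E(v) = \infty$ there is nothing to prove, so assume $E(v) < \infty$, which forces $\nabla v = 0$ on $D_\infty$. Set $\phi = v - u$; then $\phi \in W^{1,p}_0(\Omega)$ and $\nabla \phi = 0$ on $D_\infty$, so $\phi$ is an admissible test function. The strict convexity of $\xi \mapsto \abs{\xi}^p$ on $\R^n$ gives the pointwise inequality
\begin{equation}
\abs{\nabla v}^p \geq \abs{\nabla u}^p + p \abs{\nabla u}^{p-2} \nabla u \cdot (\nabla v - \nabla u).
\end{equation}
Multiplying by $\sigma$ and integrating over $\Omega \setminus (D_0 \cup D_\infty)$ (equivalently over $\Omega$, with our conventions), and using~\eqref{eq:euler-lagrange} applied to $\phi = v - u$, we conclude $E(v) \geq E(u)$.

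The main potential obstacle is purely bookkeeping: verifying that the product $\sigma\abs{\nabla u}^{p-2}\nabla u \cdot \nabla \phi$ is genuinely in $L^1(\Omega)$ and that the differentiation step is legitimate despite $\sigma$ being unbounded. Both issues are resolved by restricting to $\Omega \setminus (D_0 \cup D_\infty)$, where $\sigma$ is bounded between two positive constants, and by using Hölder's inequality to bound $\int \sigma \abs{\nabla u}^{p-1}\abs{\nabla \phi} \leq \aabs{\sigma}_{L^\infty}\aabs{\nabla u}_{L^p}^{p-1}\aabs{\nabla \phi}_{L^p}$ on this set. No further analysis of the behavior near $\partial D_0$ or $\partial D_\infty$ is required, because the admissibility conditions $\nabla u = \nabla \phi = 0$ on $D_\infty$ and the vanishing of $\sigma$ on $D_0$ render those regions invisible to the computation.
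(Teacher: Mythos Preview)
Your proposal is correct and follows essentially the same route as the paper: differentiate $t\mapsto E(u+t\phi)$ at $t=0$ (justified by dominated convergence) to obtain~\eqref{eq:euler-lagrange}, and use the pointwise convexity inequality $\abs{\xi}^p\ge\abs{\eta}^p+p\abs{\eta}^{p-2}\eta\cdot(\xi-\eta)$ integrated against~$\sigma$ for the converse. Your version is slightly more explicit about the bookkeeping (the $E(v)=\infty$ case, the H\"older bound showing the integrand is in $L^1$), but there is no substantive difference in method.
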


\begin{proof}
Let us again denote $A=\{u\in W^{1,p}(\Omega);\nabla u|_{D_\infty}=0\}$ and in addition $A_0=A\cap W^{1,p}_0(\Omega)$.
Suppose~$u$ is a minimizer of~$E$ in $\tilde f+A_0$, where $f-\tilde f \in W^{1,p}_0\sulut{\Omega}$ and $\nabla \tilde f = 0$ in~$D_\infty$.
Take any $\phi\in A_0$.
Now
\begin{equation}
\begin{split}
0
&=
\left.\Der{t}E(u+t\phi)\right\rvert_{t=0}
\\&=
\left.\Der{t}\int_\Omega\sigma\abs{\nabla(u+t\phi)}^p\right\rvert_{t=0}
\\&=
\int_\Omega\sigma\left.\Der{t}\abs{\nabla(u+t\phi)}^p\right\rvert_{t=0}
\\&=
p\int_\Omega\sigma\abs{\nabla u}^{p-2}\nabla u\cdot\nabla\phi.
\end{split}
\end{equation}
Commuting differentiation and integration is possible by the dominated convergence theorem and the mean value theorem for the map $t\mapsto\abs{\nabla(u+t\phi)}^p$.

Conversely, suppose~\eqref{eq:euler-lagrange} holds for all $\phi\in A_0$.
The map $\R^n\ni\xi\mapsto\abs{\xi}^p\in\R$ is convex and smooth outside the origin, so
\begin{equation}
\abs{\nabla u(x)}^p+p\abs{\nabla u(x)}^{p-2}\nabla u(x)\cdot\nabla\phi(x)
\leq
\abs{\nabla (u+\phi)(x)}^p
\end{equation}
for all $x\in\Omega$.
Integrating this with weight~$\sigma$ and using~\eqref{eq:euler-lagrange}, we have $E(u)\leq E(u+\phi)$ for all $\phi\in A_0$.
\end{proof}

We also have a result for general~$\phi$ -- not just $\phi\in A_0$ -- by formal integration by parts.
\begin{remark} \label{remark:EL-boundary}
The minimizer of theorem~\ref{thm:direct-pde} also satisfies\footnote{The unit normal~$\nu$ is pointing outward from the perspective of $\Omega\setminus D_\infty$. This means towards the interior of~$D_\infty$.}
\begin{equation}
\label{eq:euler-lagrange-boundary}
\int_\Omega\sigma\abs{\nabla u}^{p-2}\nabla u\cdot\nabla\phi
=
\int_{\partial D_\infty}\sigma\abs{\nabla u}^{p-2}(\partial_\nu u)\phi
\end{equation}
for any $\phi\in W^{1,p}_0(\Omega)$,
provided that the integral over the boundary is well-defined.
This is true, for example, when $\sigma \in C\sulut{\Omega \setminus \sulut{D_0 \cup D_\infty}}$.
\end{remark}

\begin{remark}
\label{rmk:weak-pde-unique}
Given the boundary values, a minimizer~$u$ of the energy~$E$ is not unique, but the function $\sigma\abs{\nabla u}^{p-2}\nabla u$ is unique.
This is the only quantity appearing in the weak Euler--Lagrange equation~\eqref{eq:euler-lagrange}.
In fact, this function is in $L^{p'}(\Omega;\R^n)$ because $\nabla u=0$ whenever $\sigma=\infty$, with the conjugate exponent $p'$ defined by
\begin{equation}
\frac{1}{p} + \frac{1}{p'} = 1.
\end{equation}
\end{remark}

\begin{remark}
\label{rmk:pde-bdy}
We can also formulate the partial differential equation $\dive(\sigma\abs{\nabla u}^{p-2}\nabla u)=0$ (or in weak form~\eqref{eq:euler-lagrange}) in the domain $\Omega\setminus(\bar D_0\cup\bar D_\infty)$.
We only need to find the correct boundary conditions on~$\partial D_0$ and~$\partial D_\infty$.

Let~$C$ be a connected component of~$D_\infty$.
Since~$\nabla u$ must vanish in~$C$,~$u$ is constant on~$C$, but there is also another condition.
We can choose a function $\phi\in C^\infty_0$ so that $\phi\equiv1$ on~$C$ and $\phi\equiv0$ in $D_\infty\setminus C$.
Comparing the two equations in theorem~\ref{thm:direct-pde} (or integrating by parts), we observe that $\int_{\partial C}\sigma\abs{\nabla u}^{p-2}\partial_\nu u=0$.

Let us then find the boundary conditions on~$\partial D_0$.
To that end, we take an arbitrary test function $\phi\in C^\infty_0(\Omega)$ vanishing in~$D_\infty$ for weak Euler--Lagrange equation~\eqref{eq:euler-lagrange}.
Since~$\sigma$ vanishes in~$D_0$, the integral over~$\Omega$ in~\eqref{eq:euler-lagrange} is in fact an integral over~$\Omega\setminus D_0$.
Integration by parts gives $\int_{\partial D_0}\sigma\abs{\nabla u}^{p-2}(\partial_\nu u)\phi=0$.
If this is to hold for all such~$\phi$, we obtain the Neumann boundary condition $\sigma\abs{\nabla u}^{p-2}\partial_\nu u=0$ on~$\partial D_0$.

Therefore the equation of theorem~\ref{thm:direct-pde} can be reformulated as
\begin{equation}
\label{eq:traditional_direct}
\begin{cases}
\dive(\sigma\abs{\nabla u}^{p-2}\nabla u)=0 & \text{ in }\Omega\setminus(\bar D_0\cup\bar D_\infty)\\
u=f & \text{ on }\partial\Omega\\
\sigma\abs{\nabla u}^{p-2}\partial_\nu u=0 & \text{ on }\partial D_0\\
\text{for each component~$C$ of~$D_\infty$}
&
\begin{cases}
u|_C = \text{constant}\\
\int_{\partial C}\sigma\abs{\nabla u}^{p-2}\partial_\nu u=0
.
\end{cases}
\end{cases}
\end{equation}
The constant may be different for different components and the constant values depend on the boundary data~$f$.
\end{remark}

More well-posedness results for the direct problem when $p=2$ can be found in lemmata~\ref{lma:H1-estimates} and~\ref{lma:continuous-constant}.

The next lemma shows that the minimal energy depends monotonically on the conductivity~$\sigma$.
We remind the reader that the functions~$u_\sigma$ and~$u_\gamma$ in the lemma are not unique but the minimal energy is.

\begin{lemma}
\label{lemma:energy_inequality}
Suppose~$\sigma$ and~$\gamma$ are conductivities satisfying the assumptions of theorem~\ref{thm:direct-variation} with $\sigma \leq \gamma$ almost everywhere.
Fix some $f\in W^{1,p}(\Omega)/W^{1,p}_0(\Omega)$ and let~$u_\sigma$ and~$u_\gamma$ solve the $p$-conductivity equation in the sense of theorem~\ref{thm:direct-variation} with the boundary values~$f$ and conductivities~$\sigma$ and~$\gamma$, respectively.
Then
\begin{equation}
\int_\Omega \sigma(x) \abs{\nabla u_\sigma}^p \der x
\leq
\int_\Omega \gamma(x) \abs{\nabla u_\gamma}^p \der x.
\end{equation}
\end{lemma}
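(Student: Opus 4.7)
The plan is to use $u_\gamma$ as a competitor for the energy $E_\sigma$ and compare along the chain
\begin{equation*}
E_\sigma(u_\sigma) \;\leq\; E_\sigma(u_\gamma) \;\leq\; E_\gamma(u_\gamma).
\end{equation*}
First I would note that $\sigma \leq \gamma$ a.e.\ implies $D_\infty^\sigma := \sigma^{-1}(\infty) \subseteq \gamma^{-1}(\infty) =: D_\infty^\gamma$, because $\sigma(x)=\infty$ forces $\gamma(x)=\infty$. In particular, since $u_\gamma$ is a minimizer for $\gamma$, theorem~\ref{thm:direct-variation} gives $\nabla u_\gamma = 0$ a.e.\ on $D_\infty^\gamma$, hence also on the smaller set $D_\infty^\sigma$.

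Next I would verify that $u_\gamma$ is admissible as a competitor in the variational problem defining~$u_\sigma$. It has the correct boundary data by construction ($u_\gamma - f \in W^{1,p}_0(\Omega)$), and its $\sigma$-energy is finite: on $D_\infty^\sigma$ the factor $\nabla u_\gamma$ vanishes (use the convention $0\cdot\infty=0$ as in the proof of theorem~\ref{thm:direct-variation}), and on the complement~$\sigma$ is bounded above by the essentially bounded~$\gamma|_{\Omega\setminus D_\infty^\gamma}$ plus some modification on $D_0^\gamma\setminus D_0^\sigma$ where the integrand is in any case controlled. By the minimizing property of~$u_\sigma$ from theorem~\ref{thm:direct-variation}, we conclude $E_\sigma(u_\sigma) \leq E_\sigma(u_\gamma)$.

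For the second inequality, I would argue pointwise: at a.e.\ $x\in\Omega$ either $\gamma(x)=\infty$ (in which case $\nabla u_\gamma(x)=0$ and both integrands vanish by convention), or $\gamma(x)<\infty$ (in which case $0\leq\sigma(x)\leq\gamma(x)<\infty$, so $\sigma(x)|\nabla u_\gamma(x)|^p \leq \gamma(x)|\nabla u_\gamma(x)|^p$). Integrating gives $E_\sigma(u_\gamma)\leq E_\gamma(u_\gamma)$, which combined with the previous step yields the claim.

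There is really no main obstacle here; the only subtle point is bookkeeping about the sets $D_0$ and $D_\infty$ for the two conductivities and making sure $u_\gamma$ is a legitimate competitor for~$E_\sigma$. Once the inclusion $D_\infty^\sigma\subseteq D_\infty^\gamma$ is observed, everything reduces to the variational characterization and a pointwise comparison.
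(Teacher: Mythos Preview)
Your proof is correct and essentially follows the paper's approach. The paper argues slightly more economically: it simply notes that $E_\sigma(v)\leq E_\gamma(v)$ for \emph{every} $v\in f+W^{1,p}_0(\Omega)$ and then takes infima on both sides, using that the infima are attained at $u_\sigma$ and $u_\gamma$ by theorem~\ref{thm:direct-variation}. This sidesteps the bookkeeping about $D_\infty^\sigma\subseteq D_\infty^\gamma$ and the finiteness of $E_\sigma(u_\gamma)$, since the inequality of infima holds regardless; your version is the same argument unrolled with $u_\gamma$ as the explicit competitor.
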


\begin{proof}
We define $E_\gamma \colon W^{1,p}(\Omega) \to [0,\infty]$ by
\begin{equation}
E_\gamma(v) = \int_\Omega \gamma(x) \abs{\nabla v}^p \der x
\end{equation}
and~$E_\sigma$ similarly.
For every $v \in f+W^{1,p}_0 (\Omega)$ we have $E_\sigma(v) \leq E_\gamma(v)$.
Taking the infimimum over $v\in f+W^{1,p}_0 (\Omega)$ gives $E_\sigma(u_\sigma) \leq E_\gamma(u_\gamma)$ as desired.
The minimizers exist by theorem~\ref{thm:direct-variation}.
\end{proof}

\subsection{The weak Dirichlet-to-Neumann map}
\label{sec:dn-map}

We can now define the weak Dirichlet-to-Neumann map (DN map).
We assume~$\Omega$ and~$\sigma$ to be as in theorems~\ref{thm:direct-variation} and~\ref{thm:direct-pde} above.
The simpler case where $D_0=D_\infty=\emptyset$ was treated in~\cite{Salo:Zhong:2012,Hauer:2015}.
Let $X=W^{1,p}(\Omega)/W^{1,p}_0(\Omega)$ and~$X'$ be its dual.
The DN map $\Lambda_\sigma\colon X\to X'$ is defined by
\begin{equation}
\ip{\Lambda_\sigma f}{g}
=
\int_\Omega\sigma\abs{\nabla\bar f}^{p-2}\nabla\bar f\cdot\nabla\bar g
,
\end{equation}
where $\bar f\in W^{1,p}\sulut{\Omega}$ is any minimizer of the energy functional~$E$ with boundary values $f\in X$ and $\bar g\in W^{1,p}(\Omega)$ is an extension of $g\in X$ with $\nabla \bar g = 0$ in $D_\infty$.
Since $\bar D_\infty\cap\partial\Omega=\emptyset$, there always exists such an extension~$\bar g$, and the existence of a minimizer~$\bar f$ follows from theorem~\ref{thm:direct-variation}.

Under the same assumptions we also have
\begin{equation}
\ip{\Lambda_\sigma f}{g}
=
\int_{\Omega\setminus D_\infty}\sigma\abs{\nabla\bar f}^{p-2}\nabla\bar f\cdot\nabla\bar g
.
\end{equation}

For a sufficiently nice conductivity~$\sigma$ we can use arbitrary extensions $\bar g \in W^{1,p}(\Omega)$; their gradient does not have to vanish in~$D_\infty$.
See remark~\ref{remark:EL-boundary} for more details.
The DN map $\Lambda_\sigma\colon X\to X'$ has an additional term in such cases:
\begin{equation}
\ip{\Lambda_\sigma f}{g}
=
\int_\Omega\sigma\abs{\nabla\bar f}^{p-2}\nabla\bar f\cdot\nabla\bar g
-
\int_{\partial D_\infty}\sigma\abs{\nabla\bar f}^{p-2}(\partial_\nu\bar f)\bar g
,
\end{equation}
where $\bar f\in W^{1,p}_0\sulut{\Omega}$ is any minimizer of the energy functional~$E$ with boundary values $f\in X$ and $\bar g\in W^{1,p}(\Omega)$ is any extension of $g\in X$.

Let us see why the DN map is well-defined.
As pointed out in remark~\ref{rmk:weak-pde-unique}, the minimizer~$\bar f$ is not unique, but $\sigma\abs{\nabla\bar f}^{p-2}\nabla\bar f$ is.
To see that the definition is independent of the choice of~$\bar g$, we need to show that the above expression for $\ip{\Lambda_\sigma f}{g}$ vanishes when $\bar g\in W^{1,p}_0(\Omega)$ and $\nabla \bar g = 0$ in $D_\infty$ or the extra integral over $\doo D_\infty$ is present.
But this is just the claim of theorem~\ref{thm:direct-pde} and remark~\ref{remark:EL-boundary}.

Linearity of~$\Lambda_\sigma f$ as a functional on~$X$ is evident (although $f\mapsto\Lambda_\sigma f$ is linear if and only if $p=2$).
To see that~$\Lambda_\sigma f$ is continuous 
consider the  estimate
\begin{equation}
\begin{split}
\abs{\int_\Omega\sigma\abs{\nabla\bar f}^{p-2}\nabla\bar f\cdot\nabla\bar g}
\leq
C\aabs{\nabla\bar f}_{L^p(\Omega)}^{p-1}\aabs{\bar g}_{W^{1,p}(\Omega)},
\end{split}
\end{equation}
which gives
\begin{equation}
\aabs{\Lambda_\sigma f}_{X'}
\leq
C\aabs{\nabla\bar f}_{L^p(\Omega)}^{p-1}.
\end{equation}
The constant~$C$ depends on~$\sigma$ but not on~$f$.

In the case $p=2$ we also have the following ellipticity estimate for the DN map.

\begin{proposition}
\label{prop:dn-elliptic}
For any $f\in H^{1/2}(\partial\Omega)=W^{1,2}(\Omega)/W^{1,2}_0(\Omega)$, let $\bar f\in W^{1,2}(\Omega)$ be the extension to~$\Omega$ as a solution of the problem of theorems~\ref{thm:direct-variation} and~\ref{thm:direct-pde} with $p=2$, and let $\bar g \in W^{1,2}(\Omega)$ be any extension of $g$ with $\nabla \bar g = 0$ in $D_\infty$.
To make the extension unique, we demand~$\bar f$ to be harmonic in~$D_0$.
Then the weak DN map is given by
\begin{equation}
\label{eq:dn-no-bdy}
\ip{\Lambda_\sigma f}{g}
=
\int_\Omega\sigma\nabla\bar f\cdot\nabla\bar g
=
\int_{\Omega\setminus \sulut{D_0 \cup D_\infty}}\sigma\nabla\bar f\cdot\nabla\bar g
\end{equation}
and satisfies
\begin{equation}
\label{eq:dn-properties}
\ip{\Lambda_\sigma f}{g}
=
\ip{\Lambda_\sigma g}{f}
=
\ip{\Lambda_\sigma g}{f+c}
\end{equation}
for any constant~$c$.
There is a constant $\lambda>0$ so that
\begin{equation}
\label{eq:dn-elliptic}
\inf_{c\in\R}
\lambda\aabs{f+c}_{H^{1/2}(\partial\Omega)}^2
\leq
\ip{\Lambda_\sigma f}{ f}
\leq
\inf_{c\in\R}
\lambda^{-1}\aabs{f+c}_{H^{1/2}(\partial\Omega)}^2.
\end{equation}
\end{proposition}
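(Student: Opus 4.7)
The strategy is to combine the weak formulation of theorem~\ref{thm:direct-pde} and the uniqueness of the gradient from remark~\ref{rmk:weak-pde-unique} with Poincar\'e and trace estimates on the annular region $\Omega\setminus(\bar D_0\cup\bar D_\infty)$, where $\sigma$ is uniformly elliptic.

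The formula~\eqref{eq:dn-no-bdy} is immediate, since $\sigma\equiv0$ on $D_0$ and $\nabla\bar g\equiv0$ on $D_\infty$ together kill the integrand on both exceptional sets. For the symmetry $\ip{\Lambda_\sigma f}{g}=\ip{\Lambda_\sigma g}{f}$, I would take $\bar f$ and $\bar g$ to \emph{both} be the (harmonic-in-$D_0$) minimizers supplied by theorem~\ref{thm:direct-variation}: each is simultaneously an admissible minimizer with the prescribed boundary values and an admissible vanishing-gradient-on-$D_\infty$ extension, so the same integral $\int_\Omega\sigma\nabla\bar f\cdot\nabla\bar g$ computes both pairings, and this bilinear form is manifestly symmetric. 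The constant-shift identity $\ip{\Lambda_\sigma g}{f+c}=\ip{\Lambda_\sigma g}{f}$ then reduces to $\nabla(\bar f+c)=\nabla\bar f$.

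For the upper bound in~\eqref{eq:dn-elliptic}, fix $c\in\R$, pick some $H^1$-extension $v_0$ of $f+c$ with $\|v_0\|_{H^1(\Omega)}\leq C\|f+c\|_{H^{1/2}(\partial\Omega)}$, and multiply by a smooth cutoff $\chi$ equal to $1$ near $\partial\Omega$ and vanishing in a neighbourhood of $\bar D_\infty$ (possible since $\bar D_\infty\cap\partial\Omega=\emptyset$). The competitor $v=\chi v_0$ has boundary values $f+c$ and $\nabla v\equiv0$ on $D_\infty$. Since the minimal energy is unchanged by constant shifts of the boundary data, theorem~\ref{thm:direct-variation} together with the essential upper bound on $\sigma$ outside $D_0\cup D_\infty$ yields $\ip{\Lambda_\sigma f}{f}=E(\bar f)\leq E(v)\leq C'\|f+c\|_{H^{1/2}(\partial\Omega)}^2$, and taking the infimum over $c$ produces the upper half of~\eqref{eq:dn-elliptic}.

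The lower bound is the main obstacle. I would pick $c\in\R$ so that $\bar f+c$ has zero mean on $\Omega\setminus(\bar D_0\cup\bar D_\infty)$; a Poincar\'e--Wirtinger inequality on this Lipschitz region (applied component-wise if it is disconnected, choosing $c$ relative to the component meeting $\partial\Omega$) then gives $\|\bar f+c\|_{H^1(\Omega\setminus(\bar D_0\cup\bar D_\infty))}\leq C\|\nabla\bar f\|_{L^2(\Omega\setminus(\bar D_0\cup\bar D_\infty))}$. Since $\partial\Omega$ is part of $\partial(\Omega\setminus(\bar D_0\cup\bar D_\infty))$, the trace theorem yields
\[
\|f+c\|_{H^{1/2}(\partial\Omega)}\leq C\|\bar f+c\|_{H^1(\Omega\setminus(\bar D_0\cup\bar D_\infty))}\leq C\|\nabla\bar f\|_{L^2(\Omega\setminus(\bar D_0\cup\bar D_\infty))}.
\]
The lower bound $\sigma\geq\lambda_0>0$ outside $D_0\cup D_\infty$ then bounds this by $C\lambda_0^{-1/2}\ip{\Lambda_\sigma f}{f}^{1/2}$; squaring and choosing $\lambda$ small enough delivers the two-sided ellipticity estimate~\eqref{eq:dn-elliptic}.
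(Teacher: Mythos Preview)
Your proof is correct, and for the lower bound it takes a genuinely different route from the paper's. The paper argues the lower bound by contradiction and compactness: assuming a sequence $f_k$ with $\aabs{[f_k]}_{H^{1/2}(\partial\Omega)/V}=1$ and $\ip{\Lambda_\sigma f_k}{f_k}\to0$, it extracts a weak $H^1$ limit of the minimizers $\bar f_k$, uses weak lower semicontinuity of the weighted Dirichlet energy to force $\aabs{\nabla\bar f_k}_{L^2(\Omega\setminus D_0)}\to0$, and then invokes the interior--exterior estimate~\eqref{eq:int-ext-estimate-s0} to upgrade this to $\aabs{\nabla\bar f_k}_{L^2(\Omega)}\to0$, forcing the limit to be constant and contradicting the normalisation. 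Your argument bypasses compactness entirely: Poincar\'e--Wirtinger on the exterior component together with the trace theorem gives the inequality directly, with an explicit (in principle computable) constant.

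What each approach buys: your direct estimate is more elementary and quantitative, and makes transparent that the only analytic input is control of $\bar f$ modulo constants on the annular region where $\sigma$ is uniformly elliptic. The paper's compactness argument is softer and extends with no change to situations where one only has qualitative bounds rather than a clean Poincar\'e inequality; it also dovetails with the machinery already set up for the existence theorem. One small caveat worth noting for both arguments: they require that all of $\partial\Omega$ lies in a single connected component of $\Omega\setminus(\bar D_0\cup\bar D_\infty)$; your parenthetical about ``choosing $c$ relative to the component meeting $\partial\Omega$'' is the right instinct, but if $\partial\Omega$ meets several components (e.g.\ an annular $\Omega$ separated by an annular $D_0$) a single constant shift cannot work, and indeed the lower bound in~\eqref{eq:dn-elliptic} fails there. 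The paper's proof is implicitly making the same assumption when it applies~\eqref{eq:int-ext-estimate-s0} to $\bar f_k\notin W^{1,2}_0(\Omega)$.
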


\begin{proof}
Since~$\bar f$ solves the problem, it is constant in all connected components of~$D_\infty$.
The properties of the DN map given in~\eqref{eq:dn-properties} are easy to see.

Using~\eqref{eq:dn-properties} we observe that $\ip{\Lambda_\sigma f}{f}=\ip{\Lambda_\sigma f+c}{f+c}$ for any $c\in\R$.
If~$V$ denotes the space of constant functions on~$\partial\Omega$, the DN map is a well-defined bilinear functional on the quotient Hilbert space $H^{1/2}(\partial\Omega)/V$.
If the equivalence class of $f\in H^{1/2}(\partial\Omega)$ in~$H^{1/2}(\partial\Omega)$ is denoted by~$[f]$, we have
\begin{equation}
\aabs{[f]}_{H^{1/2}(\partial\Omega)/V}
=
\inf_{c\in\R}
\aabs{f+c}_{H^{1/2}(\partial\Omega)}.
\end{equation}
Continuity of this bilinear map follows from~\eqref{eq:dn-no-bdy}, and this establishes the upper bound in~\eqref{eq:dn-elliptic}.

Suppose the lower bound did not hold.
Then there is a sequence of functions $f_k\in H^{1/2}(\partial\Omega)$ so that $\aabs{[f_k]}_{H^{1/2}(\partial\Omega)/V}=1$ and $\ip{\Lambda_\sigma [f_k]}{[f_k]}\to0$ as $k\to\infty$.
We can shift the functions~$f_k$ without changing their equivalence classes so that $\aabs{[f_k]}_{H^{1/2}(\partial\Omega)/V}=\aabs{f_k}_{H^{1/2}(\partial\Omega)}$ for all~$k$.

Since $\aabs{f_k}_{H^{1/2}(\partial\Omega)}=1$ for all~$k$, the sequence~$(\bar f_k)$ is bounded in~$W^{1,2}(\Omega)$.
Therefore the sequence~$(f_k)_k$ has a subsequence (denoted by~$(f_k)_k$ itself) so that~$\bar f_k$ converges weakly to some function~$\bar f$ in~$W^{1,2}(\Omega)$, $f\in H^{1/2}(\partial\Omega)$.

Since the Dirichlet energy is weakly lower semicontinuous, we have $\ip{\Lambda_\sigma [f_k]}{[f_k]}\to0$.
This implies that $\ip{\Lambda_\sigma [f]}{[f]}=0$ and also that $\aabs{\nabla\bar f_k}_{L^2(\Omega\setminus D_0)}\to0$.
The latter observation combined with~\eqref{eq:int-ext-estimate-s0} shows that in fact $\aabs{\nabla\bar f_k}_{L^2(\Omega)}\to0$.
Therefore~$\bar f_k$ converges to a constant in the strong sense, and~$[f]=0$.
But
\begin{equation}
\aabs{[f]}_{H^{1/2}(\partial\Omega)/V}
=
\lim_{k\to\infty}\aabs{[f_k]}_{H^{1/2}(\partial\Omega)/V}
=
1.
\end{equation}
This contradiction finishes the proof of the lower bound in~\eqref{eq:dn-elliptic}.
\end{proof}

If we have more control on the boundary values, the ellipticity result becomes nicer.
The proof is similar to the previous one, so we give an abridged version.

\begin{proposition}
\label{prop:dn-elliptic2}
Let $\Gamma\subset\partial\Omega$ be open and meet all connected components of~$\Omega$.
For any $f\in H^{1/2}(\partial\Omega)=W^{1,2}(\Omega)/W^{1,2}_0(\Omega)$, let $\bar f\in W^{1,2}(\Omega)$ be as in proposition~\ref{prop:dn-elliptic}.
There is a constant $\lambda>0$ so that
\begin{equation}
\label{eq:dn-elliptic2}
\lambda\aabs{f}_{H^{1/2}(\partial\Omega)}^2
\leq
\ip{\Lambda_\sigma\bar f}{\bar f}
\leq
\lambda^{-1}\aabs{f}_{H^{1/2}(\partial\Omega)}^2
\end{equation}
for all~$f$ that vanish on~$\Gamma$.
\end{proposition}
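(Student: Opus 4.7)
The plan is to follow the contradiction argument from proposition~\ref{prop:dn-elliptic}, but to exploit the vanishing of $f$ on $\Gamma$ in place of quotienting out constants. The upper bound is immediate: by~\eqref{eq:dn-no-bdy} and the Cauchy--Schwarz inequality, $\ip{\Lambda_\sigma f}{f}$ is controlled by $\norm{\nabla\bar f}_{L^2(\Omega)}^2$, and the latter is bounded by $\norm{f}_{H^{1/2}(\partial\Omega)}^2$ by continuity of the solution operator (a consequence of theorem~\ref{thm:direct-variation} plus the interior-to-boundary estimate~\eqref{eq:int-ext-estimate-s0}). So the work is entirely in the lower bound.

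For the lower bound I would argue by contradiction. Suppose there is a sequence $(f_k)\subset H^{1/2}(\partial\Omega)$, each vanishing on $\Gamma$, with $\norm{f_k}_{H^{1/2}(\partial\Omega)}=1$ but $\ip{\Lambda_\sigma f_k}{f_k}\to 0$. Since the extensions $\bar f_k$ are bounded in $W^{1,2}(\Omega)$ (upper bound argument run backwards via the interior extension estimate, together with Poincaré), extract a subsequence with $\bar f_k \rightharpoonup \bar f$ weakly in $W^{1,2}(\Omega)$, strongly in $L^2(\Omega)$ by Rellich, and with traces converging weakly in $H^{1/2}(\partial\Omega)$ to $f=\bar f|_{\partial\Omega}$. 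Since $f_k|_\Gamma=0$ for all $k$, weak convergence gives $f|_\Gamma=0$ as well.

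Next I would show $\bar f\equiv 0$. Weak lower semicontinuity of the weighted Dirichlet energy $v\mapsto \int_{\Omega\setminus(D_0\cup D_\infty)}\sigma\abs{\nabla v}^2$ implies $\ip{\Lambda_\sigma f}{f}=0$, so $\nabla\bar f=0$ almost everywhere on $\Omega\setminus(D_0\cup D_\infty)$; combined with the fact that the extension is harmonic on $D_0$ with boundary data coming from the complement (use~\eqref{eq:int-ext-estimate-s0}) and vanishes on $D_\infty$ up to a gradient, we conclude $\nabla\bar f\equiv 0$ on $\Omega$. Hence $\bar f$ is constant on each connected component of $\Omega$; since $\Gamma$ meets every component and $\bar f|_\Gamma=0$, we get $\bar f\equiv 0$.

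The hard part, and the place where the assumption on $\Gamma$ is essential, is upgrading to strong convergence in $H^{1/2}(\partial\Omega)$ so as to contradict $\norm{f_k}_{H^{1/2}(\partial\Omega)}=1$. I would proceed as follows. The hypothesis $\ip{\Lambda_\sigma f_k}{f_k}\to 0$ together with $\sigma\geq\lambda>0$ on $\Omega\setminus(D_0\cup D_\infty)$ forces $\norm{\nabla\bar f_k}_{L^2(\Omega\setminus D_0)}\to 0$; applying~\eqref{eq:int-ext-estimate-s0} to $\bar f_k$ (which is harmonic in $D_0$ and has the required boundary trace across $\partial D_0$) then yields $\norm{\nabla\bar f_k}_{L^2(\Omega)}\to 0$. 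Since each $\bar f_k$ vanishes on $\Gamma$ in the Sobolev sense, lemma~\ref{lma:poincare-general} applies and gives $\norm{\bar f_k}_{L^2(\Omega)}\to 0$. Thus $\bar f_k\to 0$ strongly in $W^{1,2}(\Omega)$, and continuity of the trace operator yields $f_k\to 0$ in $H^{1/2}(\partial\Omega)$, contradicting $\norm{f_k}_{H^{1/2}(\partial\Omega)}=1$. This completes the lower bound and hence the proposition.
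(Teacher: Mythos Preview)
Your proof is correct and follows essentially the same approach as the paper's: both argue the lower bound by contradiction, extract a weakly convergent subsequence of the extensions, use $\ip{\Lambda_\sigma f_k}{f_k}\to 0$ together with~\eqref{eq:int-ext-estimate-s0} to force $\aabs{\nabla\bar f_k}_{L^2(\Omega)}\to 0$, and then invoke lemma~\ref{lma:poincare-general} (Poincar\'e for functions vanishing on~$\Gamma$) to upgrade to strong $H^1$-convergence and obtain the contradiction via the trace. Your middle paragraph identifying the weak limit $\bar f\equiv 0$ is not actually needed once you have strong convergence $\bar f_k\to 0$ in the final paragraph, but it does no harm.
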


\begin{proof}
If $A=\{u\in W^{1,2}(\Omega);u|_\Gamma=0\}$ (the closure of the space of smooth functions supported away from~$\Gamma$) and~$B$ is the trace space of~$A$, then we only consider boundary values $f\in B$.
Notice that by lemma~\ref{lma:poincare-general} the norms~$\aabs{\cdot}_{H^1(\Omega)}$ and~$\aabs{\nabla\cdot}_{L^2(\Omega)}$ are comparable on~$A$ and that $\bar f\in A$ for any $f\in B$.

The upper bound follows from continuity of the DN map as a bilinear form, so we focus on the lower bound.
If the lower bound does not hold, there is a sequence of functions $f_k\in H^{1/2}(\partial\Omega)$ so that $\aabs{f_k}_{H^{1/2}(\partial\Omega)}$ is bounded away from zero but $\ip{\Lambda_\sigma\bar f_k}{\bar f_k}\to0$.
The sequence~$(\bar f_k)_k$ is bounded in~$W^{1,2}(\Omega)$, so up to a subsequence it converges weakly to a limit $\bar f\in A$.

By weak lower semicontinuity of the Dirichlet energy $\ip{\Lambda_\sigma\bar f_k}{\bar f_k}\to0$.
Using~\eqref{eq:int-ext-estimate-s0} and~\eqref{eq:dn-no-bdy} we conclude that $\aabs{\nabla\bar f_k}_{L^2(\Omega)}\to0$ and thus $\aabs{\bar f_k}_{H^1(\Omega)}\to0$.
Thus $\bar f_k\to0$ strongly and thus $\bar f=0$ but $\aabs{\bar f}_{H^{1/2}(\partial\Omega)}$ cannot be zero.
This is a contradiction.
\end{proof}

\section{Enclosure method for $p$-Calder\'on problem}
\label{sec:p}

For any subset $D \subseteq \R^n$ we define the (convex) support function $h_D \colon \sphere^{n-1} \to \R$ as
\begin{equation}
h_D(\rho) = \sup_{x \in D} x \cdot \rho.
\end{equation}

We will use specific solutions that are oscillating in one direction and have exponential behaviour in a perpendicular direction.
The solutions were first introduced by Wolff~\cite[section 3]{Wolff:2007} (see also~\cite{Lewis:1988}) and later applied to inverse problems by Salo and Zhong~\cite[section 3]{Salo:Zhong:2012}.
\begin{definition}[Wolff solutions] \label{def:wolff}
For directions $\rho,\rho^\perp \in \R^n$, parameters $t \in \R,\tau>0 $, and for points $x \in \R^n$
we define the functions
\begin{equation}
u(x,\tau,t,\rho,\rho^\perp) = \exp\sulut{\tau\sulut{x \cdot \rho - t}} w\sulut{\tau x \cdot \rho^\perp},
\end{equation}
where~$w$ is defined in lemma~\ref{lemma:wolff}.
When $\rho, \rho^{\perp} \in \R^n$ satisfy $\abs{\rho} = \abs{\rho^{\perp}} = 1$ and $\rho \cdot \rho^{\perp} = 0$, we call them the Wolff solutions to the $p$-Laplace equation.
We also write $f = u|_{\doo \Omega}$.
\end{definition}

The solutions are $p$-harmonic:
\begin{lemma}\label{lemma:wolff}
Let $\rho, \rho^{\perp} \in \R^n$ satisfy $\abs{\rho} = \abs{\rho^{\perp}} = 1$ and $\rho \cdot \rho^{\perp} = 0$. Define $h \colon \R^n \to \R$ by $h(x) = e^{-\rho \cdot x}w(\rho^\perp \cdot x)$, where the function~$w$ solves the differential equation
\begin{equation}\label{eq:wolff}
w''(s) + V(w,w')w = 0
\end{equation}
with
\begin{equation}
V(w,w') = \frac{(2p-3)\left(w'\right)^2+(p-1)w^2}{(p-1)\left(w'\right)^2 + w^2},
\end{equation}
The function~$h$ is then $p$-harmonic.

Given any initial conditions $(a_0,b_0) \in \R^2 \setminus \{(0,0)\}$ there exists a solution $w \in C^\infty(\R)$ to the differential equation~\eqref{eq:wolff} which is periodic with period $\lambda_p > 0$, satisfies the initial conditions $(w(0),w'(0)) = (a_0,b_0)$, satisfies $\int_0^{\lambda_p} w(s) \der s = 0$, and furthermore there exist constants~$c$ and~$C$ depending on $a_0,b_0,p$ such that for all $s \in \R$ we have
\begin{equation}\label{eq:wolff_new}
C > w(s)^2+w'(s)^2 > c > 0.
\end{equation}
\end{lemma}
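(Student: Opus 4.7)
The statement naturally splits into the $p$-harmonicity claim and the existence claim for $w$. For the first I would proceed by direct calculation. Writing $s_1=\rho\cdot x$, $s_2=\rho^\perp\cdot x$, one computes $\nabla h=e^{-s_1}(-w\rho+w'\rho^\perp)$, so $\abs{\nabla h}^2=e^{-2s_1}(w^2+(w')^2)$, and the Hessian in the orthonormal basis $\{\rho,\rho^\perp\}$ is obtained from $\doo_{s_1}^2 h=e^{-s_1}w$, $\doo_{s_1}\doo_{s_2} h=-e^{-s_1}w'$, $\doo_{s_2}^2 h=e^{-s_1}w''$. Away from the zero set of $\nabla h$ the $p$-Laplace equation is equivalent to $(p-2)(\nabla^2 h)(\nabla h,\nabla h)+\abs{\nabla h}^2\Delta h=0$, and plugging in and dividing through by $e^{-3s_1}$ collapses to $w''[(p-1)(w')^2+w^2]+w[(p-1)w^2+(2p-3)(w')^2]=0$, i.e.\ precisely $w''+V(w,w')w=0$. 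The lower bound~\eqref{eq:wolff_new}, established below, guarantees that $\nabla h$ never vanishes on any horizontal slice, so this pointwise calculation suffices.

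For the ODE claim the plan is a polar transformation in phase space. Setting $w=R\cos\theta$ and $w'=R\sin\theta$, the crucial point is that $V(R\cos\theta,R\sin\theta)=\frac{(2p-3)\sin^2\theta+(p-1)\cos^2\theta}{(p-1)\sin^2\theta+\cos^2\theta}$ depends only on $\theta$. A few lines of trigonometry reduce the phase-plane system to
\begin{equation*}
\dot\theta=-\frac{p-1}{(p-1)\sin^2\theta+\cos^2\theta},\qquad \frac{\dot R}{R}=\frac{(2-p)\sin\theta\cos\theta}{(p-1)\sin^2\theta+\cos^2\theta}.
\end{equation*}
Since $\dot\theta$ is autonomous, smooth, and bounded away from $0$ and $-\infty$ for $p>1$, the function $\theta(s)$ exists for all $s\in\R$ and sweeps once around the circle in the period
\begin{equation*}
\lambda_p=\int_0^{2\pi}\frac{(p-1)\sin^2\theta+\cos^2\theta}{p-1}\,\der\theta=\frac{\pi p}{p-1}.
\end{equation*}
Dividing the $R$-equation by $\dot\theta$ gives an explicit $\theta$-quadrature $R(\theta)=R_0\exp\bigl(-\tfrac{p-2}{4(p-1)}\cos 2\theta\bigr)$, so $R$ is smooth, $\lambda_p$-periodic, and pinched between two positive constants determined by $(a_0,b_0)\neq(0,0)$. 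These bounds give~\eqref{eq:wolff_new} and, together with standard ODE theory applied with the right-hand side smooth on the punctured plane, the $C^\infty$ regularity and global existence of $w$.

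The mean-zero condition is the closing step. Changing variables $s\mapsto\theta$ and using the explicit formula for $R$ reduces $\int_0^{\lambda_p}w(s)\,\der s=0$ to the vanishing of
\begin{equation*}
\int_0^{2\pi}e^{-\frac{p-2}{4(p-1)}\cos 2\theta}\cos\theta\,\bigl[(p-1)\sin^2\theta+\cos^2\theta\bigr]\,\der\theta,
\end{equation*}
and the substitution $\theta\mapsto\theta+\pi$ fixes both the exponential and the bracket while flipping the sign of $\cos\theta$, so the integrand is odd under this half-period shift. The main technical obstacle is the Hessian computation producing the exact form of $V$; once the ODE is in hand, everything else follows from the autonomy of $\dot\theta$ in polar coordinates.
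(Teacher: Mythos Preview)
Your argument is correct. The paper does not supply its own proof of this lemma but simply refers to~\cite[Lemma~3.1]{Salo:Zhong:2012} and~\cite[Lemma~3.1]{Brander:Kar:Salo:2015}, and your route---reducing $p$-harmonicity to $(p-2)(\nabla^2 h)(\nabla h,\nabla h)+\abs{\nabla h}^2\Delta h=0$ and then analysing the ODE via the polar substitution $w=R\cos\theta$, $w'=R\sin\theta$---is exactly the standard argument found in those references. Your explicit computation of the period $\lambda_p=\pi p/(p-1)$, the closed form for $R(\theta)$, and the $\theta\mapsto\theta+\pi$ symmetry argument for the mean-zero property are all accurate and match the cited proofs.
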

For proof see~\cite[Lemma~3.1]{Salo:Zhong:2012} and~\cite[Lemma~3.1]{Brander:Kar:Salo:2015}.
In particular the gradient of the Wolff solutions is
\begin{equation}
\label{eq:nabla_wolff}
\nabla u = \tau \expo\sulut{\tau(x \cdot \rho - t)}\sulut{\rho w\sulut{\tau x \cdot \rho^\perp} + \rho^\perp w'\sulut{\tau x \cdot \rho^\perp}}.
\end{equation}

In this section we assume that the conductivity $\sigma$ is constant~$1$ outside the possibly empty open Lipschitz sets $D_0 = \sigma^{-1}\sulut{\joukko{0}}$ and $D_\infty = \sigma^{-1}\sulut{\joukko{\infty}}$.
Note that this conductivity function satisfies the assumptions of theorem~\ref{thm:direct-variation}, so the forward problem is well-posed.
For definition of the DN~map~$\Lambda_\sigma$, see section~\ref{sec:dn-map}.
\begin{definition}[Indicator function]\label{def:indicator}
Let~$f$ denote the Wolff solutions defined in definition~\ref{def:wolff}.
Then we define the indicator function~$I$ by
\begin{equation}
I\sulut{t,\tau,\rho,\rho^\perp} = \tau^{n-p}\left\langle \sulut{\Lambda_\sigma - \Lambda_\emptyset }f,f \right\rangle,
\end{equation}
where~$\Lambda_\emptyset$ is the DN map associated with no obstacle and conductivity~$1$.
We use the shorthand notation $(\Lambda_\sigma-\Lambda_\emptyset)f\coloneqq\Lambda_\sigma f-\Lambda_\emptyset f$ although the DN maps are non-linear.
As we keep the directions~$\rho$ and~$\rho^\perp$ fixed, we often omit them from our notation and write the indicator function simply as~$I(t,\tau)$.
\end{definition}
We record the following equality, which follows from the definition of the Wolff solutions (definition~\ref{def:wolff}) and the indicator function (definition~\ref{def:indicator}).
\begin{lemma} \label{lemma:time_varies}
$I(t,\tau) = \expo(2\tau(h_D(\rho)-t))I(h_D(\rho), \tau)$.
\end{lemma}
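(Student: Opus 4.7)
The plan is to observe that the Wolff solution at parameter $t$ differs from the one at $h_D(\rho)$ only by a multiplicative constant, and then exploit the natural scaling (homogeneity) of the DN~map of the $p$-conductivity equation to pass that constant through the indicator.

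First I would use the explicit formula
\begin{equation}
u(x,\tau,t,\rho,\rho^\perp)=\exp(\tau(x\cdot\rho-t))\,w(\tau x\cdot\rho^\perp)
\end{equation}
from definition~\ref{def:wolff} to factor out the $t$-dependence:
\begin{equation}
u(x,\tau,t,\rho,\rho^\perp)=\exp(\tau(h_D(\rho)-t))\,u(x,\tau,h_D(\rho),\rho,\rho^\perp).
\end{equation}
Restricting to $\partial\Omega$ gives $f_t=c\,f_{h_D(\rho)}$ with $c=\exp(\tau(h_D(\rho)-t))>0$.

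Next I would establish the $(p-1)$-homogeneity $\Lambda_\sigma(cf)=c^{p-1}\Lambda_\sigma f$ and similarly for $\Lambda_\emptyset$. This is a direct consequence of theorem~\ref{thm:direct-variation}: if $v$ minimizes the energy $E$ with boundary data $f$, then $cv$ minimizes $E$ with boundary data $cf$, since $E(cv)=c^p E(v)$ for $c>0$. Consequently the unique object $\sigma\lvert\nabla v\rvert^{p-2}\nabla v$ scales by $c^{p-1}$, and the definition of the weak DN~map in section~\ref{sec:dn-map} yields the claimed homogeneity. Pairing once more with $cf$ gives the scaling of the bilinear-type expression:
\begin{equation}
\langle(\Lambda_\sigma-\Lambda_\emptyset)(cf),cf\rangle=c\cdot c^{p-1}\langle(\Lambda_\sigma-\Lambda_\emptyset)f,f\rangle=c^{p}\langle(\Lambda_\sigma-\Lambda_\emptyset)f,f\rangle.
\end{equation}

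Finally, I would multiply by the $\tau^{n-p}$ prefactor in definition~\ref{def:indicator} (which does not depend on $t$) and substitute $c=\exp(\tau(h_D(\rho)-t))$ to arrive at
\begin{equation}
I(t,\tau)=\exp(p\tau(h_D(\rho)-t))\,I(h_D(\rho),\tau),
\end{equation}
which is the stated identity (the exponent being $p\tau$ rather than $2\tau$ in the general $p$-case; for $p=2$ it reduces exactly to the formula already used in section~\ref{sec:p2-enclosure}). There is no real obstacle here: the whole content is the homogeneity of the forward problem under scalar multiplication of boundary data, which is granted by the well-posedness result of theorem~\ref{thm:direct-variation} together with the definition of $\Lambda_\sigma$.
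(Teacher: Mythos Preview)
Your argument is correct and follows exactly the route the paper intends: the identity is an immediate consequence of the definitions of the Wolff solutions and the indicator function, via the $p$-homogeneity $\langle\Lambda_\sigma(cf),cf\rangle=c^{p}\langle\Lambda_\sigma f,f\rangle$. The paper does not give a detailed proof either; it simply asserts that the equality follows from definitions~\ref{def:wolff} and~\ref{def:indicator}.

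You have also spotted a genuine typo in the lemma as stated: the exponent should be $p\tau(h_D(\rho)-t)$ rather than $2\tau(h_D(\rho)-t)$, the latter being a carry-over from the $p=2$ computation in section~\ref{sec:p2-enclosure}. This does not affect any downstream use of the lemma, since in the lower-bound theorem only the existence of some positive constant $C_2$ in $\exp(C_2\tau)$ is needed, but your correction is the right formula for general~$p$.
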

Recall that~$h_D(\rho)$ is the convex support function.

The following lemma is crucial for the proof of the lower bound:
\begin{lemma}\label{lemma:lower_lip}
Suppose $1 < p < \infty$ and $D \subset \Omega$ has Lipschitz boundary.
Then, for sufficiently large $\tau > 0$, we have
\begin{equation}
\int_D \expo\sulut{-p\tau\sulut{h_D(\rho) - x \cdot \rho}} \der x \geq C\tau^{-n}.
\end{equation}
\end{lemma}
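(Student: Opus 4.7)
The plan is to reduce the estimate to a local computation at a support point of $D$ in direction $\rho$, using the Lipschitz graph structure of $\partial D$ to exhibit an explicit ``wedge'' inside $D$ near that point on which the exponent is bounded.

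First I would pick, by compactness, a point $x_0 \in \bar D$ with $x_0 \cdot \rho = h_D(\rho)$. Since $\partial D$ is Lipschitz, there exist a rotation $R$, a radius $r_0 > 0$, and an $L$-Lipschitz function $\varphi$ with $\varphi(0) = 0$ such that in the local coordinates $y = R^{T}(x-x_0)$ the domain has the graph representation
\begin{equation}
D \cap B(x_0, r_0) = x_0 + R\{(y',y_n) : y_n < \varphi(y'),\ \abs{(y',y_n)} < r_0\}.
\end{equation}
Write $R^{T}\rho = (\beta', \beta_n)$. The maximality of $x_0 \cdot \rho$ on $\bar D$ implies $y' \cdot \beta' + y_n \beta_n \leq 0$ whenever $y_n \leq \varphi(y')$. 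Sending $y_n \to -\infty$ (within the local neighborhood) forces $\beta_n \geq 0$, and testing along $y' = \varepsilon \beta'/\abs{\beta'}$ on the graph and using $\abs{\varphi(y')} \leq L\abs{y'}$ forces $\abs{\beta'} \leq L\beta_n$; combined with $\abs{R^{T}\rho} = 1$ this yields $\beta_n \geq (1+L^2)^{-1/2}$.

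Next I would exhibit an explicit interior box: for $\tau$ so large that $\sqrt{1+4L^2}/\tau < r_0$, set
\begin{equation}
W_\tau = \{R(y', y_n) : \abs{y'} \leq 1/\tau,\ -2L/\tau \leq y_n \leq -L/\tau\}.
\end{equation}
For every point in $W_\tau$ we have $y_n \leq -L/\tau \leq -L\abs{y'} \leq \varphi(y')$, hence $x_0 + W_\tau \subset D$. On $W_\tau$,
\begin{equation}
\abs{y \cdot \rho} = \abs{y' \cdot \beta' + y_n \beta_n} \leq \abs{\beta'}/\tau + 2L\beta_n/\tau \leq 3L/\tau,
\end{equation}
so $-p\tau(h_D(\rho) - (x_0+y)\cdot\rho) = p\tau\, y\cdot\rho \geq -3pL$ on $W_\tau$, while $\vol(W_\tau) = \omega_{n-1} L\, \tau^{-n}$.

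Combining these steps,
\begin{equation}
\int_D \expo(-p\tau(h_D(\rho)-x\cdot\rho))\der x \geq \int_{x_0 + W_\tau} e^{-3pL}\der x \geq C\tau^{-n},
\end{equation}
which is the claim. The only step that requires some care is the angular estimate $\beta_n \geq (1+L^2)^{-1/2}$ --- i.e., verifying that the local outward normal direction cannot be too oblique to $\rho$ at a support point; once this is in place the box construction and the exponent bound are immediate.
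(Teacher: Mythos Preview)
Your argument is correct and is essentially the standard proof of this estimate: locate a support point, use the Lipschitz graph to guarantee an interior cone (your ``box'' $W_\tau$) of side $\sim 1/\tau$ on which the exponent stays bounded, and compute the volume. The paper itself does not prove the lemma at all; it simply cites \cite[Lemma~4.7]{Brander:Kar:Salo:2015} (and likewise for the $p=2$ version, Lemma~\ref{estimateCGO:p=2}), so there is no ``paper's own proof'' to compare against beyond that reference, whose argument is of exactly this type.

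Two cosmetic points you should clean up. First, ``sending $y_n\to-\infty$ within the local neighborhood'' is not literally possible; just take $y'=0$ and any $y_n\in(-r_0,0)$ to obtain $y_n\beta_n\le0$, hence $\beta_n\ge0$. Second, your box has height $L/\tau$, which degenerates if the local Lipschitz constant happens to be $0$ (flat boundary piece); since an $L$-Lipschitz function is also $L'$-Lipschitz for any $L'\ge L$, you may harmlessly assume $L\ge1$ from the outset so that $\vol(W_\tau)\sim\tau^{-n}$ with a genuinely positive constant. Neither point affects the substance of the proof.
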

For proof, see~\cite[Lemma~4.7]{Brander:Kar:Salo:2015}.

\begin{lemma} \label{lemma:lower_0}
Suppose $D_\infty = \emptyset$ and~$D_0$ has Lipschitz boundary.
Then
\begin{equation}
\abs{I(h_{D_0}(\rho),\tau)} > C >0
\end{equation}
for sufficiently large~$\tau$.
\end{lemma}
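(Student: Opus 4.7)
The plan is to reduce the lower bound on the indicator function to the volume estimate already established in Lemma~\ref{lemma:lower_lip}, via a one-sided energy comparison that is particularly clean in the purely insulating case ($D_\infty=\emptyset$).

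First I would exploit the variational characterization of the DN map. Using the pairing $\langle \Lambda_\sigma f, f\rangle = \int_\Omega \sigma|\nabla\bar f|^p = E_\sigma(\bar f)$ (take $\bar g=\bar f$ in the defining identity, which is legal since $\nabla\bar g=0$ trivially in $D_\infty=\emptyset$) and $\langle \Lambda_\emptyset f, f\rangle = \int_\Omega |\nabla u_0|^p$, where $u_0$ is the Wolff solution (which is $p$-harmonic on all of $\Omega$), the minimality of $\bar f$ for $E_\sigma$ against the competitor $u_0$ gives
\begin{equation}
\langle\Lambda_\sigma f,f\rangle - \langle\Lambda_\emptyset f,f\rangle
\leq \int_{\Omega\setminus D_0}\abs{\nabla u_0}^p - \int_\Omega\abs{\nabla u_0}^p
= -\int_{D_0}\abs{\nabla u_0}^p.
\end{equation}
In particular $\langle(\Lambda_\sigma-\Lambda_\emptyset)f,f\rangle\leq 0$ and
\begin{equation}
\abs{\langle(\Lambda_\sigma-\Lambda_\emptyset)f,f\rangle}\geq \int_{D_0}\abs{\nabla u_0}^p.
\end{equation}

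Next I would insert the explicit gradient formula~\eqref{eq:nabla_wolff} for the Wolff solution, which together with the positive lower bound~\eqref{eq:wolff_new} for $w^2+(w')^2$ yields the pointwise estimate
\begin{equation}
\abs{\nabla u_0(x)}^p \;\geq\; c\,\tau^p\,\expo\bigl(p\tau(x\cdot\rho - t)\bigr)
\end{equation}
for some $c>0$ depending only on $p$ and the chosen initial data $(a_0,b_0)$ for $w$. Integrating over $D_0$ at the critical time $t=h_{D_0}(\rho)$ and invoking Lemma~\ref{lemma:lower_lip} produces, for sufficiently large $\tau$,
\begin{equation}
\int_{D_0}\abs{\nabla u_0}^p\,\der x \;\geq\; c\tau^p\int_{D_0}\expo\bigl(-p\tau(h_{D_0}(\rho)-x\cdot\rho)\bigr)\der x \;\geq\; C\,\tau^{p-n}.
\end{equation}

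Multiplying by the normalizing factor $\tau^{n-p}$ in the definition of $I$ then gives $\abs{I(h_{D_0}(\rho),\tau)}\geq C$ for large $\tau$, as required. I do not expect a serious obstacle here: the insulating case is precisely where the variational monotonicity runs in our favor, so no analogue of the single-layer potential machinery from Section~\ref{sec:linear} is needed. The only subtle point to verify is that $u_0$ is genuinely admissible as a competitor in the $\sigma$-energy (its boundary trace agrees with $f$ by construction, and $u_0\in W^{1,p}(\Omega)$ with finite $E_\sigma(u_0)$ since $\sigma\leq 1$), which is immediate.
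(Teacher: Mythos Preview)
Your proof is correct and follows essentially the same route as the paper: both establish $\langle(\Lambda_\sigma-\Lambda_\emptyset)f,f\rangle \leq -\int_{D_0}\abs{\nabla u_0}^p$ and then feed in the gradient formula~\eqref{eq:nabla_wolff} together with Lemma~\ref{lemma:lower_lip}. The only cosmetic difference is that you invoke the minimality of $\bar f$ for $E_\sigma$ directly, whereas the paper spells out that same inequality via the Euler--Lagrange identity~\eqref{eq:subt} and the convexity inequality~\eqref{eq:p-norm-convex}.
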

\begin{proof}
We write $D = D_0$.

Note that $\doo D_\infty = \emptyset$ and so the indicator function can be rewritten as
\begin{equation}
I(t,\tau) = \tau^{n-p}\sulut{\int_{\Omega\setminus\overline{D}}\abs{\nabla u_z}^p\der x - \int_{\Omega}\abs{\nabla u}^p\der x},
\end{equation}
where~$u_z$ solves the Zaremba problem (see equation~\eqref{eq:traditional_direct})
\begin{equation} \label{eq:zaremba_0}
\begin{cases}
\dive(\abs{\nabla u_z}^{p-2}\nabla u_z) = 0  & \text{ in } \Omega \setminus \ol D \\
u_z = f & \text{ on } \doo \Omega \\ 
\abs{\nabla u_z}^{p-2} \nabla u_z \cdot \nu = 0 & \text{ on } \doo D
\end{cases}
\end{equation}
and~$u$ are the Wolff solutions.



By theorem~\ref{thm:direct-pde} we get
\begin{equation}\label{eq:subt}
\int_{\Omega\setminus\overline{D}}\abs{\nabla u_z}^{p-2}\nabla u_z \cdot \nabla(u_z-u)\der x = 0,
\end{equation}
since $u|_{\doo \Omega} = u_z|_{\doo \Omega} = f$.
For $1<p<\infty$, we now recall the inequality
\begin{equation}
\label{eq:p-norm-convex}
\abs{\eta}^p \geq \abs{\zeta}^p + p\abs{\zeta}^{p-2}\zeta\cdot(\eta-\zeta),
\end{equation}
for all $\zeta, \eta \in\R^n$.
Replacing~$\eta$ by~$\nabla u$ and~$\zeta$ by~$\nabla u_z$ in the above inequality and then integrating over~$\Omega\setminus\overline{D}$, we obtain
\begin{equation}
\begin{split}
\int_{\Omega\setminus\overline{D}}\abs{\nabla u}^p\der x
&\geq
\int_{\Omega\setminus\overline{D}} \abs{\nabla u_z}^p \der x
\\&\quad
+ p\int_{\Omega\setminus\overline{D}}\abs{\nabla u_z}^{p-2}\nabla u_z\cdot \nabla(u-u_z) \der x.
\end{split}
\end{equation}
Finally, using~\eqref{eq:subt} we have
\begin{equation}
\int_{\Omega\setminus\overline{D}}\abs{\nabla u_z}^p\der x \leq \int_{\Omega\setminus\overline{D}} \abs{\nabla u}^p
\der x.
\end{equation}
Therefore, the estimate for the indicator function becomes
\begin{equation}
\begin{split}
I(t, \tau)
& = \tau^{n-p} \sulut{\int_{\Omega\setminus\overline{D}} \abs{\nabla u_z}^p \der x - \int_{\Omega} \abs{\nabla u}^p\der x} \\
& \leq\tau^{n-p} \sulut{\int_{\Omega\setminus\overline{D}} \abs{\nabla u}^p \der x - \int_{\Omega} \abs{\nabla u}^p\der x} \\
& = -\tau^{n-p}\int_{D} \abs{\nabla u}^p \der x,
\end{split}
\end{equation}
that is, 
\begin{equation} 
-I(t, \tau) \geq \tau^{n-p}\int_{D} \abs{\nabla u}^p \der x.
\end{equation}
Hence, combining the gradient of Wolff functions~\eqref{eq:nabla_wolff} and lemma~\ref{lemma:lower_lip} we obtain at $t=h_D(\rho)$
\begin{equation}
\begin{split}
\abs{I(h_D(\rho), \tau)}
& \geq C \tau^{n-p}\int_D \tau^p e^{-p\tau(h_D(\rho)-x \cdot\rho)} \der x \\
& \geq C \int_D \tau^n e^{-p\tau(h_D(\rho)- x\cdot\rho)}\der x \\
& \geq C >0.
\end{split}
\end{equation}
This estimate concludes the proof.
\end{proof}

\begin{lemma}\label{lemma:lower_infty}
Suppose $D_0 = \emptyset$ and~$D_\infty$ has Lipschitz boundary.
Then
\begin{equation}
\abs{I(h_{D_\infty}(\rho),\tau)} > C >0
\end{equation}
for sufficiently large~$\tau$.
\end{lemma}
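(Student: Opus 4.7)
The plan is to mirror the argument of lemma~\ref{lemma:lower_0}, but with the convexity inequality applied in the \emph{opposite} direction. Since the superconductive inclusion makes the minimal energy \emph{larger} than the free one (by the monotonicity lemma~\ref{lemma:energy_inequality}), the sign of the indicator function is flipped compared to the insulating case, and so is the natural application of convexity.

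First I would rewrite the indicator function in a convenient form. Let $u$ be the Wolff solution of definition~\ref{def:wolff} with boundary data $f$, and let $u_\sigma$ be a minimizer from theorem~\ref{thm:direct-variation} with the same boundary data and conductivity $\sigma$ (which equals $1$ outside $D_\infty$ and $\infty$ inside). Using the definition of the DN map, the fact that $\nabla u_\sigma = 0$ on $D_\infty$, and that $u$ itself is a valid $p$-harmonic extension of $f$ in the free problem, one obtains
\begin{equation}
I(t,\tau) = \tau^{n-p}\left(\int_{\Omega\setminus D_\infty}\abs{\nabla u_\sigma}^p\der x - \int_\Omega\abs{\nabla u}^p\der x\right),
\end{equation}
and by lemma~\ref{lemma:energy_inequality} this quantity is nonnegative, so absolute values may be dropped.

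Next I would apply the pointwise convexity inequality~\eqref{eq:p-norm-convex} with $\eta = \nabla u_\sigma$ and $\zeta = \nabla u$ and integrate over $\Omega\setminus D_\infty$, obtaining
\begin{equation}
\int_{\Omega\setminus D_\infty}\abs{\nabla u_\sigma}^p\der x \geq \int_{\Omega\setminus D_\infty}\abs{\nabla u}^p\der x + p\int_{\Omega\setminus D_\infty}\abs{\nabla u}^{p-2}\nabla u\cdot\nabla(u_\sigma-u)\der x.
\end{equation}
Since $u$ is $p$-harmonic in all of $\Omega$ and $u_\sigma - u \in W^{1,p}_0(\Omega)$ (both functions extend the same Dirichlet data $f$), the weak $p$-Laplace equation for the free solution $u$ gives $\int_\Omega \abs{\nabla u}^{p-2}\nabla u\cdot\nabla(u_\sigma - u)\der x = 0$. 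Splitting this integral across $D_\infty$ and using $\nabla u_\sigma = 0$ on $D_\infty$ yields the key identity
\begin{equation}
\int_{\Omega\setminus D_\infty}\abs{\nabla u}^{p-2}\nabla u\cdot\nabla(u_\sigma-u)\der x = \int_{D_\infty}\abs{\nabla u}^p\der x.
\end{equation}
Combining the last two displays with the expression for $I(t,\tau)$ gives
\begin{equation}
I(t,\tau) \geq (p-1)\tau^{n-p}\int_{D_\infty}\abs{\nabla u}^p\der x.
\end{equation}

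Finally I would insert the explicit gradient formula~\eqref{eq:nabla_wolff} for the Wolff solution together with the uniform lower bound $w^2+(w')^2 \geq c>0$ from lemma~\ref{lemma:wolff}, so that $\abs{\nabla u(x)}^p \geq c\,\tau^p \exp(-p\tau(h_{D_\infty}(\rho)-x\cdot\rho))$ at $t=h_{D_\infty}(\rho)$. Applying lemma~\ref{lemma:lower_lip} with $D=D_\infty$ then produces
\begin{equation}
\abs{I(h_{D_\infty}(\rho),\tau)} \geq (p-1)\tau^{n-p}\cdot c\,\tau^p\cdot C\tau^{-n} = C' > 0
\end{equation}
for $\tau$ large enough, as required. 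The main obstacle I anticipate is the bookkeeping in the middle step: one must carefully justify that $u_\sigma - u$ is an admissible test function in the weak $p$-Laplace equation for the free Wolff solution (it is, simply because $u_\sigma - u \in W^{1,p}_0(\Omega)$), and one must resist the temptation to use the weak Euler--Lagrange equation of $u_\sigma$ itself, which would fail since $\nabla(u_\sigma - u)$ does not vanish on $D_\infty$.
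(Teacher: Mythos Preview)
Your proof is correct and is essentially the paper's own argument: both use the pointwise convexity inequality~\eqref{eq:p-norm-convex} with $\eta=\nabla u_\sigma$ and $\zeta=\nabla u$ on $\Omega\setminus D_\infty$, combine it with the identity $\int_\Omega\abs{\nabla u}^{p-2}\nabla u\cdot\nabla(u_\sigma-u)=0$ coming from $p$-harmonicity of the free Wolff solution together with $\nabla u_\sigma=0$ in $D_\infty$, and reach $I(t,\tau)\geq(p-1)\tau^{n-p}\int_{D_\infty}\abs{\nabla u}^p$ before invoking lemma~\ref{lemma:lower_lip}. The only difference is cosmetic: the paper packages the cross terms into an auxiliary quantity $\mathcal{I}$ and shows $\mathcal{I}\leq 0$, whereas you carry out the same algebra inline.
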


\begin{proof}
We write $D = D_\infty$.
Recall that
\begin{equation}\label{DN:map:p}
\begin{split}
\ip{\Lambda_Df}{g}
&=
\int_{\Omega\setminus\overline{D}}\abs{\nabla u_{\infty}}^{p-2}\nabla u_{\infty}\cdot\nabla\phi\der x
\end{split}
\end{equation}
where~$u_{\infty}$ satisfies (see equation~\eqref{eq:traditional_direct})
\begin{equation}
\begin{cases}
\dive(\abs{\nabla u_{\infty}}^{p-2}\nabla u_{\infty}) =  0  & \text{ in } \Omega \setminus \ol D \\ 
u_{\infty} = \text{constant}  & \text{ in each component of }  \ D \\ 
\int_{\doo D}\abs{\nabla u_{\infty}}^{p-2} \frac{\doo u_{\infty}}{\doo\nu} =  0 &  \\ 
u_{\infty} =  f  & \text{ on } \doo \Omega \\ 
\end{cases}
\end{equation}
and $\phi\in W^{1,p}(\Omega)$ is an extension of $g \in W^{1,p}(\Omega)/W_{0}^{1,p}(\Omega)$
with $\phi|_{\doo\Omega} = g$ and $\nabla \phi = 0$ in $D_\infty$.
Recall that~$u$ is the Wolff solution for the $p$-Laplacian and $u=f$ on~$\doo\Omega$.
Replacing~$\phi$ in~\eqref{DN:map:p} by~$u_{\infty}$ we obtain
\begin{equation}
\begin{split}
\ip{\Lambda_Df}{f}
& = \int_{\Omega\setminus\overline{D}}\abs{\nabla u_{\infty}}^p\der x.
\end{split}
\end{equation}
On the other hand, for the free DN map we write
\begin{equation}\label{DN:free:p} 
\ip{\Lambda_\emptyset f}{g} = \int_{\Omega}\abs{\nabla u}^{p-2}\nabla u\cdot\nabla\phi \;\der x
\end{equation}
where $\phi \in W^{1,p}(\Omega)$ is an extension of~$g$ with $g=\phi|_{\partial\Omega}\in W^{1,p}(\Omega)/W_{0}^{1,p}(\Omega)$.
Since $u = u_{\infty} = f$ on~$\doo\Omega$, by replacing~$\phi$ in~\eqref{DN:free:p} by~$u$ or~$u_{\infty}$ we get
\begin{equation}
\label{eq:free:p}
\begin{split}
\ip{\Lambda_\emptyset f}{f}
& = \int_{\Omega}\abs{\nabla u}^{p-2}\nabla u\cdot\nabla u_{\infty}\der x \\
& = \int_{\Omega}\abs{\nabla u}^p\der x.
\end{split}
\end{equation}
Therefore,
\begin{equation}
\begin{split}
\ip{(\Lambda_D-\Lambda_\emptyset )f}{f}
& = \int_{\Omega\setminus\overline{D}} \abs{\nabla u_{\infty}}^p \der x - \int_{\Omega}\abs{\nabla u}^p \der x \\
& = (p-1)\int_{D}\abs{\nabla u}^p \der x
\\&\quad
- \left[p\int_{D}\abs{\nabla u}^p \der x+ \int_{\Omega\setminus\overline{D}}\abs{\nabla u}^p \der x- \int_{\Omega\setminus\overline{D}}\abs{\nabla u_{\infty}}^p \der x\right] \\
& = (p-1)\int_{D}\abs{\nabla u}^p \der x - \mathcal{I}.
\end{split}
\end{equation}
Now, by using $\nabla u_{\infty} = 0$ in~$D$ and the identity~\eqref{eq:free:p}, we get
\begin{equation}
\begin{split}
\mathcal{I}
& = p\int_{\Omega}\abs{\nabla u}^p \der x - \int_{\Omega\setminus\overline{D}}\abs{\nabla u_{\infty}}^p \der x + \sulut{1-p} \int_{\Omega\setminus\overline{D}}\abs{\nabla u}^p\der x \\
& = p\int_{\Omega}\abs{\nabla u}^{p-2}\nabla u \cdot \nabla u_{\infty} \der x - \int_{\Omega\setminus\overline{D}}\abs{\nabla u_{\infty}}^p \der x - (p-1)\int_{\Omega\setminus\overline{D}}\abs{\nabla u}^p \der x \\
& = p\int_{\Omega\setminus\overline{D}}\abs{\nabla u}^{p-2}\nabla u \cdot \nabla u_{\infty} \der x - \int_{\Omega\setminus\overline{D}}\abs{\nabla u_{\infty}}^p \der x - (p-1)\int_{\Omega\setminus\overline{D}}\abs{\nabla u}^p \der x.
\end{split}
\end{equation}
Using inequality~\eqref{eq:p-norm-convex} for $\eta=\nabla u_{\infty}$ and $\zeta=\nabla u$,
we obtain that $\mathcal{I}\leq 0$, that is,
\begin{equation}
I(t,\tau) = \tau^{n-p}\ip{(\Lambda_D-\Lambda_\emptyset )f}{f} \geq (p-1)\tau^{n-p}\int_{D}\abs{\nabla u}^p \der x.
\end{equation}
By the properties of the Wolff solutions~\eqref{eq:nabla_wolff} we get
\begin{align}
I(h_D(\rho), \tau) \geq C \tau^{n} \int_{D} \exp\sulut{-p\tau\sulut{h_D(\rho)-x\cdot \rho}} \der x.
\end{align}
Applying lemma~\ref{lemma:lower_lip} finishes the proof.
\end{proof}

\begin{theorem}[Lower bound for the indicator function]
\label{thm:p_lower_bound}
When $t < h_D(\rho)$ and either $D_0 = \emptyset$ or $D_\infty = \emptyset$, and the non-empty~$D$ has Lipschitz boundary, then there exist positive constants~$C_1$ and~$C_2$ such that for sufficiently large~$\tau$ we have
\begin{equation}
\abs{I(t,\tau)} \geq C_1\exp\sulut{C_2\tau}.
\end{equation}

Furthermore, the sign of the indicator function depends on the non-empty~$D$; if~$D_0$ is non-empty, then the indicator function is negative, and if~$D_\infty$ is non-empty, then the indicator function is positive.
\end{theorem}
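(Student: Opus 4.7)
The plan is to derive this result essentially as a corollary of the lower bounds already established at $t = h_D(\rho)$, combined with the elementary rescaling identity in lemma~\ref{lemma:time_varies}. The magnitude estimate requires no new analytic work; the only non-trivial content is extracting the sign from the internals of the proofs of lemmas~\ref{lemma:lower_0} and~\ref{lemma:lower_infty}.

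First I would invoke lemma~\ref{lemma:time_varies} to write
\begin{equation}
\abs{I(t,\tau)}
=
\expo\sulut{2\tau(h_D(\rho)-t)}\abs{I(h_D(\rho),\tau)}.
\end{equation}
Since $t < h_D(\rho)$ by assumption, the exponent $2\tau(h_D(\rho)-t)$ is a positive multiple of $\tau$, so the prefactor is at least $\expo(C_2\tau)$ with $C_2 = 2(h_D(\rho)-t) > 0$. Lemma~\ref{lemma:lower_0} (in the case $D_\infty=\emptyset$) or lemma~\ref{lemma:lower_infty} (in the case $D_0=\emptyset$) then supplies a constant $c>0$ with $\abs{I(h_D(\rho),\tau)} \geq c$ for sufficiently large $\tau$. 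Setting $C_1 = c$ gives the stated bound $\abs{I(t,\tau)} \geq C_1\expo(C_2\tau)$.

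For the sign claim I would revisit the two earlier lemmas and observe that their proofs actually produce a \emph{signed} inequality rather than merely a bound on absolute value. In the proof of lemma~\ref{lemma:lower_0} one shows
\begin{equation}
-I(t,\tau) \geq \tau^{n-p}\int_D \abs{\nabla u}^p \der x,
\end{equation}
which at $t = h_D(\rho)$ gives $I(h_D(\rho),\tau) \leq -c < 0$. In the proof of lemma~\ref{lemma:lower_infty} one analogously shows $I(h_D(\rho),\tau) \geq (p-1)\tau^{n-p}\int_D \abs{\nabla u}^p \der x \geq c > 0$. Since the prefactor $\expo(2\tau(h_D(\rho)-t))$ in lemma~\ref{lemma:time_varies} is strictly positive, the sign of $I(t,\tau)$ agrees with that of $I(h_D(\rho),\tau)$: negative when $D_0 \neq \emptyset$ and positive when $D_\infty \neq \emptyset$.

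There is really no substantial obstacle here; the only point requiring a small amount of care is making sure the sign information in lemmas~\ref{lemma:lower_0} and~\ref{lemma:lower_infty} is used, rather than only the absolute-value lower bounds stated in their conclusions. All the analytic heavy lifting---the Wolff-solution construction, the convexity inequality~\eqref{eq:p-norm-convex} and the Lipschitz volume estimate of lemma~\ref{lemma:lower_lip}---has already been carried out in those lemmas, so no new estimates are needed.
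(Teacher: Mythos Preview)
Your argument is correct and, for the magnitude bound, identical to the paper's: both combine lemma~\ref{lemma:time_varies} with the lower bounds of lemmas~\ref{lemma:lower_0} and~\ref{lemma:lower_infty}.

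The one small difference is how the sign is obtained. You extract it from the signed inequalities buried inside the proofs of lemmas~\ref{lemma:lower_0} and~\ref{lemma:lower_infty}; this is legitimate, since those proofs do establish $-I \geq \tau^{n-p}\int_D\abs{\nabla u}^p$ and $I \geq (p-1)\tau^{n-p}\int_D\abs{\nabla u}^p$ respectively. The paper instead appeals directly to the energy monotonicity of lemma~\ref{lemma:energy_inequality}: since $\ip{\Lambda_\sigma f}{f}$ equals the minimal energy with conductivity~$\sigma$, and the conductivity with inclusion~$D_\infty$ dominates (respectively is dominated by) the background conductivity~$1$, one gets $\ip{\Lambda_D f}{f}\geq\ip{\Lambda_\emptyset f}{f}$ (respectively $\leq$) immediately. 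The paper's route is marginally cleaner in that it cites a self-contained lemma rather than asking the reader to revisit earlier proofs, but your version is equally valid and no longer.
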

\begin{proof}
By lemmata~\ref{lemma:lower_infty} and~\ref{lemma:lower_0} the indicator function at time~$h_D$ is bounded from below.
By lemma~\ref{lemma:time_varies} the main part of the theorem holds.

The sign of the indicator function agrees with the sign of
\begin{equation}
\ip{\sulut{\Lambda_D - \Lambda_\emptyset} f}{f}.
\end{equation}
where the $f$ are the Wolff solutions.
That is, we want to prove
\begin{equation}
\ip{\Lambda_D f}{f}
\geq
\ip{\Lambda_\emptyset f}{f}
\end{equation}
when $D=D_\infty$ and the opposite inequality when $D=D_0$.
On the left-hand side, let the extension of~$f$ solve the equation with the inclusion~$D$ and on the right-hand side without it.
The desired estimate then follows from lemma~\ref{lemma:energy_inequality}.
\end{proof}

The enclosure method requires both a lower bound (as provided in theorem~\ref{thm:p_lower_bound}) and an upper bound for the indicator function; see for example the reconstruction procedure in section~\ref{sec:p2-enclosure} and also~\cite[proof of theorem~4.1]{Brander:Harrach:Kar:Salo:2017}.
The lower bound allows us to determine when a half-space intersects the inclusion; or, equivalently, when $t < h_D(\rho)$.
The upper bound would allow us to determine when a half-space does not intersect the inclusion; or, equivalently, when $t > h_D(\rho)$.
Since we only have the lower bound, we only know when we intersect the inclusion, and thus can only find a (convex) superset of the inclusion~$D$.
We have no control over how badly $D'$ in the corollary below overestimates $D$.

\begin{corollary}
\label{cor:general-p}
Suppose that $\Omega \subset \R^n$ is open and bounded with a priori known constant conductivity~$\sigma$ outside an obstacle $D = D_0 \cup D_\infty$.
Suppose $\sigma \colon \Omega \to \R_+ \cup \joukko{0} \cup \joukko{\infty}$ is measurable with $D_0 = \sigma^{-1}\sulut{\joukko{0}}$ and $D_\infty = \sigma^{-1}\sulut{\joukko{\infty}}$.
Suppose either~$D_0$ or~$D_\infty$ is empty, the sets~$\doo \Omega$, $\ol{D_0}$ and~$\ol{D_\infty}$ are pairwise disjoint, the set~$D$ has Lipschitz boundary, and~$\sigma|_{\Omega \setminus \ol{D}}$ is constant.

Then we can find a convex set~$D'$, which is a superset of the convex hull of~$D$.
Furthermore, we can detect whether~$D=\emptyset$ or not.
\end{corollary}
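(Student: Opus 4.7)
The plan is to read off $D'$ and the emptiness test directly from the one-sided asymptotics of the indicator function obtained in the previous theorem. Without loss of generality normalize the known background constant to $1$, so that $\Lambda_\emptyset$ from Definition~\ref{def:indicator} can be computed explicitly from $\Omega$ alone. Then for any Wolff parameters $\rho, \rho^\perp, t, \tau$ the value $I(t,\tau,\rho,\rho^\perp) = \tau^{n-p}\ip{(\Lambda_\sigma-\Lambda_\emptyset)f}{f}$ is computable from the boundary data $\Lambda_\sigma$.

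For each $\rho \in \sphere^{n-1}$ (with an arbitrary choice of $\rho^\perp\perp\rho$ of unit length) I would define
\begin{equation}
h'(\rho) \coloneqq \liminf_{\tau\to\infty} \frac{\log\abs{I(0,\tau,\rho,\rho^\perp)}}{2\tau}.
\end{equation}
By Lemma~\ref{lemma:time_varies} one has $\log\abs{I(0,\tau)} = 2\tau h_D(\rho) + \log\abs{I(h_D(\rho),\tau)}$, and by Lemmas~\ref{lemma:lower_0} and~\ref{lemma:lower_infty} the second term is eventually at least $\log C$ for some positive constant. Dividing by $2\tau$ and taking the liminf yields $h'(\rho) \geq h_D(\rho)$. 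Setting
\begin{equation}
D' \coloneqq \bigcap_{\rho\in\sphere^{n-1}} \joukko{x\in\R^n : x\cdot\rho \leq h'(\rho)}
\end{equation}
produces a convex, DN-computable set containing every $x\in\operatorname{ch}(D)$, since such $x$ satisfies $x\cdot\rho \leq h_D(\rho) \leq h'(\rho)$ for all $\rho$.

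For the dichotomy $D = \emptyset$ versus $D \neq \emptyset$: if $D = \emptyset$ then $\sigma \equiv 1$ everywhere, so $\Lambda_\sigma = \Lambda_\emptyset$ and the indicator function vanishes identically; conversely, if $D \neq \emptyset$ then $h_D(\rho)$ is finite for every $\rho$, and picking any $t < h_D(\rho)$ the previous theorem gives $\abs{I(t,\tau)} \geq C_1 \expo(C_2\tau) > 0$ for large $\tau$, so $\Lambda_\sigma \neq \Lambda_\emptyset$. Thus emptiness is equivalent to the indicator function being identically zero, which is a property directly readable from~$\Lambda_\sigma$. The sign information of the final clause of the theorem additionally tells us which of $D_0$ or $D_\infty$ is non-empty.

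The main obstacle is that the enclosure estimates proven in this section are strictly one-sided: we have a lower bound $\abs{I(t,\tau)} \geq C_1\expo(C_2\tau)$ for $t < h_D(\rho)$, but no matching decay estimate for $t > h_D(\rho)$, and no polynomial upper bound on $\abs{I(h_D(\rho),\tau)}$. Because the Wolff boundary traces are themselves exponentially large in $\tau$, the factor $\log\abs{I(h_D(\rho),\tau)}/(2\tau)$ need not vanish in the limit, and consequently $h'(\rho)$ can strictly exceed $h_D(\rho)$; this is precisely why the conclusion gives only a superset of $\operatorname{ch}(D)$ rather than the convex hull itself, in contrast to the sharper linear result of Proposition~\ref{prop:p2-enclosure}.
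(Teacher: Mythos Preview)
The paper states this corollary without proof, leaving it as an immediate consequence of the preceding lower-bound theorem. Your argument correctly fills in precisely the details the paper leaves implicit --- extracting a computable $h'(\rho)\geq h_D(\rho)$ from Lemmas~\ref{lemma:time_varies}, \ref{lemma:lower_0}, \ref{lemma:lower_infty}, intersecting the corresponding half-spaces to form $D'\supseteq\operatorname{ch}(D)$, and testing emptiness via $\Lambda_\sigma=\Lambda_\emptyset$ --- and your closing paragraph accurately explains why the one-sided estimates yield only a superset rather than the convex hull itself.
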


\bibliographystyle{plain}
\bibliography{math}

\end{document}